\begin{document}

\newtheorem{thm}{Theorem}

\theoremstyle{definition}
\newtheorem{quest}[thm]{Question}
\newtheorem{ex}[thm]{Example}
\newtheorem{lemma}[thm]{Lemma}
\newtheorem{remark}[thm]{Remark}
\newtheorem{cor}[thm]{Corollary}
\newtheorem{defn}[thm]{Definition}
\newcommand{\linnum}{\stepcounter{thm}\tag{\thethm}}

\newcommand{\Chat}{\widehat{\mathbb C}}
\newcommand{\Qhat}{\overline{\mathbb Q}}
\newcommand{\Otilde}{\widetilde{\mathcal{O}}}
\newcommand{\bC}{\mathbb C}
\newcommand{\bH}{\mathbb H}
\newcommand{\bN}{\mathbb N}
\newcommand{\bQ}{\mathbb Q}
\newcommand{\Qbar}{\overline{\mathbb Q}}
\newcommand{\bR}{\mathbb R}
\newcommand{\bZ}{\mathbb Z}
\newcommand{\lcm}{\mbox{lcm}}
\newcommand{\co}{\colon\thinspace}

\newcommand{\cB}{\mathcal{B}}
\newcommand{\cE}{\mathcal{E}}
\newcommand{\cF}{\mathcal{F}}
\newcommand{\cH}{\mathcal{H}}
\newcommand{\cL}{\mathcal{L}}
\newcommand{\cO}{\mathcal{O}}
\newcommand{\cP}{\mathcal{P}}
\newcommand{\cQ}{\mathcal{Q}}
\newcommand{\cR}{\mathcal{R}}
\newcommand{\cS}{\mathcal{S}}
\newcommand{\cT}{\mathcal{T}}
\newcommand{\cU}{\mathcal{U}}
\newcommand{\SR}{S_{\mathcal{R}}}
\newcommand{\SRp}{S_{\mathcal{R}'}}
\newcommand{\tR}{t_{\mathcal{R}}}
\newcommand{\tRp}{t_{\mathcal{R}'}}
\newcommand{\cl}{\overline}
\newcommand{\bdry}{\partial}
\newcommand{\Fix}{\mathop{\textrm Fix}}
\newcommand{\expm}{\varphi}
\newcommand{\subm}{\sigma_{\mathcal{R}}}
\newcommand{\submp}{\sigma_{\mathcal{R}'}}
\newcommand{\tw}{g}
\newcommand{\Star}{\textrm{Star}}
\renewcommand{\star}{\textrm{star}}
\newcommand{\teichm}{\Sigma}
\newcommand{\fslope}{\tau_s}
\newcommand{\Imag}{\mbox{\rm Im}}
\newcommand{\Real}{\mbox{\rm Re}}
\newcommand{\zn}{\nu}
\newcommand{\zt}{\tau}
\newcommand{\zo}{\omicron}
\newcommand{\zp}{\pi}
\newcommand{\zL}{\Lambda}
\newcommand{\zG}{\Gamma}
\newcommand{\zF}{\Phi}
\newcommand{\zl}{\lambda}
\newcommand{\zm}{\mu}
\newcommand{\zg}{\gamma}
\newcommand{\zd}{\delta}
\newcommand{\za}{\alpha}
\newcommand{\zh}{\eta}
\newcommand{\zs}{\sigma}
\newcommand{\zS}{\Sigma}
\newcommand{\zb}{\beta}
\newcommand{\zv}{\varphi}
\newcommand{\ze}{\epsilon}
\newcommand{\zw}{\omega}
\newcommand{\zi}{\iota}
\newcommand{\zj}{\psi}
\newcommand{\zr}{\rho}
\newcommand{\zf}{\phi}
\newcommand{\zq}{\theta}

\newcommand{\sections}{\renewcommand{\thethm}{\thesection.\arabic{thm}}
           \setcounter{thm}{0}}
\newcommand{\subsections}{\setcounter{thm}{0}\renewcommand{\thethm}
           {\thesubsection.\arabic{thm}}}
\newcommand{\nosubsections}{\renewcommand{\thethm}{\thesection.\arabic{thm}}
           \setcounter{thm}{0}}

\title[nearly euclidean Thurston maps]{nearly euclidean Thurston maps}

\author{J. W. Cannon}
\address{Department of Mathematics\\ Brigham Young University\\ Provo, UT
84602\\ U.S.A.}
\email{cannon@math.byu.edu}

\author{W. J. Floyd}
\address{Department of Mathematics\\ Virginia Tech\\
Blacksburg, VA 24061\\ U.S.A.}
\email{floyd@math.vt.edu}
\urladdr{http://www.math.vt.edu/people/floyd}

\author{W. R. Parry}
\address{Department of Mathematics\\ Eastern Michigan University\\
Ypsilanti, MI 48197\\ U.S.A.}
\email{walter.parry@emich.edu}

\author{K. M. Pilgrim}
\address{Department of Mathematics\\ Indiana University\\
Bloomington, IN 47405}
\email{pilgrim@indiana.edu}


\date\today
\maketitle

In this work, we take an in-depth look at Thurston's combinatorial
characterization of rational functions for a particular class of maps
we call \emph{nearly Euclidean Thurston} (NET) maps.  Suppose $f\co
S^2\to S^2$ is an orientation-preserving branched map. Following
Thurston, we define $\nu_f\co S^2 \to \bZ_+ \cup \{\infty\}$ by
$$\nu_f(x) =
\begin{cases}
\lcm(D_f(x)) &\text{if $D_f(x)$ is finite,}\\
\infty &\text{if $D_f(x)$ is infinite,}\\
\end{cases}$$
where $D_f(x) = \{n\in\bZ_+\co$ there exists $m\in\bZ_+$ and $y\in
S^2$ such that $f^{\circ m}(y) = x$ and $f^{\circ m}$ has degree $n$
at $y\}$. The points $x\in S^2$ with $\nu_f(x) > 1$ are called {\em
postcritical points}, and the set of postcritical points is denoted by
$P_f$. The map $f$ is {\em postcritically finite} if $P_f$ is
finite. A Thurston map is an orientation-preserving branched map $f\co
S^2\to S^2$ which is postcritically finite.  In this case, we denote
by $\cT$ the Teichm\"{u}ller space of the orbifold $(S^2,\nu_f)$.  The
map $f$ induces a map $\zS_f\co \cT\to \cT$ by pulling back complex
structures.

In a CBMS Conference in 1983, Thurston \cite{Th} addressed the
problem of determining when a Thurston map $f\co S^2 \to S^2$ is
equivalent to a rational map, where $f\sim g$ if there is a
homeomorphism $h\co S^2 \to S^2$ such that $h(P_f) = P_g$, $(h\circ f)
\big|_{P_f} = (g\circ h) \big|_{P_f}$, and $h\circ f$ is isotopic, rel
$P_f$, to $g\circ h$. His main theorems were 1) that $f$ is equivalent
to a rational map exactly if $\teichm_f$ has a fixed point, and 2) if
$(S^2,\zn_f)$ is hyperbolic, then $\teichm_f$ has a fixed point
exactly if there are no Thurston obstructions (these will be defined
next). Thurston didn't publish his proofs of the theorems, but proofs
were given later by Douady and Hubbard in \cite{DH}.

Now we define Thurston obstructions. A \textit{multicurve} $\Gamma$ is
a finite collection of pairwise disjoint simple closed curves in
$S^2\setminus P_f$ such that each element of $\Gamma$ is nontrivial,
each element of $\Gamma$ is nonperipheral, and distinct elements of
$\Gamma$ are not isotopic. A multicurve $\Gamma$ is \textit{invariant}
or \textit{$f$-stable} if each element of $f^{-1}(\Gamma)$ is either
trivial, peripheral, or isotopic to an element of $\Gamma$. If $\Gamma$
is an invariant multicurve, then the Thurston matrix $A^{\Gamma}\co
\bR^{\Gamma} \to \bR^{\Gamma}$ is defined in coordinates by
$$A_{\gamma\delta}^{\Gamma} = \sum_{\alpha} \frac{1}{\deg(f\co
\alpha\to\delta)},$$ where the sum is taken over connected components
$\alpha$ of $f^{-1}(\delta)$ which are isotopic to $\gamma$ in $S^2
\setminus P_f$. If $\Gamma$ is an invariant multicurve, the spectral
radius (eigenvalue of largest norm) of $A^{\Gamma}$ is called the
Thurston multiplier of $\Gamma$.  An invariant multicurve is a
\textit{Thurston obstruction} if its Thurston multiplier is at least
one.

Unfortunately, checking whether or not Thurston obstructions exist is
very difficult primarily because there are infinitely many multicurves
to consider.  Our motivation in this work is to better understand
Thurston obstructions and the issue of conformality of finite
subdivision rules (for which see \cite{fsr}). We were led to a class
of Thurston maps which are as simple as possible but yet nontrivial in
this regard.  We call these maps nearly Euclidean Thurston maps.
These are simple generalizations of Latt\`{e}s maps.

In \cite{M2} Milnor characterizes a Latt\`{e}s map as a rational map
from the Riemann sphere to itself such that each of its critical
points is simple (local degree 2) and it has exactly four postcritical
points, none of which is also critical.  We say that a Thurston map is
Euclidean if it is a straightforward generalization of this: a
Thurston map is Euclidean if its degree is at least 2, its local
degree at every critical point is 2 and it has at most four
postcritical points, none of which is also critical.
(Lemma~\ref{lemma:arefour} shows that if there are at most four
postcritical points, then there are exactly four.)  A nearly Euclidean
Thurston (NET) map allows postcritical points to be critical: a
Thurston map is nearly Euclidean if its local degree at every critical
point is 2 and it has exactly four postcritical points. (Now at most
four does not imply four, as is the case for the map $z\mapsto z^2$.)
If $f$ is a Euclidean Thurston map, then the orbifold $(S^2,\zn_f)$ is
Euclidean.  On the other hand, if a NET map $f$ is not Euclidean, that
is, some postcritical point is a critical point, then the orbifold
$(S^2,\zn_f)$ is hyperbolic.  These are the simplest Thurston maps
with hyperbolic orbifolds and nontrivial Teichm\"{u}ller spaces.

Our ultimate goal is to thoroughly understand Thurston obstructions
for NET maps.  This paper is devoted to developing the first
properties of these maps.

Section~\ref{sec:defns} presents definitions and basic facts
concerning NET maps.  These basic facts involve lifting properties of
NET maps.  Every NET map lifts to a map from one torus to another.
From such a lift we obtain a lift from $\mathbb{R}^2$ to itself.

Section~\ref{sec:twists} deals with twists of NET maps.  We twist a NET
map by postcomposing it with a suitable homeomorphism.  We find that
every NET map is a twist of a Euclidean Thurston map.

Section~\ref{sec:examples} presents two examples.  The first of these
is our main example.  The finite subdivision rule associated to this
example is the germ of this paper.  Everything in this paper arose
from studying this example.  The second example in
Section~\ref{sec:examples} shows that the rational function which
appears in the proof of statement 2 of Theorem 1.1 of \cite{BEKP} is a
NET map.

Computing Thurston matrices involves degrees and numbers of 
components of pullbacks of invariant multicurves.  These degrees and
numbers of components are described rather completely for NET maps in
Section~\ref{sec:pullbacks}.

Every homotopy class of simple closed curves in a 4-punctured sphere
is assigned a slope in $\widehat{\mathbb{Q}}=\mathbb{Q}\cup
\{\infty\}$ which characterizes the homotopy class.  Taking pullbacks,
a NET map $f$ induces a self-map $\zs_f\co \widehat{\mathbb{Q}}\cup
\{o\}\to \widehat{\mathbb{Q}}\cup \{o\}$, where $o$ denotes the union
of the classes of inessential and peripheral curves. In Section 4, we
give an algorithm (Theorem~\ref{thm:slopefn2} and the following
discussion) for computing $\zs_f$. The slope of a Thurston obstruction
is a fixed point for $\zs_f$.

Section~\ref{sec:horoballs} begins our study of the induced map on
Teichm\"{u}ller space.  Theorem~\ref{thm:halfsp} shows how
knowledge of the pullback of a given curve under $f$ translates into
an interval of slopes in which the slope of a Thurston obstruction
cannot lie.  At the end of Section~\ref{sec:horoballs}, we use this
result to show that there are no Thurston obstructions for the main
example.

Section~\ref{sec:dehn} discusses Dehn twists in the present context.
Section~\ref{sec:rflns} discusses reflections.
Section~\ref{sec:fnleqns} presents a common framework for the results
of the previous two sections.  Taken together, these three sections
allow us to compute explicit ``functional equations'' satisfied by the
map on Teichm\"{u}ller space induced by a NET map.

Section~\ref{sec:constant} shows how the results of the previous
sections can be applied to the study of maps on Teichm\"{u}ller space
induced by NET maps.  Section~\ref{sec:constant} begins the
characterization of those NET maps whose induced maps on
Teichm\"{u}ller space are constant.  Theorem~\ref{thm:algcformn}
reduces this characterization to a purely algebraic problem concerning
finite Abelian groups generated by two elements.  We then obtain
partial results for this algebraic problem.  Saenz Maldonado extends
these results concerning this algebraic problem in his thesis
\cite{SM}, although a complete solution is not yet in hand.

\section{Definitions and lifts }\label{sec:defns}
\nosubsections

A Thurston map is an orientation-preserving branched covering map from
the 2-sphere to itself which is postcritically finite.

\begin{defn}\label{def:et} A Thurston map is \emph{Euclidean} if its
degree is at least 2, its local degree at each of its critical points
is 2, it has at most four postcritical points and none of its
postcritical points is critical.  
\end{defn}

\begin{defn}\label{def:net} A Thurston map is \emph{nearly Euclidean}
(NET) if its local degree at each of its critical points is 2 and it
has exactly four postcritical points.  
\end{defn}

Note that the definition of Euclidean used here is stronger than the
condition of the orbifold being Euclidean.  Although the definition
only requires that a Euclidean Thurston map have at most four
postcritical points, the first statement of the following lemma shows
that it actually has exactly four.  It follows that a Euclidean
Thurston map is characterized by the property that its orbifold is the
$(2,2,2,2)$-orbifold.

\begin{lemma}\label{lemma:arefour}
\begin{enumerate}
  \item Every Euclidean Thurston map has exactly four postcritical
points, and so every Euclidean Thurston map is nearly Euclidean.
  \item Let $f\co S^2\to S^2$ be a NET map with postcritical set
$P_f$.  Then $f^{-1}(P_f)$ contains exactly four points which are not
critical points.  The map $f$ is Euclidean if and only if these four
points are the points of $P_f$.
\end{enumerate}
\end{lemma}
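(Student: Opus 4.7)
The plan is to combine the Riemann--Hurwitz formula with a count of $|f^{-1}(P_f)|$. Write $C_f$ for the set of critical points of $f$ and let $d = \deg f \geq 2$. The hypothesis in both definitions that every critical point has local degree $2$ turns Riemann--Hurwitz into $2 = 2d - |C_f|$, so $|C_f| = 2d - 2$. Directly from the definition of $\nu_f$, one sees that $f(C_f) \subseteq P_f$ and that $P_f$ is forward invariant; hence $P_f \cup C_f \subseteq f^{-1}(P_f)$.

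Next, I would compute $|f^{-1}(P_f)|$ by summing local degrees over each fiber: since non-critical preimages contribute $1$ and each critical preimage contributes $2$, and since $C_f \subseteq f^{-1}(P_f)$,
$$d \cdot |P_f| \;=\; \sum_{y \in f^{-1}(P_f)} \deg_y f \;=\; |f^{-1}(P_f)| + |C_f|,$$
which gives $|f^{-1}(P_f)| = d|P_f| - (2d - 2) = d(|P_f| - 2) + 2$.

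For part (1), the Euclidean hypothesis $P_f \cap C_f = \emptyset$ combined with $P_f, C_f \subseteq f^{-1}(P_f)$ yields $|f^{-1}(P_f)| \geq |P_f| + |C_f|$. Substituting and simplifying gives $(d - 1)|P_f| \geq 4(d - 1)$, hence $|P_f| \geq 4$; combined with the bound $|P_f| \leq 4$ in Definition~\ref{def:et}, this forces $|P_f| = 4$. For part (2), the NET hypothesis $|P_f| = 4$ plugged into the count gives $|f^{-1}(P_f)| = 2d + 2$, so $f^{-1}(P_f) \setminus C_f$ has exactly $2d + 2 - (2d - 2) = 4$ elements. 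The biconditional then follows from a cardinality comparison: since $|P_f| = 4$ and $P_f \subseteq f^{-1}(P_f)$, these four non-critical points of $f^{-1}(P_f)$ coincide with $P_f$ if and only if $P_f \cap C_f = \emptyset$, which is precisely the Euclidean condition. I do not anticipate a serious obstacle; the only bookkeeping subtlety is verifying $f(C_f) \subseteq P_f$ and the forward invariance of $P_f$, both of which are immediate from the definition of $\nu_f$ by inserting one extra iterate.
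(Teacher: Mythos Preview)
Your proof is correct and follows essentially the same approach as the paper: both combine the Riemann--Hurwitz count $|C_f|=2d-2$ with the preimage-with-multiplicity identity $d|P_f|=\sum_{y\in f^{-1}(P_f)}\deg_y f$ to obtain the inequality $(d-1)|P_f|\ge 4(d-1)$ for part~(1) and the exact count $|f^{-1}(P_f)\setminus C_f|=4$ for part~(2). Your write-up is slightly more explicit than the paper's in spelling out the biconditional at the end of part~(2), but the underlying argument is the same.
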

  \begin{proof} To prove statement 1, let $f\co S^2\to S^2$ be a
Euclidean Thurston map with postcritical set $P_f$ and degree $d$.
Every point of $S^2$ has $d$ preimages under $f$ counting
multiplicity.  Hence $f^{-1}(P_f)$ has $d\left|P_f\right|$ points
counting multiplicity.  The Riemann-Hurwitz formula shows that $f$ has
$2d-2$ critical points.  These points map to $P_f$ with multiplicity 2
and they are distinct from the points of $P_f$.  Combining these facts
yields the inequality $4d-4+\left|P_f\right|\le d\left|P_f\right|$.
Hence $(4-\left|P_f\right|)(d-1)\le 0$.  Since $d>1$, we have that
$\left|P_f\right|\ge 4$.  Since $\left|P_f\right|\le 4$ by assumption,
it follows that $\left|P_f\right|=4$.  This proves statement 1 of
Lemma~\ref{lemma:arefour}.

To prove statement 2, we let $f$ now be a NET map and argue as in the
previous paragraph.  If $n$ is the number of points in $f^{-1}(P_f)$
which are not critical, then $4d-4+n=4d$.  Hence $n=4$.

This proves Lemma~\ref{lemma:arefour}.

\end{proof}

A Latt\`{e}s map as in Milnor's paper \cite{M2} (The definition in
\cite{M3} is more general.) is a rational function which is a
Euclidean Thurston map, and so NET maps are closely related to
Latt\`{e}s maps.  An important property of Latt\`{e}s maps is that
they lift to maps of tori in a special way.  The next theorem shows
that NET maps lift to maps of tori in a more general way and, in fact,
this property characterizes NET maps.  The proof uses the fact that
given four points in $S^2$, there exists a double cover (unique up to
isomorphism) of $S^2$ ramified over exactly these four points.

\begin{thm}\label{thm:eqvtdefn}  Let $f\co S^2\to S^2$ be a
Thurston map.  Then $f$ is nearly Euclidean if and only if there exist
branched covering maps $p_1\co T_1\to S^2$ and $p_2\co T_2\to S^2$
with degree 2 from tori $T_1$ and $T_2$ to $S^2$ such that the set of
branch points of $p_2$ is the postcritical set of $f$ and there exists
a continuous map $\widetilde{f}\co T_1\to T_2$ such that $p_2\circ
\widetilde{f}=f\circ p_1$.  If $f$ is nearly Euclidean, then $f$ is
Euclidean if and only if the set of branch points of $p_1$ is the
postcritical set of $f$.
\end{thm}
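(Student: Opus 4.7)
The plan is to prove the biconditional by producing the lift diagram from the NET hypothesis via a monodromy calculation, and conversely to extract the NET conditions from a given diagram using Riemann--Hurwitz and local-degree arithmetic.

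For the forward direction, assume $f$ is NET. I take $p_2\co T_2\to S^2$ to be the double cover branched over $P_f$, which exists and is unique up to isomorphism by the fact cited just before the theorem; Riemann--Hurwitz gives $\chi(T_2)=2\chi(S^2)-4=0$, so $T_2$ is a torus. Let $B\subseteq f^{-1}(P_f)$ be the set of non-critical preimages of $P_f$; by Lemma~\ref{lemma:arefour}(2) we have $|B|=4$, and I let $p_1\co T_1\to S^2$ be the double cover branched over $B$, so $T_1$ is also a torus. To produce $\widetilde{f}$ I apply the covering-space lifting criterion on $S^2\setminus f^{-1}(P_f)$: the cover $p_1$ and the pullback of $p_2$ under $f$ are classified by homomorphisms $\phi_1$ and $\phi_2\circ f_*$ from $\pi_1(S^2\setminus f^{-1}(P_f))$ to $\bZ/2$, and it suffices to check that these agree on a meridian around each $y\in f^{-1}(P_f)$. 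Since $f_*$ sends such a meridian to $\deg_y(f)$ times a meridian around $f(y)$, the composition $\phi_2\circ f_*$ takes the value $\deg_y(f)\pmod 2$ there; this is $1$ exactly when $y$ is non-critical, i.e.\ when $y\in B$, matching $\phi_1$. Hence $\widetilde{f}$ exists on the unbranched parts, and extends continuously across $p_1^{-1}(f^{-1}(P_f))$ because $f\circ p_1$ has local degree $2$ at every such point, matching the local behavior of $p_2$.

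For the converse, given the diagram, Riemann--Hurwitz on $p_2$ forces exactly four branch points, so $|P_f|=4$. Applied to $\widetilde{f}\co T_1\to T_2$, a map between tori, it forces $\widetilde{f}$ to be unramified, so $\deg_x(\widetilde{f})=1$ for every $x$. The commutativity $p_2\circ\widetilde{f}=f\circ p_1$ then yields the local-degree identity
\[
\deg_x(p_1)\cdot\deg_{p_1(x)}(f)=\deg_{\widetilde{f}(x)}(p_2)\in\{1,2\}
\]
for every $x\in T_1$. At any critical point $y$ of $f$ this forces $\deg_y(f)=2$, so $f$ is NET. At any branch point $y$ of $p_1$ the same identity forces $\deg_y(f)=1$ and $\widetilde{f}(x)$ to be a branch point of $p_2$, so $y\in f^{-1}(P_f)\setminus C_f$ (where $C_f$ denotes the critical set of $f$); since Riemann--Hurwitz forces the branch locus of $p_1$ to have cardinality $4$ and $|f^{-1}(P_f)\setminus C_f|=4$, the branch locus equals $f^{-1}(P_f)\setminus C_f$. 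Therefore the branch set of $p_1$ equals $P_f$ exactly when $P_f\cap C_f=\emptyset$, which is the Euclidean condition.

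The step I expect to be the main obstacle is the monodromy verification in the forward direction: one must confirm that $\phi_1$ and $\phi_2\circ f_*$ really coincide as homomorphisms on $\pi_1(S^2\setminus f^{-1}(P_f))$, which amounts to checking the single defining relation that the product of meridians vanishes (easy here, since the total weight is $|B|=4$ on one side and $\deg(f)\cdot|P_f|=4\deg(f)$ on the other, both even), and then that the resulting lift extends continuously across the branch locus of $p_1$ to give a well-defined continuous map of tori.
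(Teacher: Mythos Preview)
Your proof is correct and follows essentially the same strategy as the paper's: both directions use Riemann--Hurwitz together with multiplicativity of local degrees, and the forward direction produces the lift via the standard covering-space lifting criterion by a monodromy check on meridians. The only cosmetic difference is that you compare the two $\bZ/2$-monodromies $\phi_1$ and $\phi_2\circ f_*$ on $\pi_1(S^2\setminus f^{-1}(P_f))$, whereas the paper phrases the same check as the containment of $(f\circ p_1)_*$-image inside $(p_2)_*$-image in $\pi_1(S^2\setminus P_f)$; and you make explicit the deduction that the branch locus of $p_1$ is forced to equal $f^{-1}(P_f)\setminus C_f$, which the paper leaves as ``now clear.''
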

  \begin{proof} We begin by proving the backward implication of the
first assertion.  Let $p_1$, $p_2$ and $\widetilde{f}$ be maps as
stated.  It follows that $\widetilde{f}$ is a branched covering map.
Two applications of the Riemann-Hurwitz formula show that
$\widetilde{f}$ is unramified and that $p_1$ and $p_2$ are both
ramified at exactly four points.  Hence the postcritical set of $f$
has exactly four points.  Now we combine the equation $p_2\circ
\widetilde{f}=f\circ p_1$ with the facts that local degrees multiply
under composition of functions, that the local degree of
$\widetilde{f}$ at every point is 1 and that the local degree of $p_2$
at every point is either 1 or 2.  We conclude that the local degree of
$f$ at every point is either 1 or 2.  In other words, the local degree
of $f$ at each of its critical points is 2.  This proves the backward
implication of the first assertion.

To prove the forward implication of the first assertion, suppose that
$f\co S^2\to S^2$ is a NET map with postcritical set $P_2$.  Statement
2 of Lemma~\ref{lemma:arefour} implies that four points of
$f^{-1}(P_2)$ are not critical points of $f$.  Let $P_1$ be this set
of four points.  Now let $T_1$ and $T_2$ be tori, and let $p_1\co
T_1\to S^2$ and $p_2\co T_2\to S^2$ be branched covering maps with
degree 2 such that the set of branch points of $p_1$ is $P_1$ and the
set of branch points of $p_2$ is $P_2$.  Then the restriction of $p_2$
to $T_2\setminus p_2^{-1}(P_2)$ is a covering map to $S^2\setminus
P_2$, and the restriction of $f\circ p_1$ to $T_1\setminus
p_1^{-1}(f^{-1}(P_2))$ is a continuous map to $S^2\setminus P_2$.  The
fundamental group $\zp_1(S^2\setminus P_2)$ is generated by the
homotopy classes of four loops about the elements of $P_2$.  There
exists a group homomorphism from $\zp_1(S^2\setminus P_2)$ to $\bZ/2
\bZ$ which sends these homotopy classes to the nontrivial element of
$\bZ/2 \bZ$.  The kernel of this group homomorphism is the image of
$\zp_1(T_2\setminus p_2^{-1}(P_2))$ in $\zp_1(S^2\setminus P_2)$.  Now
we see that because the elements of $P_1$ are branch points of $p_1$
and the remaining elements of $f^{-1}(P_2)$ are critical points of
$f$, the image of $\zp_1(T_1\setminus p_1^{-1}(f^{-1}(P_2)))$ in
$\zp_1(S^2\setminus P_2)$ is contained in the image of
$\zp_1(T_2\setminus p_2^{-1}(P_2))$.  The standard lifting theorem
from covering space theory now implies that there exists a lift from
$T_1\setminus p_1^{-1}(f^{-1}(P_2))$ to $T_2\setminus p_2^{-1}(P_2)$,
and this lift extends to a lift $\widetilde{f}\co T_1\to T_2$ such
that $p_2\circ \widetilde{f}=f\circ p_1$.  This proves the forward
implication of the first assertion.

The second assertion concerning Euclidean Thurston maps is now clear.

This proves Theorem~\ref{thm:eqvtdefn}.

\end{proof}

We continue this section with a discussion of NET maps.  Let $f\co
S^2\to S^2$ be a NET map.  Let $p_1\co T_1\to S^2$ and $p_2\co T_2\to
S^2$ be covering maps with degree 2 from tori $T_1$ and $T_2$ to $S^2$
and let $\widetilde{f}\co T_1\to T_2$ be a continuous map as in
Theorem~\ref{thm:eqvtdefn} such that $p_2\circ \widetilde{f}=f\circ
p_1$.  Let $P_j$ be the set of branch points of $p_j$ in $S^2$ for
$j\in \{1,2\}$.  We sometimes use the notation $P_j(f)$ instead of
$P_j$ to avoid possible confusion when dealing with more than one NET
map. The set $P_2$ is the postcritical set of $f$.

Let $j\in \{1,2\}$.  Let $q_j\co\bR^2\to T_j$ be a universal covering
map.  The map $p_j\circ q_j\co \bR^2 \to S^2$ is a branched covering
map whose local degree at every ramified point is 2.  Let
$\zL_j\subseteq \bR^2$ be the set of these ramification points.  It is
furthermore true that $p_j\circ q_j$ is regular.  Let $\zG_j$ be its
group of deck transformations.  By choosing $q_j$ appropriately, we
may assume that $\zG_j$ is generated by the set of all Euclidean
rotations of order 2 about the points of $\zL_j$.  Given rotations
$x\mapsto 2 \zl-x$ and $x\mapsto 2 \zm-x$ of order 2 about the points
$\zl,\zm\in \zL_j$, their composition, the second followed by the
first, is the translation $x\mapsto x+2(\zl-\zm)$.  We may, and do,
normalize so that $0\in \zL_j$.  It follows that $\zL_j$ is a lattice
in $\bR^2$ and that the elements of $\zG_j$ are the maps of the form
$x\mapsto 2 \zl\pm x$ for some $\zl\in \zL_j$.

The map $\widetilde{f}$ lifts to a continuous map
$\widetilde{\widetilde{\text{$f$}}}\co \bR^2\to \bR^2$ such that
$q_2\circ \widetilde{\widetilde{\text{$f$}}}=\widetilde{f}\circ q_1$.
Since $\widetilde{f}$ is a covering map, so is
$\widetilde{\widetilde{\text{$f$}}}$.  Hence
$\widetilde{\widetilde{\text{$f$}}}$ is a homeomorphism.  We replace
$q_1$ by $q_1\circ \widetilde{\widetilde{\text{$f$}}}^{-1}$.  As a
result, $\widetilde{f}$ lifts to the identity map.  Because
$\widetilde{f}$ lifts to the identity map, $\zL_1\subseteq \zL_2$ and
$\zG_1\subseteq \zG_2$.  We obtain the standard commutative diagram in
Figure~\ref{fig:cmmvdgm}, where the map from $\bR^2$ to itself is the
identity map and the maps from $\zL_1$ and $\zL_2$ are inclusion maps.

\begin{figure}
  \begin{equation*}
\begin{CD}
\zL_1 @>incl>> \zL_2 \\
@VinclVV  @VinclVV \\
\bR^2 @>id>>\bR^2 \\
@Vq_1VV  @Vq_2VV \\
T_1 @>\widetilde{f}>> T_2 \\
@Vp_1VV @Vp_2VV \\
S^2 @>f>> S^2
\end{CD}
  \end{equation*}
 \caption{ The standard commutative diagram.}
\label{fig:cmmvdgm}
\end{figure}

The group $\zG_j$ contains the group of deck transformations of $q_j$.
It is the subgroup with index 2 consisting of translations of the form
$x\mapsto 2 \zl+x$ with $\zl\in \zL_j$.  Thus we identify $T_j$ with
$\bR^2/2\zL_j$.  The standard commutative diagram implies that
$\mathbb{R}^2/\zG_1$ and $\mathbb{R}^2/\zG_2$ are both identified with
$S^2$.  Thus there is an \emph{identification map} $\zf\co
\mathbb{R}^2/\zG_2\to \mathbb{R}^2/\zG_1$.  To be precise, to evaluate
$f$ at some point $x$, we view $x$ as an element of
$\mathbb{R}^2/\zG_1$.  We lift it to $\mathbb{R}^2$, then project it
to $\mathbb{R}^2/\zG_2$ and then apply the identification map $\zf$ to
obtain $f(x)$.  For Euclidean NET maps, we usually construct this
identification map using an affine automorphism of $\mathbb{R}^2$
which restricts to an affine isomorphism from $\zL_2$ to $\zL_1$.

In this paragraph we discuss how the identification map $\zf$ might
arise from an affine isomorphism $\zF$.  Let $\zF\co \mathbb{R}^2\to
\mathbb{R}^2$ be an affine isomorphism such that $\zF(\zL_2)=\zL_1$.
Every rotation in $\zG_2$ has the form $x\mapsto 2 \zl-x$ for some
$\zl\in \zL_2$.  Suppose that $\zF(x)=Ax+B$, where $A\in
\text{GL}(2,\mathbb{R})$ and $B\in \mathbb{R}^2$.  Let $\zl\in
\zL_2$.  Then
  \begin{equation*}
\zF(2\zl-x)=A(2\zl-x)+B=2(A \zl+B)-(Ax+B)=2\zF(\zl)-\zF(x).
  \end{equation*}
Because $\zF(\zL_2)=\zL_1$, this implies that $\zF$ induces a map
$\zf\co \mathbb{R}^2/\zG_2\to \mathbb{R}^2/\zG_1$, and one checks that
it is a homeomorphism.  In this way we obtain an identification map
$\zf$.

Because $\widetilde{f}$ lifts to the identity map, when we interpret
in terms of group theory, we see that $\widetilde{f}$ is the canonical
group homomorphism from $\bR^2/2\zL_1$ to $\bR^2/2\zL_2$.  Its kernel
is $2\zL_2/2\zL_1\cong \zL_2/\zL_1$.  Thus
$\deg(f)=\deg(\widetilde{f})=\left|\zL_2/\zL_1\right|$.

Let $i\in \{1,2\}$.  We have that $P_i$ is the set of branch points of
$p_i\circ q_i$, that $\zL_i$ is the set of ramification points of
$p_i\circ q_i$ and that $q_i^{-1}(p_i^{-1}(P_i))=\zL_i$.  In this
paragraph we show that $q_1^{-1}(p_1^{-1}(P_2))\subseteq \zL_2$.  For
this, let $x\in P_2$.  Let $y\in p_1^{-1}(x)$.  Then
$p_2(\widetilde{f}(y))=f(p_1(y))=f (x)\in P_2$.  So $\widetilde{f}(y)$
is one of the four points of $T_2$ at which $p_2$ is ramified.  We
conclude that $p_1^{-1}(P_2)$ is contained in the set of $4 \deg(f)$
points of $T_1$ which $\widetilde{f}$ maps to a ramification point of
$p_2$.  This implies that $q_1^{-1}(p_1^{-1}(P_2))\subseteq \zL_2$.
More precisely, if $P_2$ contains $m$ elements in $P_1$ and $n$
elements not in $P_1$, so that $m+n=4$, then $q_1^{-1}(p_1^{-1}(P_2))$
consists of $m+2n$ cosets of $2 \zL_1$ in $\zL_2$.

\section{Twists}\label{sec:twists}\nosubsections

In this section we consider ``twists'' of NET maps.  That is, we
consider how to obtain new NET maps from known ones by postcomposing
with homeomorphisms.  Suppose that $g\co S^2\to S^2$ is a NET map.
Also suppose that $h\co S^2\to S^2$ is an orientation-preserving
homeomorphism such that $h(P_g)\subseteq g^{-1}(P_g)$.  Then the map
$f=h\circ g$ is a NET map if it has at least four postcritical points.
Indeed, it is an orientation-preserving branched map, the local degree
at each of its critical points is 2, and its set of postcritical
points is contained in $h(P_g)$, a set with four elements.  If $f$ is
a NET map, then in the usual notation, $P_1(f)=P_1(g)=P_2(g)$ and
$P_2(f)=h(P_2(g))$.

We continue the discussion of the previous paragraph by considering
conditions under which the map $f=h\circ g$ has at least four
postcritical points.  We begin with the observation that if every
element of $P_g$ is the image under $g$ of a critical point of $g$,
then every element of $h(P_g)$ is the image under $f$ of a critical
point of $f$, and so $f$ has at least four postcritical points.
Statement 2 of Lemma~\ref{lemma:arefour} shows that $g^{-1}(P_g)$
contains exactly four points which are not critical points of $g$.  So
if some point $x$ of $P_g$ is not the image under $g$ of a critical
point of $g$, then $g^{-1}(x)$ contains at most four points and so
$\deg(g)\le 4$.  We conclude that if $\deg(g)\ge 5$, then $f$ has at
least four postcritical points.  If $\deg(g)=3$, then the preimage
under $g$ of every element of $P_g$ contains three points counting
multiplicity.  There cannot be two critical points in such a preimage
because then the preimage would have at least four points counting
multiplicity.  It easily follows that every such preimage contains one
point which is critical and one point which is not.  So if
$\deg(g)=3$, then $f$ has at least four postcritical points.  Thus if
either $\deg(g)=3$ or $\deg(d)\ge 5$, then $f=h\circ g$ has at least
four postcritical points.  This may fail if either $\deg(g)=2$ or
$\deg(g)=4$.

In this paragraph we consider the converse to the discussion of the
previous two paragraphs.  Let $f$ be a NET map with $P_1=P_1(f)$ and
$P_2=P_2(f)$ as usual.  Let $h\co S^2\to S^2$ be any
orientation-preserving homeomorphism which maps $P_1$ to $P_2$.  Let
$g=h^{-1}\circ f$.  Then $\deg(g)=\deg(f)\ge 2$, the local degree of
$g$ at each of its critical points is 2, and the postcritical points
of $g$ are contained in $h^{-1}(P_2)=P_1$, a set with four elements
containing no critical points of $g$.  This means that $g$ is a
Euclidean Thurston map.

We have proved the following theorem.

\begin{thm}\label{thm:cmpon} \begin{enumerate}
  \item If $g\co S^2\to S^2$ is a NET map and $h\co S^2\to S^2$ is an
orientation-preserving homeomorphism such that $h(P_g)\subseteq
g^{-1}(P_g)$, then $f=h\circ g$ is a NET map if it has at least four
postcritical points.
  \item Let $f$ be a NET map with $P_1=P_1(f)$ and $P_2=P_2(f)$ as
usual.  Let $h\co S^2\to S^2$ be any orientation-preserving
homeomorphism with $h(P_1)=P_2$.  Then $f=h\circ g$, where $g\co
S^2\to S^2$ is a Euclidean Thurston map with $P_g=P_1$ and
$P_2\subseteq g^{-1}(P_g)$, so that $h(P_g)\subseteq g^{-1}(P_g)$.
\end{enumerate}
\end{thm}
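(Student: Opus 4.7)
The plan is to verify each item in the NET and Euclidean definitions directly; the two statements are formal converses, and since $h$ and $h^{-1}$ are homeomorphisms, the critical set and local degrees are preserved under pre- or post-composition. The only subtle issue is controlling the postcritical set.

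For statement 1, I would first note that $f=h\circ g$ is an orientation-preserving branched map whose critical points coincide with those of $g$, each with local degree $2$. It remains to bound $|P_f|$ from above by $4$. I would show $P_f\subseteq h(P_g)$ by checking that $h(P_g)$ is forward invariant under $f$ and contains all critical values of $f$. The critical values of $f$ are $h(g(\mathrm{crit}(g)))\subseteq h(P_g)$. For forward invariance, I would compute
\begin{equation*}
f(h(P_g))=h(g(h(P_g)))\subseteq h(g(g^{-1}(P_g)))\subseteq h(P_g),
\end{equation*}
using the hypothesis $h(P_g)\subseteq g^{-1}(P_g)$. Hence $P_f\subseteq h(P_g)$, which has four elements, so the assumption $|P_f|\ge 4$ forces $|P_f|=4$.

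For statement 2, I would set $g=h^{-1}\circ f$ and verify the conditions of Definition~\ref{def:et}. Since $h^{-1}$ is a homeomorphism, $g$ is an orientation-preserving branched map with the same critical points and local degrees as $f$; in particular, $\deg(g)=\deg(f)\ge 2$ and the local degree at each critical point of $g$ is $2$. The critical values of $g$ are $h^{-1}(f(\mathrm{crit}(f)))\subseteq h^{-1}(P_2)=P_1$. To see that $P_g\subseteq P_1$, I would check forward invariance of $P_1$ under $g$: using the fact from the definition of $P_1$ that $f(P_1)\subseteq P_2$, I get
\begin{equation*}
g(P_1)=h^{-1}(f(P_1))\subseteq h^{-1}(P_2)=P_1.
\end{equation*}
Thus $P_g$ is contained in $P_1$, a set of four points containing no critical points of $f$ (and hence none of $g$). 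This means $g$ satisfies the definition of a Euclidean Thurston map. By statement~1 of Lemma~\ref{lemma:arefour}, $P_g=P_1$. Finally, I would verify $P_2\subseteq g^{-1}(P_g)$: since $P_2$ is forward invariant under $f$,
\begin{equation*}
g(P_2)=h^{-1}(f(P_2))\subseteq h^{-1}(P_2)=P_1=P_g,
\end{equation*}
giving the desired inclusion, and consequently $h(P_g)=h(P_1)=P_2\subseteq g^{-1}(P_g)$.

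No step is genuinely hard; the main point to be careful about is the bookkeeping of what $P_1$ and $P_2$ mean for $f$ versus $g$, since the roles swap under the substitution $g=h^{-1}\circ f$. The mild obstacle in statement 1 is that the bound $|P_f|\le 4$ is not automatic: one needs the forward invariance computation above, which uses the inclusion $h(P_g)\subseteq g^{-1}(P_g)$ essentially.
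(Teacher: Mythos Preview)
Your proposal is correct and follows essentially the same approach as the paper, which proves the theorem in the discussion preceding its statement. The paper is terser---it simply asserts that the postcritical set of $f$ is contained in $h(P_g)$ and that the postcritical set of $g$ is contained in $h^{-1}(P_2)=P_1$---whereas you spell out the forward-invariance computations and the appeal to Lemma~\ref{lemma:arefour} to conclude $P_g=P_1$; this extra detail is appropriate and the arguments match.
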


\section{Construction of examples }\label{sec:examples}
\nosubsections

Let $g$ be a NET map in the setting of Section~\ref{sec:defns}.  Let
$h\co S^2\to S^2$ be an orientation-preserving homeomorphism such that
$h(P_g)\subseteq g^{-1}(P_g)$.  Statement 1 of Theorem~\ref{thm:cmpon}
implies that the map $f=h\circ g$ is a NET map if it has at least four
postcritical points.  Also suppose that $g$ is the subdivision map of
a finite subdivision rule $\cQ$ and that $h$ maps the 1-skeleton of
$S^2$ into the 1-skeleton of its first subdivision $\cQ(S^2)$, taking
vertices of $S^2$ to vertices of $\cQ(S^2)$.  Then $f$ is the
subdivision map of a finite subdivision rule $\cR$.  The subdivision
complex of $\cR$ is $S^2$ with cell structure the image under $h$ of
the original cell structure.  Even though the subdivision complexes of
$\cQ$ and $\cR$ are probably different, their first subdivisions are
identical.  As noted in the introduction, if $g$ is Euclidean and if
$h$ does not stabilize the postcritical set of $g$, then the orbifold
structure of $S^2$ for $f$ is hyperbolic.  These observations allow us
to easily construct finite subdivision rules whose subdivision maps
are NET maps whose orbifolds are hyperbolic.

\begin{ex}\label{ex:main} This takes us to our main example.  It will
be a NET map of the form $f=h\circ g$, where $g$ is Euclidean. In the
process of defining $g$ and $f$, we will show that each inherits the
additional structure of being a subdivision map for a finite
subdivision rule.  To define $f$ we first define $g$ and then we
define $h$.

We begin the definition of $g$ by setting $\zL_2=\bZ^2$.  Let $\cS_2$
be the tiling of the plane by $2\times 1$ rectangles so that
four rectangles meet at every lattice point $(x,y)$ for which $x$ is
even as in Figure~\ref{fig:maintess}.  Since every such rectangle
contains six elements of $\zL_2$, every such rectangle should be
viewed as a hexagon rather than a quadrilateral.  Recall that $\zG_i$
is the group generated by 180 degree rotations about the lattice
points of $\zL_i$ for $i\in \{1,2\}$.  Every such rectangle is a
fundamental domain for the action of $\zG_2$ on $\bR^2$.  Let $F_2$ be
the rectangle which has as corners $(0,0)$, $(2,0)$ and $(0,1)$.

Let $\zL_1=\left<(2,-1),(0,5)\right>$, the sublattice of $\zL_2$
generated by $(2,-1)$ and $(0,5)$.  A fundamental domain $F_1$ for the
action of $\zG_1$ on $\bR^2$ is hatched in Figure~\ref{fig:maintess}.
We give $F_1$ a cell structure so that the boundary of $F_1$ is its
1-skeleton and its vertices are at $(0,0)$, $(2,-1)$, $(4,-2)$,
$(4,3)$, $(2,4)$ and $(0,5)$.  We regard the hatched region in
Figure~\ref{fig:maintess} as a subdivision of $F_1$.  Let $\cS_1$ be
the tiling of the plane by the images of $F_1$ under the elements of
$\zG_1$.

We next construct an identification map $\zf\co \mathbb{R}^2/\zG_2\to
\mathbb{R}^2/\zG_1$ for $g$.  Because we wish to preserve cell structure,
instead of directly using an affine automorphism of $\mathbb{R}^2$ as
in Section~\ref{sec:defns}, we proceed as follows.

Let $j\in \{1,2\}$.  Let $T_j=\bR^2/2\zL_j$, and let $q_j\co \bR^2\to
T_j$ be the canonical quotient map.  Let $p_j\co T_j\to \bR^2/\zG_j$
be the canonical quotient map.  The tiling $\cS_j$ induces a tiling of
$\bR^2/\zG_j$ with one tile.  Because $F_2$ is cellularly homeomorphic
to $F_1$ in a way which respects the edge pairings induced by $\zG_2$
and $\zG_1$, there exists an orientation-preserving cellular
homeomorphism $\zf\co \bR^2/\zG_2\to \bR^2/\zG_1$ which maps
$p_2(q_2(0,0))$ to $p_1(q_1(0,0))$.  Such a homeomorphism $\zf$ can be
constructed as follows.  Define $\zf$ to map the four points
$p_2(q_2(1,0))$, $p_2(q_2(0,0))$, $p_2(q_2(0,1))$ and $p_2(q_2(1,1))$
to the four points $p_1(q_1(2,-1))$, $p_1(q_1(0,0))$, $p_1(q_1(0,5))$
and $p_1(q_1(2,4))$ in order.  The image of $\partial F_2$ in
$\mathbb{R}^2/\zG_2$ is an arc joining the first four points in order,
and the image of $\partial F_1$ in $\mathbb{R}^2/\zG_1$ is an arc
joining the second four points in order.  We extend $\zf$ to a
homeomorphism from the first arc to the second arc.  Finally, we
extend this map to an orientation-preserving homeomorphism $\zf\co
\mathbb{R}^2/\zG_2\to \mathbb{R}^2/\zG_1$.  We use this homeomorphism
to identify these two spaces and we identify the result with $S^2$.
With this identification, the set of branch points of $p_1$ equals the
set of branch points of $p_2$.  This identification map is isotopic to
the one induced by the linear automorphism of $\mathbb{R}^2$ whose
matrix with respect to the standard basis is
$\left[\begin{smallmatrix} 2& 0 \\ -1 & 5 \end{smallmatrix}\right]$.

Let $\widetilde{g}\co T_1\to T_2$ be the canonical map, and let $g\co
S^2\to S^2$ be the map which it induces.  Then $g$ is a Euclidean
Thurston map.  Its postcritical set $P_g$ is the set of branch points
of $p_1$ and $p_2$.  It is also the subdivision map of a finite
subdivision rule $\cQ$.  The subdivision complex of $\cQ$ is $S^2$
with cell structure the push forward of $\cS_1$ under $p_1\circ q_1$.
This is the same as the push forward of $\cS_2$ under $p_2\circ q_2$.
Its first subdivision is the push forward of $\cS_2$ under $p_1\circ
q_1$.  Figure~\ref{fig:maing} indicates the action of $g$.  The right
portion of Figure~\ref{fig:maing} shows the push forward of $\cS_1$
under $p_1\circ q_1$ in $S^2$ and the left portion of
Figure~\ref{fig:maing} shows the push forward of $\cS_2$ under
$p_1\circ q_1$ in $S^2$.  Most of the vertices in the left portion are
labeled with preimages in $\bR^2$.

Thus far we have the map $g$.  For $h\co S^2\to S^2$ we choose an
orientation-preserving homeomorphism which takes the 1-skeleton of the
push forward of $\cS_1$ into the 1-skeleton of the push forward of
$\cS_2$ such that $h$ fixes the images of
$(1,0),(0,0),\dotsc,(0,5),(1,5)$ and $h$ maps the image of $(2,4)$ to
the image of $(2,5)$ and the image of $(2,-1)$ to the image of
$(2,0)$.  Let $f=h\circ g$.  The action of $f$ is indicated in
Figure~\ref{fig:mainf}.  The map $f$ preserves the edge labels which
are given.  We see that $h(P_g)\subseteq g^{-1}(P_g)$.  As discussed
in the beginning of this section, it follows that $f$ is a NET map and
it is the subdivision map of a finite subdivision rule $\cR$.  The
single tile type of $\cR$ is a hexagon. The subdivision of the hexagon
is shown in Figure~\ref{fig:fsubrule}.  It is easy to check that $\cR$
has bounded valence and that the mesh of $\cR$ approaches 0
combinatorially.  The mapping scheme of $f$ is shown in
Figure~\ref{fig:mainscheme}, where points are labeled by their
preimages in $F_1$ under $p_1\circ q_1$.  This example was designed to
make it difficult to determine the invariant multicurves for possible
Thurston obstructions.

\begin{figure}\begin{center}
\includegraphics{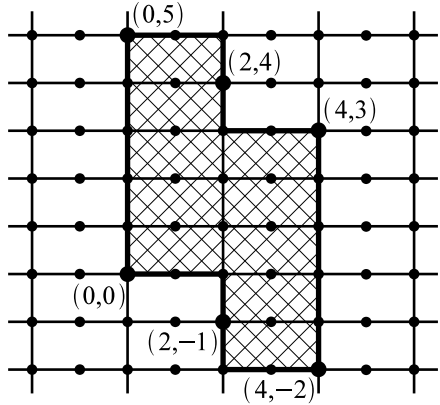} \caption{ A
fundamental domain for $\zG_1$ of the main example.}
\label{fig:maintess}
\end{center}\end{figure}

\begin{figure}\begin{center}
\includegraphics[scale=.8]{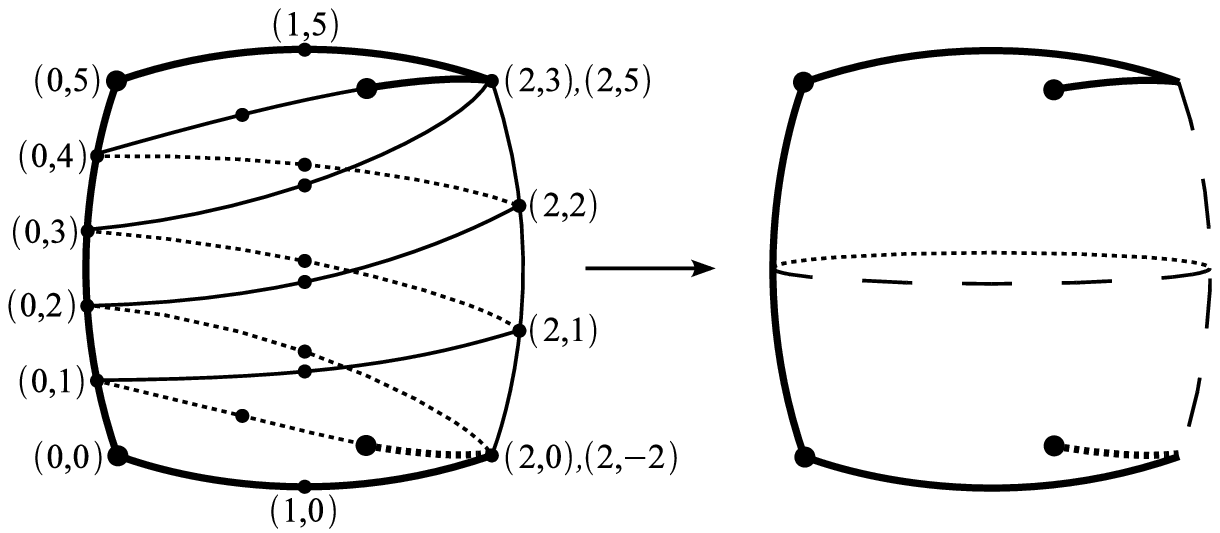} \caption{ The map $g$ of the
main example.}
\label{fig:maing}
\end{center}\end{figure}

\begin{figure}\begin{center}
\includegraphics[scale=.8]{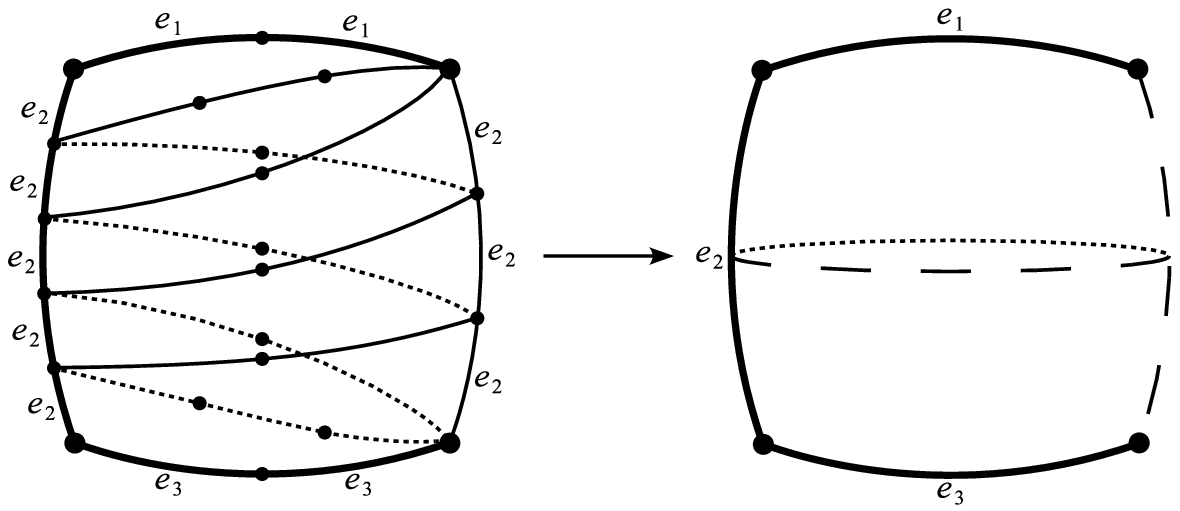} \caption{ The map $f$ of the
main example.}
\label{fig:mainf}
\end{center}\end{figure}

\begin{figure} \centerline{\includegraphics{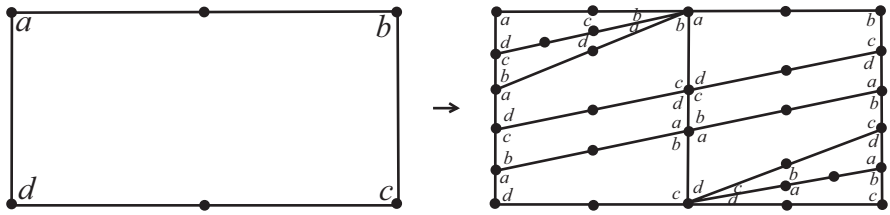}}
\caption{The subdivision of the tile type for the finite subdivision
rule $\cR$ of the main example.} \label{fig:fsubrule}
\end{figure}

\begin{figure}\begin{center}
\includegraphics{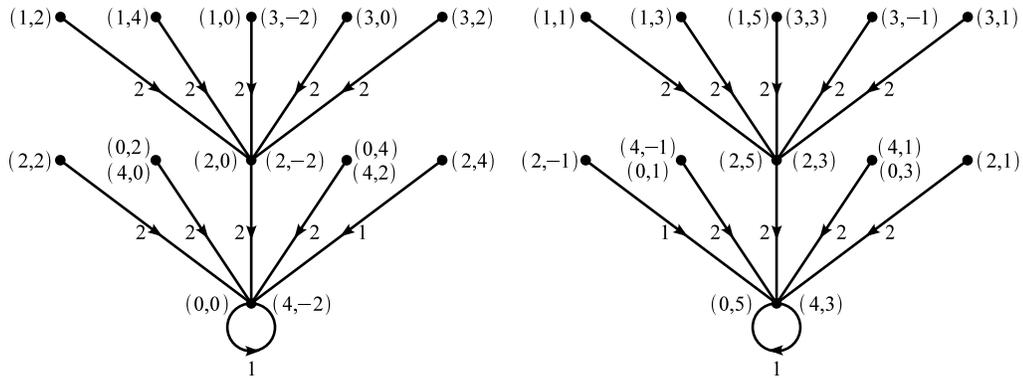} \caption{ The mapping
scheme for the main example.}
\label{fig:mainscheme}
\end{center}\end{figure}

\end{ex}

\begin{ex}\label{ex:BEKP} We begin this example by finding a
Latt\`{e}s map $g$ which as a Euclidean Thurston map has lattices
$\zL_2=\left<1,\frac{1+\sqrt{-3}}{2}\right>$, $\zL_1=\sqrt{-3}\zL_2$
and identification map induced by the linear automorphism given by
$\zF(z)=\sqrt{-3}z$.  Because $\zF$ is a conformal affine map, our
Riemann spheres $\mathbb{C}/\zG_1$ and $\mathbb{C}/\zG_2$ have the
same conformal structure.  Let $\zt=\frac{1+\sqrt{-3}}{2}$.  See
Figure~\ref{fig:BEKP}.  The parallelogram $F_2$ with vertices 0, 2,
$\zt$ and $2+\zt$ is a fundamental domain for the action of $\zG_2$.
The image $F_1$ of $F_2$ under $\zF$ is a fundamental domain for the
action of $\zG_1$.  Both $F_1$ and $F_2$ are shown in
Figure~\ref{fig:BEKP}.

\begin{figure}\begin{center}
\includegraphics{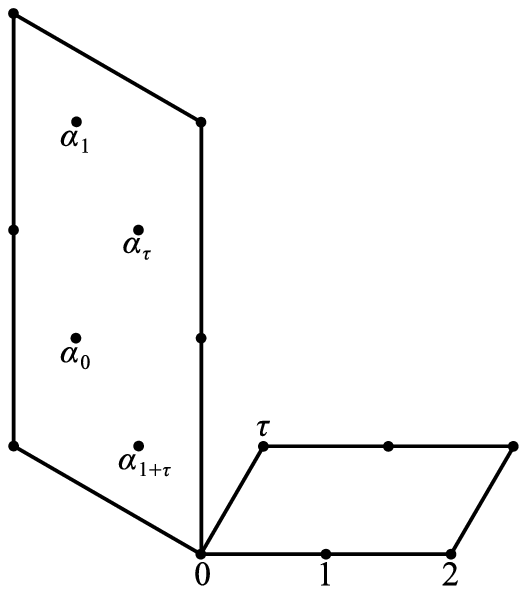} \caption{ Computing the Latt\`{e}s
map $g$.}
\label{fig:BEKP}
\end{center}\end{figure}

The matrix of $\zF$ with respect to the ordered basis $(1,\zt)$ of
$\bR^2$ is $\left[\begin{smallmatrix}-1 & -2 \\ 2 &
1 \end{smallmatrix}\right]$.  So $\zF(\za)\equiv \za\mod 2 \zL_2$ for
every $\za\in \zL_2$.  The four elements of $\zL_2$ in the interior of
$F_1$ are $\za_0=-2+2 \zt$, $\za_1=-3+4 \zt$, $\za_\zt=-2+3 \zt$ and
$\za_{1+\zt}=-1+\zt$.  The images of these four lattice points under
$p_1\circ q_1$ are the critical points of $g$.  One verifies that
$\za_\zl\equiv \zl\mod 2 \zL_2$ for every $\zl\in \{0,1,\zt,1+\zt\}$.
Using the fact that $\zF$ is the identity map modulo $2 \zL_2$, it
follows that $g(p_1(q_1(\za_\zl)))=p_2(q_2(\zl))$ for every $\zl\in
\{0,1,\zt,1+\zt\}$.

Let $\zw=\frac{-1+\sqrt{-3}}{2}$.  Because the map $z\mapsto \zw z$
stabilizes $\zL_1$, it determines an analytic homeomorphism from
$\widehat{\bC}=\bC/\zG_1$ to itself.  In the same way, the map
$z\mapsto \zw z$ determines an analytic homeomorphism from
$\widehat{\bC}=\bC/\zG_2$ to itself.  Because the map $z\mapsto \zw z$
commutes with the map $\zF$ which induces the identification map,
our two maps from $\widehat{\bC}$ to itself are equal.  Let $\zj\co
\widehat{\bC}\to \widehat{\bC}$ be this map, which is a M\"{o}bius
transformation.  The map $z\mapsto \zw z$ permutes the cosets of
$2\zL_1$ in $\zL_1$, and it permutes the cosets of $2\zL_1$ in
$\zL_2$.  Hence it permutes the cosets $\pm \za_\zl+2 \zL_1$ for
$\zl\in \{0,1,\zt,1+\zt\}$.  The congruence $\za_\zl\equiv \zl\mod 2
\zL_2$ implies that $\pm \za_\zl+2 \zL_1\subseteq \zl+2 \zL_2$ for
$\zl\in \{0,1,\zt,1+\zt\}$.  So the action of $z\mapsto \zw z$ on
these cosets of $2 \zL_1$ is the same as its action on these cosets of
$2 \zL_2$.  This gives the following congruences modulo $2 \zL_1$.
  \begin{equation*}
\zw\za_0\equiv \pm \za_0\quad \zw\za_1\equiv \pm \za_{1+\zt}\quad
\zw\za_{1+\zt}\equiv \pm \za_\zt\quad \zw\za_\zt\equiv \pm \za_1
  \end{equation*}
So $\zj$ is a M\"{o}bius transformation with order 3 which fixes
$p_1(q_1(0))$ and $p_1(q_1(\za_0))$ and cyclically permutes
$p_1(q_1(\za_1))$, $p_1(q_1(\za_{1+\zt}))$ and $p_1(q_1(\za_\zt))$.

We identify $\bC/\zG_1$ with $\widehat{\bC}$ so that $0\in \bC$ maps
to $\infty$, the point $\za_0$ maps to 0 and 1 maps to
$-\frac{1}{2}$.  Since $\zj$ is a M\"{o}bius transformation with order
3 which fixes 0 and $\infty$ and our identifications preserve
orientation, $\zj(z)=\zw z$.  Since the maps $z\mapsto \sqrt{-3}z$ and
$z\mapsto \zw z$ commute, so do $g$ and $\zj$.  Hence $g(\zw z)=\zw
g(z)$ for every $z\in \widehat{\bC}$.  Because $\sqrt{-3}\cdot 0=0$
and 0 maps to $\infty$ in $\widehat{\bC}$, we see that
$g(\infty)=\infty$.  Because $\za_0$ maps to $0\in \widehat{\bC}$, the
point 0 is a critical point of $g$ with $g(0)=\infty$.  Because 1 maps
to $-\frac{1}{2}\in \widehat{\bC}$, it follows that $-\frac{1}{2}$ is
fixed by $g$ and it is the image of a critical point of $g$.

Now we finally determine $g$.  Since the square of the modulus of
$\sqrt{-3}$ is 3, the map $g$ is a cubic rational function.  Since it
has poles at 0 and $\infty$ with 0 being a critical point, we may
assume that its denominator is $z^2$.  Because $g(\zw z)=\zw g(z)$, we
may assume that its numerator is $az^3+b$ for some $a,b\in \bC$:
$g(z)=\frac{az^3+b}{z^2}$.  Since $-\frac{1}{2}$ is fixed by $g$, the
polynomial $2az^3+z^2+2b$ has a root at $-\frac{1}{2}$:
$-\frac{a}{4}+\frac{1}{4}+2b=0$.  Hence $b=\frac{1}{8}(a-1)$ and
$2az^3+z^2+2b=(2z+1)(az^2+\frac{1}{2}(1-a)z+\frac{1}{4}(a-1))$.  Since
$-\frac{1}{2}$ is the image under $g$ of a critical point, the second
factor is a square, and so its discriminant is 0:
$0=\frac{1}{4}(a-1)^2-a(a-1)=-\frac{1}{4}(3a+1)(a-1)$.  If $a=1$, then
$b=0$, which is impossible.  Thus $a=-\frac{1}{3}$, $b=-\frac{1}{6}$
and $g(z)=-\frac{1}{6}\frac{2z^3+1}{z^2}$.

The critical points of $g$ are 0, 1, $\zw$ and $\overline{\zw}$.
These are mapped to $\infty$, $-\frac{1}{2}$, $-\frac{1}{2}\zw$ and
$-\frac{1}{2}\overline{\zw}$ by $g$ in order.  The map $h\co
\widehat{\bC}\to \widehat{\bC}$ defined by $h(z)=-\frac{1}{2z}$ maps
the latter four points to the former four points.  So
$f(z)=h(g(z))=\frac{3z^2}{2z^3+1}$ is a NET map which maps its set of
four critical points bijectively to itself.  Figure~\ref{fig:schemef}
shows the mapping scheme of $f$.  This is the rational function which
appears in the proof of statement 2 of Theorem 1.1 of \cite{BEKP}.

In \cite{L} Russell Lodge computes the slope function $\zs_f$ of $f$
introduced in Section~\ref{sec:slopefn}.  His methods are different
from those of Section~\ref{sec:slopefn}.  See
Remark~\ref{remark:virtual} for a bit more on this.

\end{ex}

  \begin{figure}\begin{center}
\includegraphics{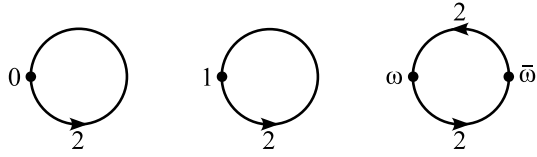} \caption{ The
mapping scheme of $f(z)=\frac{3z^2}{2z^3+1}$.}
\label{fig:schemef}\end{center}
  \end{figure}

\section{Pullbacks of simple closed curves}
\label{sec:pullbacks}\nosubsections

In this section we investigate pullbacks of simple closed curves under
NET maps.  We begin by reviewing well-known facts about simple closed
curves in tori and 4-punctured spheres.  A good reference for this
material is the book \cite{FM} by Farb and Margalit.  See Proposition
1.5 and Proposition 2.6 in \cite{FM}.

Let $f\co S^2\to S^2$ be a NET map.  We maintain the setting of
Section~\ref{sec:defns}.

Let $j\in \{1,2\}$.  Let $(\zl_j,\zm_j)$ be an ordered basis of
$\zL_j$.  Let $p$ and $q$ be relatively prime integers.  The universal
covering map $q_j$ maps every line in $\bR^2$ with parametrization of
the form $(x,y)=v+t(q \zl_j+p \zm_j)$ to a simple closed curve in
$T_j$, which is said to have slope $\frac{p}{q}\in
\widehat{\bQ}=\bQ\cup \{\infty\}$.  The resulting map from slopes to
curves establishes a bijective correspondence between $\widehat{\bQ}$
and the set of nontrivial homotopy classes of simple closed curves in
$T_j$.

A simple closed curve in $S^2\setminus P_j$ is peripheral if it is
homotopic to a very small closed curve around an element of $P_j$.  A
simple closed curve in $S^2\setminus P_j$ is essential if it is not
null homotopic.  If $\zg$ is an essential, nonperipheral simple closed
curve in $S^2\setminus P_j$, then $p_j^{-1}(\zg)$ consists of two
disjoint simple closed curves in $T_j$.  They are not null homotopic.
Being disjoint, they are homotopic to each other and hence have the
same slope.  This establishes a bijection between $\widehat{\bQ}$ and
the set of homotopy classes of essential, nonperipheral simple closed
curves in $S^2\setminus P_j$.  We must take care that this bijection
is very uncanonical.

Here is a slightly different point of view.  We use the fact that
$p_j\circ q_j\co \bR^2\setminus \zL_j\to S^2\setminus P_j$ is a
regular covering map with group of deck transformations $\zG_j$.  Let
$\za$ be an essential, nonperipheral simple closed curve in
$S^2\setminus P_j$.  Suppose that $\za$ has a lift to $\bR^2$ which
joins points $x$ and $y$.  Since $\za$ is not null homotopic, $x\ne
y$.  Because the deck transformations of $p_j\circ q_j$ are Euclidean
isometries and because this lift of $\za$ is also a lift of a closed
curve in $T_j$, we have that $y=\zg(x)$ for some translation $\zg$ in
$\zG_j$.  It follows that the slope of the line through $x$ and $y$ is
independent of the choice of lift of $\za$ to $\bR^2$.  The slope of
such a line relative to the ordered basis $(\zl_j,\zm_j)$ of $\zL_j$
is the slope of $\za$.

If $\zg$ is an essential, nonperipheral simple closed curve in
$S^2\setminus P_j$, then $\zg$ separates two points, $x$ and $y$, of
$P_j$ from the other two points of $P_j$.  We call an arc in $S^2$
joining $x$ and $y$ which is disjoint from $\zg$ a \emph{core arc}
for $\zg$.  Giving a homotopy class of essential, nonperipheral simple
closed curves in $S^2\setminus P_j$ is equivalent to giving such a
core arc.

In this paragraph we make a definition to prepare for the next
theorem.  Let $A$ be a finite Abelian group.  Let $H$ be a subset of
$A$ which is the disjoint union of four inverse pairs $\{\pm h_1\}$,
$\{\pm h_2\}$, $\{\pm h_3\}$ and $\{\pm h_4\}$.  (It is possible that
$h_i=-h_i$.)  Let $B$ be a subgroup of $A$ such that $A/B$ is cyclic,
and let $a$ be an element of $A$ whose image in $A/B$ generates $A/B$.
Let $n$ be the order of $A/B$.  For every $k\in \{1,2,3,4\}$ exactly
one coset $c a+B$ of $B$ in $A$ contains either $h_k$ or $-h_k$, where
$c$ is an integer with $0\le c\le n/2$.  Let $c_1$, $c_2$, $c_3$,
$c_4$ be these four integers ordered so that $c_1\le c_2\le c_3\le
c_4$.  (The integer $c_k$ need not correspond to $\pm h_k$.)  We call
$c_1$, $c_2$, $c_3$, $c_4$ the \emph{coset numbers} for $H$ relative
to $B$ and $a$ or relative to $B$ and the generator $a+B$ of $A/B$.
We are interested in coset numbers when $A=\zL_2/2\zL_1$, where
$\zL_1$ and $\zL_2$ are the lattices in Section~\ref{sec:defns}.  If
$\zl$ and $\zm$ form a basis of $\zL_2$, then the image of $\zl$ in
$A$ generates a cyclic subgroup $B$ and the image of $\zm$ in $A$ is
an element $a$ whose image in $A/B$ generates $A/B$.  Recall from the
end of Section~\ref{sec:defns} that $q_1^{-1}(p_1^{-1}(P_2))\subseteq
\zL_2$.  Thus we may speak of coset numbers for $H=p_1^{-1}(P_2)$
relative to $B$ and $a$.  We also call these coset numbers the coset
numbers for $q_1^{-1}(p_1^{-1}(P_2))$ relative to $\zl$ and $\zm$.
The coset number of $\zh\in q_1^{-1}(p_1^{-1}(P_2))$ is the smallest
nonnegative integer $c$ for which there exists an integer $b$ such
that $\pm \zh\in b \zl+c \zm+2 \zL_1$.

This takes us to the main result of this section.

\begin{thm}\label{thm:degscmpts} Let $f$ be a NET map in the setting
of Section~\ref{sec:defns}.  Let $\zd$ be an essential, nonperipheral
simple closed curve in $S^2\setminus P_2$ with slope $\frac{p}{q}$,
where $p$ and $q$ are relatively prime integers.  Let $\zl=q \zl_2+p
\zm_2\in \zL_2$.  Let $d$ be the order of the image of $\zl$ in
$\zL_2/\zL_1$.  Let $d'$ be the positive integer such that
$dd'=\left|\zL_2/\zL_1\right|=\deg(f)$.  Since $p$ and $q$ are
relatively prime, there exists $\zm\in \zL_2$ such that $\zl$ and
$\zm$ form another basis of $\zL_2$.  Let $c_1$, $c_2$, $c_3$, $c_4$
be the coset numbers for the elements of $q_1^{-1}(p_1^{-1}(P_2))$
relative to $\zl$ and $\zm$.  Then the following statements hold.
\begin{enumerate}
  \item Every connected component of $f^{-1}(\zd)$ maps to $\zd$ with
degree $d$.
  \item The number of essential, nonperipheral components in
$f^{-1}(\zd)$ is $c_3-c_2$.
  \item The number of peripheral components in $f^{-1}(\zd)$ is
$c_2-c_1+c_4-c_3$.
  \item The number of null homotopic components in $f^{-1}(\zd)$ is
$c_1-c_4+d'$.
  \item The lines in $\bR^2$ with slope $\frac{p}{q}$ relative to the
basis $(\zl_2,\zm_2)$ of $\zL_2$ which map under $p_1\circ q_1$ to
essential, nonperipheral simple closed curves in $S^2\setminus P_2$ are
exactly the $\zG_1$-translates of the lines with parametric forms
$(x,y)=t \zl+u \zm$ with parameter $t$ and $c_2<u<c_3$.
\end{enumerate}
\end{thm}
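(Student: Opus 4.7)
The plan is to pull the problem back to the universal cover $\bR^2$ via the standard diagram of Figure~\ref{fig:cmmvdgm} and reduce it to an elementary combinatorial count. Since $f\circ p_1\circ q_1 = p_2\circ q_2$, the set $(p_1\circ q_1)^{-1}(f^{-1}(\zd))$ coincides with $(p_2\circ q_2)^{-1}(\zd)$, which is a disjoint family of lines in $\bR^2$ parallel to $\zl = q\zl_2 + p\zm_2$. Writing these in $(t,v)$-coordinates with respect to the basis $(\zl,\zm)$ of $\zL_2$, they are the lines $L_u = \{t\zl+u\zm : t\in\bR\}$ with $u$ in some residue class $\pm u_0 + 2\bZ$, $u_0\notin\bZ$. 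Connected components of $f^{-1}(\zd)$ correspond to $\zG_1$-orbits of these lines. Translations by $2\zL_1$ shift $u$ by the subgroup of $\bZ$ consisting of $\zm$-components of $2\zL_1$; using the basis $(\zl,\zm)$, this subgroup equals $n\bZ = 2d'\bZ$ since $n=|A/B|=2d'$. The rotation $x\mapsto -x$ acts by $u\mapsto -u$. A direct count shows that $\pm u_0 + 2\bZ$ modulo this action has exactly $d'$ orbits, with a fundamental domain $[0,d']$ for $u$, matching the total in statements~(2)--(4). For statement~(1), compare periods of the parametrization $t\mapsto t\zl+u\zm$: the smallest $T>0$ with $T\zl\in 2\zL_2$ is $2$, while the smallest with $T\zl\in 2\zL_1$ is $2d$. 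For generic $u$---and in particular for all our orbit representatives, which avoid $\bZ$---the rotations in $\zG_j$ do not reduce these periods, so each component of $f^{-1}(\zd)$ maps to $\zd$ with degree $2d/2 = d$.

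To classify each $L_u$ (statement~(5) and the types in (2)--(4)), I would choose representatives $\zh_1,\zh_2,\zh_3,\zh_4\in q_1^{-1}(p_1^{-1}(P_2))$ of the four $\zG_1$-orbits, with $\zm$-coordinates $c_1,c_2,c_3,c_4$ respectively; the definition of coset numbers guarantees that such representatives exist. The $\zG_1$-orbit of $L_u$ consists of the lines $L_{u'}$ with $u'\in\pm u + 2d'\bZ$, which partition $\bR^2$ into horizontal strips. Adjacent strips have $p_1\circ q_1$-images lying in the two distinct disks of $S^2$ bounded by $\zg_u=(p_1\circ q_1)(L_u)$. For $u\in(0,d')$, the strip $\{-u<v<u\}$ contains precisely those $\zh_i$ with $c_i<u$, while the strip $\{u<v<2d'-u\}$ contains those with $c_i>u$, because all $c_i\in[0,d']\subset(-u,2d'-u)$. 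Thus $\zg_u$ separates $P_2$ as $\{i : c_i<u\}$ versus $\{i : c_i>u\}$, giving a 2-2 split (essential nonperipheral) exactly when $c_2<u<c_3$, a 1-3 split (peripheral) when $u\in(c_1,c_2)\cup(c_3,c_4)$, and a 0-4 split (null homotopic) when $u\in(0,c_1)\cup(c_4,d')$. This proves statement~(5).

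The counts in (2)--(4) then follow from the elementary observation that for any $u_0\notin\bZ$, the set $\pm u_0 + 2\bZ$ places exactly one element in each unit interval $(k,k+1)$ of $[0,d']$ for $k\in\{0,1,\dots,d'-1\}$. Consequently, the number of orbit representatives in an integer-endpoint subinterval $(a,b)\subseteq[0,d']$ equals $b-a$, yielding $c_3-c_2$ essential nonperipheral components, $(c_2-c_1)+(c_4-c_3)$ peripheral components, and $c_1+(d'-c_4)$ null homotopic components. The step requiring the most care is verifying the identification of the $\zm$-component subgroup of $2\zL_1$ with $2d'\bZ$, which is where the algebraic definition of coset numbers via $A = \zL_2/2\zL_1$ and $B = \langle\zl+2\zL_1\rangle$ enters directly; after that, the remaining arguments are straightforward strip counting and disk separation.
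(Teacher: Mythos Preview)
Your argument is correct and follows essentially the same route as the paper's own proof: lift everything to $\bR^2$ via the standard diagram, identify $(p_1\circ q_1)^{-1}(f^{-1}(\zd))$ with the $\zG_2$-orbit of a line of direction $\zl$, read off the degree from the order of $\zl$ in $\zL_2/\zL_1$, and classify each component by counting how many of the four $P_2$-preimage classes lie on each side. The only cosmetic difference is that the paper organizes the count using a parallelogram fundamental domain $F$ for $\zG_1$ (so each component of $f^{-1}(\zd)$ has a unique lift in $F$, and one looks at the two pieces $U,V$ of $F\setminus\za$), whereas you work directly with $\zG_1$-orbits of lines and the strip decomposition of their complement; the strip $\{-u<v<u\}$ plays the role of one of $U,V$. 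Your identification of the $\zm$-projection of $2\zL_1$ with $2d'\bZ$ is exactly what the paper establishes (more explicitly) in the proof of Lemma~\ref{lemma:cmpuc} by exhibiting a basis $d\zl,\ l\zl+d'\zm$ of $\zL_1$, so that step is fine and your flag on it is appropriate.
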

  \begin{proof} Because $\zd$ has slope $\frac{p}{q}$, it is homotopic
to the image in $S^2\setminus P_2$ under $p_2\circ q_2$ of a line in
$\bR^2$ with a parametrization of the form $(x,y)=v+t \zl$.  Without
loss of generality we assume that $\zd$ is the image in $S^2\setminus
P_2$ under $p_2\cdot q_2$ of such a line.  Because $p$ and $q$ are
relatively prime, the line segment joining $v$ and $v+2\zl$ maps
injectively to $T_2$ and hence to $S^2\setminus P_2$ except for its
endpoints.

\begin{figure}\begin{center}
\includegraphics{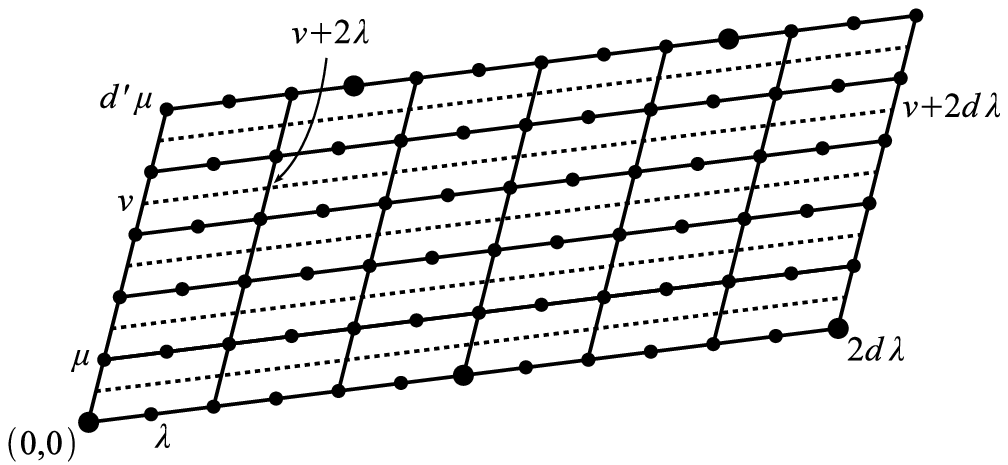}\caption{ Proving
Theorem~\ref{thm:degscmpts}.}
\label{fig:degscmpts}
\end{center}\end{figure}

Such a line segment is shown in Figure~\ref{fig:degscmpts} together
with some of its $\zG_2$-translates, drawn as dashed line segments.
The dots in Figure~\ref{fig:degscmpts} are elements of $\zL_2$ with
the larger ones being in $\zL_1$.  Each of the smallest parallelograms
bounded by solid line segments in Figure~\ref{fig:degscmpts} is a
fundamental domain for the action of $\zG_2$ on $\bR^2$.  The entire
parallelogram subdivided by these small parallelograms is a
fundamental domain $F$ for the action of $\zG_1$ on $\bR^2$.  As such,
it contains exactly one lift to $\bR^2$ under $p_1\circ q_1$ of every
connected component of $f^{-1}(\zd)$.

Because the lift of $f$ to $\bR^2$ is the identity map, the line
segment joining $v$ and $v+2\zl$ is a lift of $\zd$ in $F$ under the
map $f\circ p_1\circ q_1$.  The $\zG_2$-translates of this lift are
other lifts of $\zd$.  A concatenation of such line segments is the
lift of a closed curve in $S^2\setminus P_1$ if and only if the
difference between its endpoints is an element of $2 \zL _1$ which is
not a nontrivial multiple of an element of $2 \zL_1$.  In other words,
this difference is the smallest multiple of $2\zl$ which lies in
$2\zL_1$.  This is the order of the image of $2\zl$ in $2\zL_2/2\zL_1$,
which equals the order $d$ of the image of $\zl$ in $\zL_2/\zL_1$.
This proves the first statement of Theorem~\ref{thm:degscmpts}.

Figure~\ref{fig:degscmptsb} illustrates statements 2 through 4.  The
coset numbers $c_1$, $c_2$, $c_3$, $c_4$ determine a partition of the
line segment joining $(0,0)$ and $d' \zm$.  The components of
$f^{-1}(\zd)$ corresponding to the first and last of these subsegments
are null homotopic.  The components corresponding to the subintervals
adjacent to these are peripheral.  The remaining components are
essential and nonperipheral.

  \begin{figure}
\centerline{\includegraphics{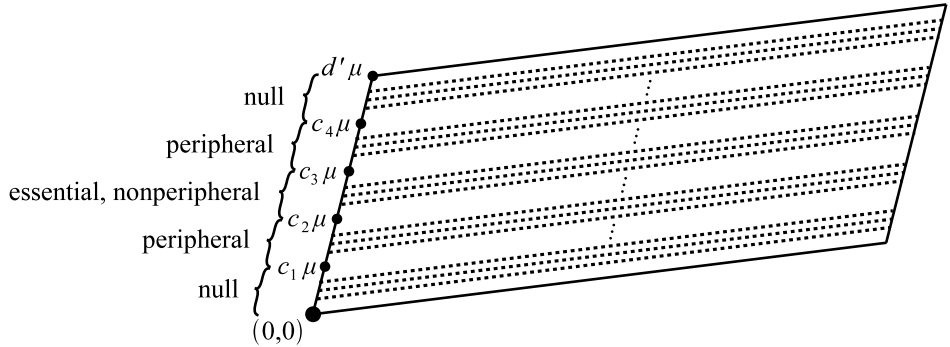}} \caption{
Illustrating Theorem~\ref{thm:degscmpts}.}
\label{fig:degscmptsb}
  \end{figure}

Now we prove these statements.  Since the restriction of $f$ to every
connected component of $f^{-1}(\zd)$ has degree $d$ and $d
d'=\deg(f)$, the number of these components is $d'$.  Let $\za$ be one
of the $d'$ line segments in $F$ which is a lift of a connected
component of $f^{-1}(\zd)$.  The action of $\zG_1$ on the boundary of
$F$ identifies two halves of the bottom of $F$ by a rotation of order
2.  In general the identification of the top of $F$ is only slightly
more complicated because the top two corners of $F$ are not
necessarily elements of $\zL_1$.  The two sides of $F$ are identified
by a translation.  Let $U$ and $V$ be the connected components of the
complement of $\za$ in $F$.  Both $U$ and $V$ map to open disks in
$S^2$ under $p_1\circ q_1$ and the image of $\za$ separates these two
disks.  So for the image of $\za$ to be essential and nonperipheral,
two elements of $U$ must map to distinct elements of $P_2$ and two
elements of $V$ must map to distinct elements of $P_2$.  Thus the
number of essential, nonperipheral components of $f^{-1}(\zd)$ is
$c_3-c_2$.  This proves statement 2.  Statements 3 and 4 can be proven
similarly.  Statement 5 is now clear.

This proves Theorem~\ref{thm:degscmpts}.

\end{proof}

The following lemma provides a way to compute the coset numbers $c_1$,
$c_2$, $c_3$, $c_4$ in Theorem~\ref{thm:degscmpts}.

\begin{lemma}\label{lemma:cmpuc}  Maintain the setting of
Theorem~\ref{thm:degscmpts}.  Let $r$ and $s$ be integers, and let
$\zh=r\zl_2+s\zm_2\in \zL_2$.  Let $b$ and $c$ be integers such that
$c\ge 0$ and $c$ is as small as possible such that $\pm \zh\in b \zl+c
\zm+2 \zL_1$.  Then $c$, the coset number of $\zh$ with respect to
$\zl$ and $\zm$, is the smallest nonnegative integer congruent to $\pm
(pr-qs)$ modulo $2d'$.
\end{lemma}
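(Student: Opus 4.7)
The plan is to convert the coset-membership condition into a calculation in the finite cyclic quotient $\zL_2/(\bZ\zl+2\zL_1)$, whose order I will show to equal $2d'$.

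First, since $(\zl,\zm)$ is a basis of $\zL_2$, every element of $\zL_2$ has unique integer coordinates in this basis. Writing $\zm=\za\zl_2+\zb\zm_2$, the change-of-basis matrix $\left[\begin{smallmatrix}q&\za\\p&\zb\end{smallmatrix}\right]$ sending $(\zl,\zm)$-coordinates to $(\zl_2,\zm_2)$-coordinates has determinant $q\zb-p\za=\pm1$, so inverting it expresses $\zh=r\zl_2+s\zm_2$ uniquely as $\zh=b_0\zl+c_0\zm$ with $c_0=\pm(qs-pr)=\mp(pr-qs)$.

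Next, since $b$ ranges over all of $\bZ$, the condition ``$\pm\zh\in b\zl+c\zm+2\zL_1$ for some $b\in\bZ$'' is equivalent to $(c\mp c_0)\zm\in\bZ\zl+2\zL_1$. Hence the admissible values of $c$ are exactly the integers satisfying $c\equiv\pm c_0\pmod{m}$, where $m$ is the order of the image of $\zm$ in the quotient group $\zL_2/(\bZ\zl+2\zL_1)$. The sought minimum is then the smallest nonnegative integer congruent to $\pm c_0=\pm(pr-qs)$ modulo $m$, so the lemma reduces to showing $m=2d'$.

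The main step is this index computation. I would use the direct-sum decomposition $\zL_2=\bZ\zl\oplus\bZ\zm$ and the projection $\zp\co\zL_2\to\bZ\zm$ along $\bZ\zl$. Under $\zp$, the subgroup $\bZ\zl+\zL_1$ maps onto $\zp(\zL_1)$, yielding an isomorphism $\zL_2/(\bZ\zl+\zL_1)\cong\bZ\zm/\zp(\zL_1)$. On the other hand, the third isomorphism theorem gives $\zL_2/(\bZ\zl+\zL_1)\cong(\zL_2/\zL_1)/\langle\zl+\zL_1\rangle$, which has order $|\zL_2/\zL_1|/d=dd'/d=d'$. Comparing, $\zp(\zL_1)=d'\bZ\zm$, and by $\bZ$-linearity of $\zp$, $\zp(2\zL_1)=2d'\bZ\zm$. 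Therefore $\zL_2/(\bZ\zl+2\zL_1)\cong\bZ\zm/2d'\bZ\zm\cong\bZ/2d'$, generated by the image of $\zm$, so $m=2d'$, completing the proof. The only potentially delicate point is the index equality $[\bZ\zm:\zp(\zL_1)]=d'$; routing it through the intermediate quotient $(\zL_2/\zL_1)/\langle\zl+\zL_1\rangle$ reduces it directly to the two integers named in Theorem~\ref{thm:degscmpts}.
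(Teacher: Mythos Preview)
Your proof is correct. The paper takes a slightly more concrete route to the same conclusion: instead of computing the index $[\zL_2:\bZ\zl+2\zL_1]$ via quotient groups and the third isomorphism theorem, it constructs an explicit basis of $\zL_1$ adapted to $(\zl,\zm)$, namely $(d\zl,\,l\zl+d'\zm)$ for some integer $l$ (a Hermite-normal-form argument), and then reads off directly that the $\zm$-coordinate of any element of $2\zL_1$ is a multiple of $2d'$. Your projection $\zp$ is essentially the coordinate-free version of this; the paper's explicit basis amounts to choosing a lift of the generator $d'\zm$ of $\zp(\zL_1)$ back into $\zL_1$. Both proofs then compute the $\zm$-coordinate $c_0$ of $\zh$ by the same $2\times2$ determinant calculation. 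Your route is a bit slicker; the paper's gives the small bonus of an explicit $\zL_1$-basis, though that basis is not used elsewhere.
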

  \begin{proof} We begin by finding a useful basis of $\zL_1$.  Not
every element of $\zL_1$ is a multiple of $\zl$.  So there exist integers
$l$ and $m$ with $m>0$ such that $l \zl+m \zm\in \zL_1$.  Let $\zm'$
be an element of $\zL_1$ such that $\zm'=l \zl+m \zm$ with $m>0$ and
$m$ as small as possible.  We claim that $d \zl$ and $\zm'$ form a
basis of $\zL_1$.

To prove this, let $\zn\in \zL_1$.  It suffices to prove that $\zn$ is
an integral linear combination of $d \zl$ and $\zm'$.  There exist
integers $x$ and $y$ such that $\zn=x \zl+y \zm$.  Subtracting an
appropriate multiple of $\zm'$ from $\zn$ obtains an element $\zn'\in
\zL_1$ with $\zn'=x' \zl+y' \zm$ and $0\le y'<m$.  The choice of $m$
implies that $y'=0$.  Hence $\zn'=x' \zl$.  The choice of $d$ implies
that $d| x'$.  Thus every element of $\zL_1$ is an integral linear
combination of $d \zl$ and $\zm'$, and so $d \zl$ and $\zm'$ form a
basis of $\zL_1$.  Since the determinant of the matrix
$\left[\begin{smallmatrix}d & l \\ 0 & m \end{smallmatrix}\right]$ is
$\left|\zL_2/\zL_1\right|=\deg(f)$, it follows that $m=d'$.

Now let $x$ and $y$ be the integers such that $\zh=x \zl+y \zm$.  We
seek the nonnegative integer $c$ which is as small as possible such
that there exists an integer $b$ for which $\pm \zh\in b \zl+c \zm+2
\zL_1$.  Equivalently, $b \zl+c \zm=2\zl'\pm (x \zl+y \zm)$ for some
$\zl'\in \zL_1$.  The previous paragraph shows that $\zl'$ is an
integral linear combination of $d \zl$ and $\zm'=l \zl+d'\zm$.  It
follows that $c$ is the smallest nonnegative integer congruent to $\pm
y$ modulo $2 d'$.

So now we determine $y$.  Let $t$ and $u$ be the integers such that
$\zm=t \zl_2+u \zm_2$.  Using the fact that $\zh=r\zl_2+s\zm_2=x \zl+y
\zm$ and multilinearity of determinants, we see that
  \begin{equation*}
\left|\begin{matrix} q& r \\ p & s \end{matrix}\right|=
x\left|\begin{matrix} q& q \\ p & p \end{matrix}\right|+
y\left|\begin{matrix} q& t \\ p & u \end{matrix}\right|=\pm y,
  \end{equation*}
the last determinant being $\pm 1$ because $\zl$ and $\zm$ form a
basis of $\zL_2$.  So $y=\pm (pr-qs)$.  Therefore $c$ is the smallest
nonnegative integer congruent to $\pm (pr-qs)$ modulo $2d'$.

This proves Lemma~\ref{lemma:cmpuc}.

\end{proof}

\begin{remark}\label{remark:inttn}  The element $\zh$ in
Lemma~\ref{lemma:cmpuc} can be any element of $\zL_2$.  However, if
$r$ and $s$ are relatively prime, then we have the following
interpretation.  If $r$ and $s$ are relatively prime, then $\zh$
determines a simple closed curve $\zg$ in $T_2$ with slope
$\frac{s}{r}$.  Let $\widetilde{\zd}$ be a lift of $\zd$ to $T_2$, a
simple closed curve with slope $\frac{p}{q}$.  As in Section 1.2.3 of
\cite{FM} by Farb and Margalit, the intersection number
$\zi(\widetilde{\zd},\zg)$ is $\left|pr-qs\right|$.  So in this case
$c$ is the smallest nonnegative integer which is congruent to $\pm $
this intersection number modulo $2d'$.
\end{remark}

We continue with one more general observation about computations.
Lemma~\ref{lemma:cmpuc} implies that the coset numbers $c_ 1$, $c_2$,
$c_3$, $c_4$ depend in a simple way on $p$ and $q$ and that to
determine them it suffices to determine them modulo $2d'$.  So suppose
that the coset numbers $c_ 1$, $c_2$, $c_3$, $c_4$ arise from slope
$\frac{p}{q}$.  Let $\frac{p'}{q'}\in \widehat{\bQ}$ and suppose that,
just as for $\frac{p}{q}$, the order of the image of $q'\zl_2+p'\zm_2$
in $\zL_2/\zL_1$ is $d$.  Also suppose that there exists an integer
$u$ which is a unit modulo $2d'$ such that $p'\equiv up \mod 2d'$ and
$q'\equiv uq\mod 2d'$.  The point of this discussion is that if $c'_
1$, $c'_2$, $c'_3$, $c'_4$ are the coset numbers as in
Theorem~\ref{thm:degscmpts} for $\frac{p'}{q'}$, then $c'_1$, $c'_2$,
$c'_3$, $c'_4$ are congruent to $\pm uc_1$, $\pm uc_2$, $\pm uc_3$,
$\pm uc_4$ modulo $2d'$, not necessarily in order.

Now we apply Theorem~\ref{thm:degscmpts} to the main example.
Table~\ref{tab:mainex} displays the results.  The first column gives
$q$ modulo 20.  The second column gives $2p+q$ modulo 5.  The third
column gives the degree $d$ of the restriction of $f$ to every
connected component of the inverse image of an essential simple closed
curve in $S^2\setminus P_f$ with slope $\frac{p}{q}$.  The next column
gives the coset numbers $c_1$, $c_2$, $c_3$, $c_4$ which appear in
Theorem~\ref{thm:degscmpts}.  The last three columns give the numbers
of essential nonperipheral components, peripheral components and null
homotopic components in this inverse image.

\begin{table}
\begin{tabular}{|c|c|c|c|c|c|c|}\hline
$q$ mod 20 & $2p+q$ mod 5 & deg & $c_1$,$c_2$,$c_3$,$c_4$ &
essl & perl & null \\ \hline
$\pm 1$, $\pm 3$, $\pm 5$, $\pm 7$, $\pm 9$ & $\pm 1$, $\pm 2$ & 10 &
0, 0, 1, 1 & 1 & 0 & 0  \\ \hline
$\pm 1$, $\pm 9$ & 0 & 2 & 0, 1, 4, 5 & 3 & 2 & 0  \\ \hline
$\pm 3$, $\pm 7$  & 0 & 2 & 0, 2, 3, 5 & 1 & 4 & 0 \\ \hline
$0$, $\pm 2$, $\pm 4$, $\pm 6$, $\pm 8$, 10 & $\pm 1$, $\pm 2$ &
5 & 0, 0, 2, 2 & 2 & 0 & 0  \\ \hline
$\pm 2$ & 0 & 1 & 0, 2, 8, 10 & 6 & 4 & 0 \\ \hline
$\pm 4$ & 0 & 1 & 0, 0, 6, 6 & 6 & 0 & 4 \\ \hline
$\pm 6$ & 0 & 1 & 0, 4, 6, 10 & 2 & 8 & 0 \\ \hline
$\pm 8$ & 0 & 1 & 0, 0, 2, 2 & 2 & 0 & 8 \\ \hline
\end{tabular}
\smallskip
\caption{ Degrees, coset numbers and numbers of components for
the main example.}
\label{tab:mainex}
\end{table}

For the main example $\zL_2=\bZ^2$ and
$\zL_1=\left<(2,-1),(0,5)\right>$.  Let $p$ and $q$ be relatively
prime integers.  We begin with an essential simple closed curve $\zd$
in $S^2\setminus P_f$ with slope $\frac{p}{q}$.  Let $d$ be the degree
of the restriction of $f$ to any connected component of $f^{-1}(\zd)$.

Statement 1 of Theorem~\ref{thm:degscmpts} implies that $d$ is the
smallest positive integer such that there exist integers $x$ and $y$
for which $x(2,-1)+y(0,5)=d(q,p)$.  Solving for $x$ and $y$, we find
that
  \begin{equation*}
x=\frac{dq}{2}\quad\text{and}\quad y=\frac{d(2p+q)}{10}.
  \end{equation*}
Suppose that $q\equiv 0 \mod 2$.  Then $x$ is an integer for every $d$.
For $y$ to be an integer, we see that the only condition on $d$ is that
$d\equiv 0\mod 5$ if $2p+q\not\equiv 0\mod 5$.  Now suppose that
$q\not\equiv 0\mod 2$.  Considering $x$ shows that $d\equiv 0\mod 2$.
Considering $y$ shows that, as before, $d\equiv 0\mod 5$ if
$2p+q\not\equiv 0\mod 5$.  This leads to the values of $d$ given in
Table~\ref{tab:mainex}.

Now we determine the remaining entries of the table.  The elements
$(0,0)$, $(2,0)$, $(0,5)$ and $(2,5)$ of $\zL_2$ map to the four
elements of $P_2$ under $p_1\circ q_1$.  To apply
Lemma~\ref{lemma:cmpuc}, we calculate $pr-qs$ for these four elements
and obtain 0, $2p$, $-5q$ and $2p-5q$.  In what follows, we find the
reduced residues of $\pm 1$ times these values modulo $2d'$.  It is
then easy to determine the remaining entries in Table~\ref{tab:mainex}
using Theorem~\ref{thm:degscmpts}.

First suppose that $d=10$ and $d'=1$.  According to
Table~\ref{tab:mainex}, the integer $q$ is odd.  So reducing 0, $2p$,
$-5q$ and $2p-5q$ modulo 2 yields 0, 0, 1, 1.  This completes the
computation for $d=10$.

Next suppose that $d=5$ and $d'=2$.  Then $q$ is even and $p$ is odd.
Regardless of whether $q\equiv 0\mod 4$ or $q\equiv 2\mod 4$, our four
values reduce to 0, 0, 2, 2 modulo 4.  This completes the computation
for $d=5$.

Next suppose that $d=2$ and $d'=5$.  Table~\ref{tab:mainex} shows that
$q\not\equiv 0\mod 2$.  It also shows that $2p+q\equiv 0\mod 5$, and
so $q\not\equiv 0\mod 5$.  So $q$ is a unit modulo 10.  Suppose that
$q\equiv 1\mod 5$.  Then $p\equiv 2\mod 5$.  So up to a sign, our
values reduce to 0, 1, 4, 5 modulo 10.  According to the observation
after Lemma~\ref{lemma:cmpuc}, multiplying $q$ by a unit modulo 10
amounts to multiplying these four values by the same unit.  The units
modulo 10 are represented by $\pm 1$ and $\pm 3$.  Since
multiplication by $-1$ does nothing, we need only consider
multiplication by 3.  We obtain 0, 2, 3, 5.  This completes the
computation for $d=2$.

Finally, suppose that $d=1$ and $d'=10$.  Table~\ref{tab:mainex} shows
that $q\equiv 0\mod 2$ and $2p+q\equiv 0\mod 5$. Hence $p\not\equiv
0\mod 2$ and $q\not\equiv 0\mod 5$.  Up to multiplication by a unit,
either $q\equiv 2\mod 20$ or $q\equiv 4\mod 20$.  Suppose that
$q\equiv 2\mod 20$.  Then $p\equiv -1,9\mod 20$.  So up to a sign, our
four values are 0, 2, 8, 10.  If $q\equiv 4\mod 20$, then $p\equiv
3,13\mod 20$.  Now we obtain 0, 0, 6, 6.  The units modulo 20 are $\pm
1$, $\pm 3$, $\pm 7$, $\pm 9$.  Up to a sign, $\pm 1$ and $\pm 9$
stabilize both $\{0,2,8,10\}$ and $\{0,0,6,6\}$.  Multiplying by 3
yields $\{0,4,6,10\}$ and $\{0,0,2,2\}$.  This completes the
computation for $d=1$.

\section{The slope function }\label{sec:slopefn}\nosubsections

Let $f$ be a NET map in the setting of Section~\ref{sec:defns}.  We
let $o$ denote the union of the classes of inessential and peripheral
curves in $S^2\setminus P_2$, and we define a \emph{slope function}
$\zs_f\co \widehat{\bQ}\to \widehat{\bQ}\cup \{o\}$ as follows.  As at
the beginning of Section~\ref{sec:pullbacks}, we fix an ordered basis
$(\zl_2,\zm_2)$ of $\zL_2$ by which we define slopes of essential,
nonperipheral simple closed curves in $S^2\setminus P_2$.  Let $p$ and
$q$ be relatively prime integers, so that $\frac{p}{q}\in
\widehat{\bQ}$.  Let $\zd$ be an essential, nonperipheral simple
closed curve in $S^2\setminus P_2$ with slope $\frac{p}{q}$.  Every
connected component of $f^{-1}(\zd)$ is in $S^2\setminus P_2$.  If no
connected component of $f^{-1}(\zd)$ is essential and nonperipheral in
$S^2\setminus P_2$, then set $\zs_f(\frac{p}{q})=o$.  Suppose that
some connected component $\za$ of $f^{-1}(\zd)$ is essential and
nonperipheral in $S^2\setminus P_2$.  In this case we let
$\zs_f(\frac{p}{q})$ be the slope of $\za$ in $S^2\setminus P_2$.
This defines $\zs_f$, independent of the choices of $\zd$ and $\za$.
The main goal of this section is to describe a method to compute
$\zs_f$.

According to statement 2 of Theorem~\ref{thm:cmpon}, it is possible to
factor $f$ as a composition $f=h\circ g$ of functions, where $g\co
S^2\to S^2$ is a Euclidean Thurston map and $h\co S^2\to S^2$ is any
orientation-preserving homeomorphism such that $h(P_1)=P_2$.  We
choose $h$ so that $h$ fixes $P_1\cap P_2$.

In this paragraph we construct four arcs in $S^2$ and their inverse
images in $\bR^2$.  Suppose that $P_1=\{x_1,x_2,x_3,x_4\}$.  For every
$k\in \{1,2,3,4\}$ let $\zb_k$ be an arc in $S^2$ which joins $x_k$
and $h(x_k)$.  Because $h$ fixes $P_1\cap P_2$, we may choose these
arcs so that they are disjoint.  Every connected component of
$q^{-1}_j(p^{-1}_j(\zb_k))$ contains exactly one element of $\zL_j$
for $j\in \{1,2\}$.  If $\zb_k$ is nontrivial, then the restriction of
$p_j\circ q_j$ to such a component is a branched covering map onto
$\zb_k$ with degree 2.  We call every such connected component a
\emph{spin mirror} for $p_j\circ q_j$.  This terminology will be
explained soon.  We emphasize that these spin mirrors depend on the
choice of four arcs in $S^2$ and these arcs depend in turn on the
homeomorphism $h$.  We furthermore assume that every spin mirror for
$p_1\circ q_1$ is a piecewise linear arc in $\bR^2$.

Let $B=\zb_1\cup \zb_2\cup \zb_3\cup \zb_4$, and let
$B_j=q_j^{-1}(p_j^{-1}(B))$ for $j\in \{1,2\}$.  Recall that $p_j\circ
q_j\co \bR^2\to S^2$ is a branched covering map ramified at exactly
the points of $\zL_j$ with local degree 2 at every such point.  Hence
the restriction of $p_1\circ q_1$ to $\bR^2\setminus B_1$ is a
covering map onto $S^2\setminus B$ which is equivalent to the covering
map obtained by restricting $p_2\circ q_2$ to $\bR^2\setminus B_2$.
This means that there exists a homeomorphism $ \zw \co \bR^2\setminus
B_1\to \bR^2\setminus B_2$ such that $p_2\circ q_2\circ \zw=p_1\circ
q_1$ as functions from $\bR^2\setminus B_1$ to $S^2\setminus B$.

In general the map $\zw$ does not extend to a continuous map from
$\bR^2$ to $\bR^2$, but nonetheless it does determine a bijection from
the set of spin mirrors in $B_1$ to the set of spin mirrors in $B_2$.
Suppose that $k\in \{1,2,3,4\}$ such that $\zb_k$ is nontrivial.  Let
$M_1$ be a spin mirror in $B_1$ which maps to $\zb_k$, and let $M_2$
be the corresponding spin mirror in $B_2$.  Both $M_1$ and $M_2$ are
branched double covering spaces of $\zb_k$ with $M_1$ branched over
the point of $P_1$ in $\zb_k$ and $M_2$ branched over the point of
$P_2$ in $\zb_k$.  Figure~\ref{fig:phi} illustrates the behavior of
$\zw$ near $M_1$.  Figure~\ref{fig:phi} is an idealized drawing.  The
spin mirrors need not be line segments, and $\zw$ need not be
piecewise linear.  We might think in terms of cutting $\bR^2$ open
along $M_1$.  We obtain a hole bounded by four arcs as in the middle
of Figure~\ref{fig:phi}.  The inverse operation is to identify two
pairs of adjacent arcs in the boundary of this hole.  To obtain $M_2$,
we identify the other two pairs of adjacent arcs.  We might imagine a
photon traveling through $\bR^2$ and crossing $M_2$ as indicated by
the dashed line segment in the rightmost part of Figure~\ref{fig:phi}.
The photon's inverse image under $\zw$ is indicated by the two dashed
line segments in the leftmost part of Figure~\ref{fig:phi}.  When the
photon's inverse image strikes $M_1$, it spins about the center of
$M_1$ and thereby reverses direction.  This property gives spin
mirrors their name.

\begin{figure}\begin{center}
\includegraphics{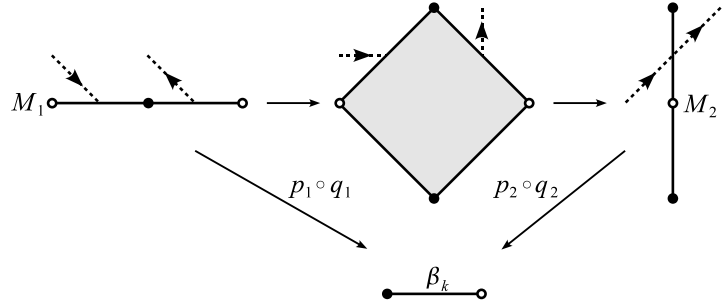} \caption{ The map $\zw$ near the
spin mirror $M_1$.}
\label{fig:phi}
\end{center}\end{figure}

Now we consider computing the slope function $\zs_f$.  Let
$\frac{p}{q}\in \widehat{\bQ}$.  Let $L$ be a line in $\bR^2\setminus
\zL_2$ with slope $\frac{p}{q}$ relative to the ordered basis
$(\zl_2,\zm_2)$ of $\zL_2$.  Then $\zd=p_2(q_2(L))$ is an essential,
nonperipheral simple closed curve in $S^2\setminus P_2$ with slope
$\frac{p}{q}$.  Theorem~\ref{thm:degscmpts} provides the means to
determine whether or not $f^{-1}(\zd)$ contains an essential,
nonperipheral component, that is, it allows us to determine whether or
not $\zs_f(\frac{p}{q})=o$.  So suppose that $f^{-1}(\zd)$
contains an essential, nonperipheral component $\za$.  Because the
lift of $f$ to $\bR^2$ is the identity map, we may, and do, assume
that $L$ is one connected component of $q_1^{-1}(p_1^{-1}(\za))$.  Let
$\zl=q \zl_2+p \zm_2$.  Let $d$ be the degree with which $f$ maps
$\za$ to $\zd$.  Theorem~\ref{thm:degscmpts} shows that $d$ is the
order of the image of $\zl$ in $\zL_2/\zL_1$.  Choose $v\in L$ so that
$v$ is not contained in a spin mirror for $p_1\circ q_1$.  Then the
segment of $L$ joining $v$ and $v+2 \zl$ is a lift to $\bR^2$ of $\zd$
under $f\circ p_1\circ q_1$ and the segment $S$ of $L$ joining $v$ and
$w=v+2d \zl$ is a lift to $\bR^2$ of $\za$ under $p_1\circ q_1$.
Recall that every spin mirror for $p_1\circ q_1$ is a piecewise linear
arc.  We choose $L$ so that its intersection with every such spin
mirror is transverse.  So $S$ meets $B_1$, the union of the spin
mirrors for $p_1\circ q_1$, transversely in finitely many points.

Let $S'$ be the lift to $\bR^2$ of $\za$ under $p_2\circ q_2$ based at
$\zw(v)$.  Suppose that $S$ meets $B_1$ in $n$ points.  Let
$S_1,\dotsc,S_{n+1}$ be the line segments in order from $v$ to $w=v+2d
\zl$ so that $S_1\cup \cdots\cup S_{n+1}=S\setminus B_1$.  Let
$S'_1,\dotsc, S'_{n+1}$ be the corresponding arcs in $S'$.  For every
$j\in\{1,\dotsc,n\}$ the closures of $S_j$ and $S_{j+1}$ meet at a
spin mirror.  Let $\zl_j\in \zL_1$ be the midpoint of this spin
mirror.

Standard covering space theory implies that $S'_1=\zw(S_1)$.  The
discussion which explains the naming of spin mirrors shows that
$S'_2=\zw(2 \zl_1-S_2)$.  Next, $S'_3=\zw(2 \zl_1-(2 \zl_2-S_3))=\zw(2
\zl_1-2 \zl_2+S_3)$.  Inductively, it follows that
  \begin{equation*}
S'_j=\zw\left((-1)^{j+1}S_j+2\sum_{i=1}^{j-1}(-1)^{i+1}\zl_i\right)
  \end{equation*}
for $j\in \{1,\dotsc,n+1\}$.  Set
$w'=(-1)^nw+2\sum_{i=1}^{n}(-1)^{i+1}\zl_i$.  It follows that
$S'$ joins $\zw(v)$ and $\zw(w')$.  So $\zs_f(\frac{p}{q})$ is the
slope of the line segment joining $\zw(v)$ and $\zw(w')$ relative to
the ordered basis $(\zl_2,\zm_2)$ of $\zL_2$.

It remains to interpret this in terms of the line segment joining $v$
and $w'$.  We use the fact that the restriction of $p_j\circ q_j$ to
$\bR^2\setminus B_j$ is a regular covering map with group of deck
transformations $\zG_j$ for $j\in \{1,2\}$.  The map $\zw$ induces a
group isomorphism from $\zG_1$ to $\zG_2$, hence a group isomorphism
from $2 \zL_1$ to $2 \zL_2$ and hence a group isomorphism from $\zL_1$
to $\zL_2$.  The map $\zw$ is not uniquely determined by the choice of
spin mirrors, but it is unique up to postcomposing with an element of
$\zG_2$.  So the isomorphism from $\zG_1$ to $\zG_2$ is unique up to
conjugation by an element of $\zG_2$.  One checks that the isomorphism
from $\zL_1$ to $\zL_2$ is therefore unique up to multiplication by
$\pm 1$.  This does not affect slopes.  So the choice of ordered basis
$(\zl_2,\zm_2)$ of $\zL_2$ together with the choice of spin mirrors
determines two ordered bases of $\zL_1$ of the form $(\zl_1,\zm_1)$
and $(-\zl_1,-\zm_1)$.  Then $\zs_f(\frac{p}{q})$ is the slope of the
line segment joining $v$ and $w'$ relative to either of these ordered
bases of $\zL_1$.  We emphasize that the correspondence between this
basis of $\zL_2$ and these two bases of $\zL_1$ involves both the
identification map $\zf\co \mathbb{R}^2/\zG_2\to \mathbb{R}^2/\zG_1$
and the choice of spin mirrors.

We have proved the following theorem.

\begin{thm}\label{thm:slopefn1} Let $f$ be a NET map in the setting of
Section~\ref{sec:defns}.  Let $\frac{p}{q}\in \widehat{\bQ}$.  Let
$\zd$ be an essential simple closed curve in $S^2\setminus P_2$ with
slope $\frac{p}{q}$ relative to the basis $(\zl_2,\zm_2)$ of $\zL_2$.
Suppose that $\za$ is an essential, nonperipheral component of
$f^{-1}(\zd)$ in $S^2\setminus P_2$.  Let $d$ be the degree with which
$f$ maps $\za$ to $\zd$.  Let $\zl=q \zl_2+p \zm_2$.  Let $v$ be any
point in $\bR^2$ such that $p_1(q_1(v))\in \za$ and $v$ is not
contained in a spin mirror for $p_1\circ q_1$.  It is possible to
choose $\zd$ so that the line segment joining $v$ and $w=v+2d \zl$ is
a lift of $\za$ to $\bR^2$ under $p_1\circ q_1$ and it intersects the
spin mirrors for $p_1\circ q_1$ transversely in finitely many points.
Let $S$ be the line segment joining $v$ and $w$.  Let
$\zl_1,\dotsc,\zl_n$ be the midpoints of the spin mirrors which meet
$S$ in order.  Then $\zs_f(\frac{p}{q})$ is the slope of the line
segment joining $v$ and $w'=(-1)^nw+2\sum_{i=1}^{n}(-1)^{i+1}\zl_i$
relative to either of the two ordered bases of $\zL_1$ determined by
$(\zl_2,\zm_2)$ and the choice of spin mirrors.
\end{thm}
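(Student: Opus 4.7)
The plan is to reduce the slope computation to an explicit lifting problem in $\mathbb{R}^2$ by using the homeomorphism $\omega\co \mathbb{R}^2\setminus B_1\to \mathbb{R}^2\setminus B_2$ satisfying $p_2\circ q_2\circ \omega = p_1\circ q_1$, and to track exactly what happens as a lift of $\alpha$ crosses the spin mirrors for $p_1\circ q_1$. By definition, $\zs_f(\tfrac{p}{q})$ is the slope of $\alpha\subset S^2\setminus P_2$; since $p_2\circ q_2$ is the covering map appropriate to that slope, I will obtain $\zs_f(\tfrac{p}{q})$ by lifting $\alpha$ through $p_2\circ q_2$ and reading its slope in $\zL_2$, then translating back to $\zL_1$.

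First I would lift $\alpha$ through $p_1\circ q_1$. Because the lift of $f$ to $\mathbb{R}^2$ is the identity, the lift $L$ of $\zd$ is simultaneously a lift of $\alpha$; by Theorem~\ref{thm:degscmpts}, $\alpha$ closes up after traveling $2d\zl$, so the segment $S$ from $v$ to $w=v+2d\zl$ is a lift of $\alpha$. Next, by choosing the line $L$ generic I can assume $S$ meets the spin mirrors transversely, in finitely many points, giving the decomposition $S=S_1\cup\cdots\cup S_{n+1}$ with consecutive segments separated by spin mirrors of midpoints $\zl_1,\ldots,\zl_n$.

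The key step, and the main obstacle, is understanding $\omega$ at a spin mirror. Cutting $\mathbb{R}^2$ open along a spin mirror $M_1$ produces a hole bounded by four arcs; the pairings of these arcs that close up to $B_1$ (branched over a point of $P_1$) and to $B_2$ (branched over the corresponding point of $P_2$) are the two inequivalent pairings. A local analysis near $M_1$ shows that the two branched covers of $\zb_k$ are related by the $180^\circ$ rotation $x\mapsto 2\zl_i-x$ about the spin mirror's midpoint: a path on the $\zG_1$ side that crosses $M_1$ lifts, under $\omega$, to a path on the $\zG_2$ side that is reflected through $\zl_i$. This is precisely what gives spin mirrors their name, and it is the only nontrivial input. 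Once this is in hand, an easy induction on $j$ yields
\begin{equation*}
S'_j=\zw\!\left((-1)^{j+1}S_j+2\sum_{i=1}^{j-1}(-1)^{i+1}\zl_i\right),
\end{equation*}
so the lift $S'$ of $\alpha$ through $p_2\circ q_2$ starting at $\zw(v)$ ends at $\zw(w')$ with $w'=(-1)^n w+2\sum_{i=1}^n(-1)^{i+1}\zl_i$. Hence the slope of $\alpha$ in the coordinates inherited from $\zL_2$ coincides with the slope of the segment from $v$ to $w'$ measured in the $\omega$-pulled-back coordinates.

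It remains to convert this slope back to one measured in an intrinsic basis of $\zL_1$. I would note that $\omega$ is a lift of the identity covering-map equivalence over $S^2\setminus B$, so it conjugates $\zG_1$ to $\zG_2$ and consequently induces a group isomorphism $\zL_1\to\zL_2$; the ambiguity in the choice of $\omega$ is postcomposition with an element of $\zG_2$, which conjugates the isomorphism and therefore only alters it by $\pm 1$ on $\zL_1$. Thus the ordered basis $(\zl_2,\zm_2)$ of $\zL_2$ pulls back, once the spin mirrors are fixed, to a basis of $\zL_1$ determined up to sign. Measuring the slope of the segment from $v$ to $w'$ in either of these bases yields the same element of $\widehat{\bQ}$, which is $\zs_f(\tfrac{p}{q})$. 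This completes the proposed proof.
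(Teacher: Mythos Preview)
Your proposal is correct and follows essentially the same route as the paper's own argument: lift $\za$ through $p_1\circ q_1$ to the segment $S$, use the homeomorphism $\zw$ and the spin-mirror rotation $x\mapsto 2\zl_i-x$ to inductively track the $p_2\circ q_2$-lift segment by segment, obtain the endpoint $\zw(w')$, and finally translate the slope back via the $\zw$-induced isomorphism $\zL_1\to\zL_2$ (well defined up to sign). The paper presents this reasoning in the discussion immediately preceding the theorem, and your sketch matches it step for step.
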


\begin{remark}\label{remark:divby2}  If the spin mirrors for
$p_1\circ q_1$ are invariant under translation by the elements of
$\zL_1$, as in the main example, then the element $w$ in
Theorem~\ref{thm:slopefn1} may be taken to be $v+d \zl$ instead of
$v+2d \zl$.
\end{remark}

\begin{figure}\begin{center}
\includegraphics{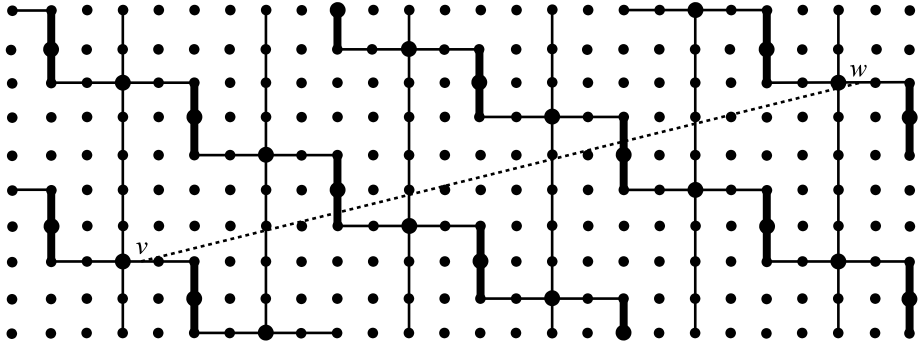}
\caption{ Spin mirrors for $p_1\circ q_1$ of the main
example.}  \label{fig:mainmirs}
\end{center}\end{figure}

\begin{figure}\begin{center}
\includegraphics{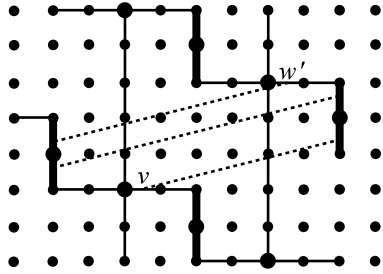} \caption{ Illustrating Theorem~\ref{thm:slopefn1}.}
\label{fig:mainzigzag}
\end{center}\end{figure}

Figures~\ref{fig:mainmirs} and \ref{fig:mainzigzag} illustrate
Theorem~\ref{thm:slopefn1} for the main example.
Figure~\ref{fig:mainmirs} shows the tiling of $\bR^2$ by the
$\zG_1$-translates of the fundamental domain for $\zG_1$ shown in
Figure~\ref{fig:maintess}.  The spin mirrors for $p_1\circ q_1$ are
drawn with thick line segments.  We take the standard basis
$\zl_2=(1,0)$ and $\zm_2=(0,1)$ for $\zL_2=\bZ^2$.  We choose
$\frac{p}{q}=\frac{1}{4}$.  Hence $\zl=(4,1)$, and we may take
$\zm=(1,0)$.  Table~\ref{tab:mainex} shows for this slope that $d=5$
and $c_1=c_2=0$.  Thus we may take $v=\frac{1}{2}\zm
=(\frac{1}{2},0)$.  By Remark~\ref{remark:divby2} we may take $w=v+d
\zl=(\frac{41}{2},5)$.  The dashed line segment in
Figure~\ref{fig:mainmirs} joins $v$ and $w$ and has slope
$\frac{1}{4}$.  It meets two spin mirrors.  The resulting spin
reflections are shown in Figure~\ref{fig:mainzigzag}.  Since the
ordered basis of $\zL_1$ consisting of $(2,-1)$ and $(0,5)$
corresponds to the ordered basis of $\zL_2$ consisting of $(1,0)$ and
$(0,1)$, it follows that $\zs_f(\frac{1}{4})$ is the slope of the line
through $v=(\frac{1}{2},0)$ and $w'=(\frac{9}{2},3)$ relative to the
basis $(2,-1)$ and $(0,5)$ of $\zL_1$.  Thus
$\zs_f(\frac{1}{4})=\frac{1}{2}$.

Theorem~\ref{thm:slopefn1} provides a way to compute the slope
function as in the previous paragraph, but the method leaves something
to be desired.  Although it might not be obvious, the next theorem
provides an improvement.

\begin{thm}\label{thm:slopefn2} Let $f$ be a NET map in the setting of
Section~\ref{sec:defns}.  Let $\frac{p}{q}\in \widehat{\bQ}$.  As in
Theorem~\ref{thm:slopefn1}, let $\zl=q \zl_2+p \zm_2$ and let $\zm$ be
an element of $\zL_2$ such that $\zl$ and $\zm$ form a basis of
$\zL_2$.  Also let $c_1$, $c_2$, $c_3$, $c_4$ be the coset numbers for
$q_1^{-1}(p_1^{-1}(P_2))$ relative to $\zl$ and $\zm$.  We assume that
$\zs_f(\frac{p}{q})\ne o$, equivalently, $c_2\ne c_3$ by
Theorem~\ref{thm:degscmpts}.  Let $L$ be a line in $\bR^2$ which has a
$\zG_1$-translate given in parametric form by either $(x,y)=t \zl+c_2
\zm$ or $(x,y)=t \zl+c_3 \zm$.  Let $v$ and $w$ be distinct elements
of $L\cap q_1^{-1}(p_1^{-1}(P_2))$ such that no element of
$q_1^{-1}(p_1^{-1}(P_1\cup P_2))$ is strictly between $v$ and $w$.
Let $S$ be the closed line segment which joins $v$ and $w$.  We assume
that the interior of $S$ intersects the spin mirrors for $p_1\circ
q_1$ transversely in finitely many points.  Let $\zl_1,\dotsc,\zl_n$
be the midpoints of these spin mirrors which meet the interior of $S$
in order.  Since $v,w\in q_1^{-1}(p_1^{-1}(P_2))$, both $v$ and $w$
are contained in spin mirrors for $p_1\circ q_1$.  Let $\zl_0$ and
$\zl_{n+1}$ be the midpoints of these two spin mirrors.  Then
$\zs_f(\frac{p}{q})$ is the slope of the line segment joining 0 and
$\sum_{i=0}^{n}(-1)^i(\zl_{i+1}-\zl_i)$ relative to either of the two
ordered bases of $\zL_1$ determined by $(\zl_2,\zm_2)$ and the choice
of spin mirrors.
\end{thm}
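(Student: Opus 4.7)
The plan is to reduce Theorem~\ref{thm:slopefn2} to Theorem~\ref{thm:slopefn1} via a perturbation argument, replacing the degenerate segment $S = [v,w]$ (which has its endpoints on spin mirrors and hence does not directly satisfy the hypotheses of Theorem~\ref{thm:slopefn1}) by a length-$2d\zl$ segment $S^*$ on a nearby parallel line inside the essential strip.

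First I would let $L_\epsilon$ denote the line parallel to $L$ with $\zm$-offset $c_2+\epsilon$ for small $\epsilon>0$. Statement~5 of Theorem~\ref{thm:degscmpts} guarantees that $L_\epsilon$ is a lift under $p_1\circ q_1$ of an essential nonperipheral component of $f^{-1}(\zd)$. Choose $v^* \in L_\epsilon$ close to $v$ but on the essential side of the spin mirror whose midpoint is $\zl_0$, and set $w^* = v^* + 2d\zl$. Since $d\zl \in \zL_1$, translation by $2d\zl$ is in $2\zL_1 \subseteq \zG_1$ and therefore preserves the spin mirror pattern, so $S^* = [v^*, w^*]$ covers one period of the spin mirror configuration along $L_\epsilon$ and has its endpoints off the spin mirrors; hence Theorem~\ref{thm:slopefn1} applies to $S^*$.

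Second, I would enumerate the spin mirrors crossed by $S^*$. Using the $\zG_1$-equivariance of the spin mirror pattern together with the description of $q_1^{-1}(p_1^{-1}(P_2))\cap L$ implicit in Theorem~\ref{thm:degscmpts}, the crossings of $S^*$ organize into groups corresponding to the sub-intervals of $L$ between consecutive points of $q_1^{-1}(p_1^{-1}(P_2))$: each such sub-interval contributes $n$ interior crossings (at $\zG_1$-translates of $\zl_1,\dotsc,\zl_n$) plus one boundary crossing (at a $\zG_1$-translate of $\zl_{n+1}$). The hypothesis of Theorem~\ref{thm:slopefn2} that no element of $q_1^{-1}(p_1^{-1}(P_1\cup P_2))$ lies strictly between $v$ and $w$ on $L$ is what ensures the midpoints $\zl_0$ and $\zl_{n+1}$ play the role of the boundary midpoints exactly as stated, with no intermediate spin mirrors in the way.

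Finally, apply the formula of Theorem~\ref{thm:slopefn1} to $S^*$ and simplify using $\zG_1$-equivariance. The telescoping character of the composition of reflections in Theorem~\ref{thm:slopefn1}, combined with the constraint that this composition is a translation (because $\omega(S^*)$ descends to a closed curve in $T_2$), collapses the sum over the sub-intervals to something proportional to $\sum_{i=0}^n (-1)^i(\zl_{i+1}-\zl_i)$. The main obstacle is this final algebraic simplification, in particular: (i) correctly identifying the $\zG_1$-translates relating the boundary midpoints $\zl_0$ and $\zl_{n+1}$ across sub-intervals, and (ii) handling the parity of the total crossing count so that the $(-1)^N$ factor combines with the $\zG_1$-translate signs to produce the claimed slope, independent of the particular choice of $v^*$ on $L_\epsilon$.
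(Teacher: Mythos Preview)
Your perturbation idea---passing to a nearby line $L_\epsilon$ in the essential strip via statement~5 of Theorem~\ref{thm:degscmpts}---is exactly how the paper begins. But from there you diverge in a way that creates real difficulty.

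You insist on applying Theorem~\ref{thm:slopefn1} verbatim, which forces you to work with a full-period segment $S^*=[v^*,v^*+2d\zl]$ and then collapse the resulting alternating sum back down to the segment $[v,w]$. Your step~2 claims that the spin-mirror crossings along $S^*$ break into blocks, one per sub-interval of $L$ between consecutive points of $q_1^{-1}(p_1^{-1}(P_2))$, and that \emph{each} such block is a $\zG_1$-translate of the pattern $\zl_0,\zl_1,\dotsc,\zl_{n+1}$ on $[v,w]$. This is not justified and is generally false: the hypothesis that no point of $q_1^{-1}(p_1^{-1}(P_1\cup P_2))$ lies strictly between $v$ and $w$ controls only the one sub-interval $[v,w]$; other sub-intervals within one $2d\zl$-period of $L$ may contain points of $q_1^{-1}(p_1^{-1}(P_1))$, may arise from a different element of $P_2$ sharing the coset number $c_2$, and in any case need not be $\zG_1$-equivalent to $[v,w]$. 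So the ``telescoping'' you anticipate does not have the uniform block structure you assume, and the reduction is not completed.

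The paper avoids this entirely by a shortcut you are missing: it never invokes the full-period statement of Theorem~\ref{thm:slopefn1}, but instead applies the spin-mirror discussion \emph{preceding} that theorem directly to the short segment. The point is that since $p_1(q_1(v)),p_1(q_1(w))\in P_2$ and no point of $P_1\cup P_2$ lies strictly between, the interior of $p_1(q_1(S))$ maps injectively into $S^2\setminus P_2$. Hence a single lift $S'$ of $p_1(q_1(S))$ via $p_2\circ q_2$ already has endpoints in $\zL_2$ whose difference determines $\zs_f(\tfrac{p}{q})$; nearby points $v_\epsilon,w_\epsilon\in L_\epsilon$ lift to points differing by that same element of $\zL_2$. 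One then runs the spin-mirror bookkeeping on $S$ alone, with the sole modification that the endpoints $v,w$ lie on spin mirrors and must be replaced by the midpoints $\zl_0,\zl_{n+1}$. This yields the stated formula immediately, with no period-long sum to collapse.
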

  \begin{proof} It suffices to prove the theorem for the case in which
$L$ is given in parametric form by either $(x,y)=t \zl+c_2 \zm$ or
$(x,y)=t \zl+c_3 \zm$, and so we assume that $L$ has this form.  Let
$\ze$ be a positive real number.  If $L$ is given in parametric form
by $(x,y)=t \zl+c_2 \zm$, then let $L_\ze$ be the line with parametric
form $(x,y)=t \zl+(c_2+\ze)\zm$.  If $L$ is given in parametric form
by $(x,y)=t \zl+c_3 \zm$, then let $L_\ze$ be the line with parametric
form $(x,y)=t \zl+(c_3-\ze)\zm$.  Statement 5 of
Theorem~\ref{thm:degscmpts} shows that if $\ze$ is small enough, then
$p_1(q_1(L_\ze))$ is an essential, nonperipheral simple closed curve
in $S^2\setminus P_2$.  Of course, $f$ maps $p_1(q_1(L_\ze))$ to an
essential simple closed curve in $S^2\setminus P_2$ with slope
$\frac{p}{q}$.  So $\zs_f(\frac{p}{q})$ is the slope of
$p_1(q_1(L_\ze))$ in $S^2\setminus P_2$.

The assumptions imply that the interior of $p_1(q_1(S))$ avoids the
branch points of $p_2\circ q_2$ and maps injectively to $S^2\setminus
P_2$.  Let $S'$ be a lift of $p_1(q_1(S))$ to $\bR^2$ under $p_2\circ
q_2$.  It follows that there exists $v_\ze\in L_\ze$ near $v$ and
$w_\ze\in L_\ze$ near $w$ such that $p_1(q_1(v_\ze))$ and
$p_1(q_1(w_\ze))$ lift under $p_2\circ q_2$ to points near the
endpoints of $S'$ and these lifts differ by the same nontrivial
element of $\zL_2$.  Expressing this element of $\zL_2$ in terms of
the basis $(\zl_2,\zm_2)$ determines $\zs_f(\frac{p}{q})$.  If $\ze$
is small enough, then computing this element of $\zL_2$ from $S$,
which joins $v$ and $w$, is the same as computing this element of
$\zL_2$ from the segment of $L_\ze$ which joins $v_\ze$ and $w_\ze$.

The discussion before Theorem~\ref{thm:slopefn1} now essentially
completes the proof of Theorem~\ref{thm:slopefn2}.  The only
difference now is that the endpoints $v$ and $w$ of $S$ lie in spin
mirrors for $p_1\circ q_1$.  Computing the relevant element of $\zL_1$
by means of the isomorphism between $\zL_1$ and $\zL_2$ requires
replacing $v$ and $w$ by the midpoints of the spin mirrors which
contain them.  Hence $\zs_f(\frac{p}{q})$ is the slope of the line
segment joining $\zl_0$ and
$(-1)^n \zl_{n+1}+2\sum_{i=1}^{n}(-1)^{n+1}\zl_i$ relative to either of
the two ordered bases of $\zL_1$ determined by $(\zl_2,\zm_2)$ and the
choice of spin mirrors.  This is equivalent to the desired conclusion.

This proves Theorem~\ref{thm:slopefn2}.

\end{proof}

One advantage of Theorem~\ref{thm:slopefn2} over
Theorem~\ref{thm:slopefn1} is that in Theorem~\ref{thm:slopefn2} both
$v$ and $w$ are in $\zL_2$, whereas in Theorem~\ref{thm:slopefn1}
neither is.  Another advantage is that in Theorem~\ref{thm:slopefn2}
the line segment joining $v$ and $w$ is shorter than the one in
Theorem~\ref{thm:slopefn1}, resulting in a shorter computation.

We next show for the main example that Theorem~\ref{thm:slopefn2}
provides an algorithm for computing the slope function which is easy
to implement by computer and which can even be used by hand in simple
cases.  This will occupy the rest of this section.

\begin{table}
\begin{tabular}{|c|c|c|c|}\hline
$q$ mod 4 & $2p+q$ mod 5 & $v$ & $w$ \\ \hline
0 & 0 & $(0,0)$ & $(q,p)$  \\ \hline
0 & $\pm 1$, $\pm 2$ & $(0,0)$ & $(5q,5p)$  \\ \hline
2 & 0 & $(2,0)$ & $(2+2q,2p)$  \\ \hline
2 & $\pm 1$ & $(0,0)$ & $(3q,3p)$  \\ \hline
2 & $\pm 2$ & $(0,0)$ & $(q,p)$  \\ \hline
$\pm 1$ & 0 & $(2,0)$ & $(2+4q,4p)$  \\ \hline
$\pm 1$ & $\pm 1$ & $(0,0)$ & $(2q,2p)$  \\ \hline
$\pm 1$ & $\pm 2$ & $(0,0)$ & $(6q,6p)$  \\ \hline
\end{tabular}
\smallskip
\caption{ Choosing $v$ and $w$ for the main example.}
\label{tab:mainvw}
\end{table}

Let $p$ and $q$ be relatively prime integers.  We wish to compute
$\zs_f(\frac{p}{q})$ for the main example using
Theorem~\ref{thm:slopefn2}.  Table~\ref{tab:mainex} shows that $c_2\ne
c_3$, so $\zs_f(\frac{p}{q})\ne o$.  We begin by choosing appropriate
lattice points $v$ and $w$ as in Theorem~\ref{thm:slopefn2}.  Since
$\zL_2=\bZ^2$ for the main example, $v$ and $w$ are simply standard
lattice points in $\bR^2$.  Table~\ref{tab:mainvw} gives our choices.
The first column gives $q$ modulo 4.  The second column gives $2p+q$
modulo 5.  The last two columns give $v$ and $w$.

We justify our choices of $v$ and $w$ beginning with this paragraph.
Our basis for $\zL_2$ is the standard basis $\zl_2=(1,0)$ and
$\zm_2=(0,1)$.  Hence $\zl=(q,p)$.  Moreover,
$\zL_1=\left<(2,-1),(0,5)\right>$.  The set $q_1^{-1}(p_1^{-1}(P_1\cup
P_2))$ is a union of cosets of $2 \zL_1$ in $\zL_2$, and the following
elements are distinct representatives for these cosets.
  \begin{equation*}
(0,0), (0,5), (2,-1), (2,4), (2,0), (2,-2), (2,3), (2,5)
  \end{equation*}
The first two coset representatives are in $q_1^{-1}(p_1^{-1}(P_1\cap
P_2))$, the next two are in $q_1^{-1}(p_1^{-1}(P_1\setminus
P_2))$ and the last four are in $q_1^{-1}(p_1^{-1}(P_2\setminus
P_1))$.

We first determine all cases in which it is possible to choose
$v=(0,0)$.  We see that $(0,0)\in q_1^{-1}(p_1^{-1}(P_2))$, as
required by Theorem~\ref{thm:slopefn2}.  We also need $(0,0)$ to be in
the line $L$ of Theorem~\ref{thm:slopefn2}.  This is equivalent to the
condition that $c_2=0$.  Table~\ref{tab:mainex} shows that this is in
turn equivalent to the condition that $q\equiv 0 \mod 4$ if
$2p+q\equiv 0\mod 5$.

With $v=(0,0)$ the element $w$ is an integer multiple of $\zl=(q,p)$,
and without loss of generality we take this integer $x$ to be
positive.  We want $w$ to be in $q_1^{-1}(p_1^{-1}(P_2))$ with no
element of $q_1^{-1}(p_1^{-1}(P_1\cup P_2))$ between $v$ and $w$.  Let
$(r,s)$ be one of our eight coset representatives.  We are interested
in the congruence $(r,s)\equiv x(q,p)\mod 2 \zL_1$.  Hence we are
interested in integers $y$ and $z$ such that
  \begin{equation*}
(r,s)=x(q,p)+y(4,-2)+z(0,10).
  \end{equation*}
The following equations give $y$ and $z$ as rational numbers.
  \begin{equation*}
y=\frac{1}{4}(r-xq)\quad\text{and}\quad z=\frac{1}{20}(2s+r-x(2p+q))
  \end{equation*}
Thus $y$ and $z$ are integers if and only if the following three
congruences are satisfied.
  \begin{equation*}\linnum\label{lin:maincnges}
r\equiv xq\mod 4\quad s\equiv xp\mod 2\quad 2s+r\equiv x(2p+q)\mod 5
  \end{equation*}
So, assuming that $q\equiv 0\mod 4$ if $2p+q\equiv 0\mod 5$, then we
may take $v=(0,0)$ and $w=x(q,p)$, where $x$ is the smallest positive
integer which satisfies line~\ref{lin:maincnges} for some choice of
$(r,s)$.

First suppose that $q\equiv 0\mod 4$.  Line~\ref{lin:maincnges}
implies that $r\equiv 0\mod 4$.  Thus either $(r,s)=(0,0)\in \zL_1$ or
$(r,s)=(0,5)\in \zL_1$.  It follows that $x$ is the order of the image
of $\zl=(q,p)$ in $\zL_2/\zL_1$.  Theorem~\ref{thm:degscmpts} and
Table~\ref{tab:mainex} now imply that $x=1$ if $2p+q\equiv 0\mod 5$
and $x=5$ if $2p+q\equiv \pm 1,\pm 2\mod 5$.  This gives the first two
lines of Table~\ref{tab:mainvw}.

Next suppose that $q\equiv 2\mod 4$.  Then $p\equiv 1\mod 2$.  We
consider solutions to line~\ref{lin:maincnges} with $x=1$.  There is
such a solution if and only if $r\equiv 2\mod 4$, $s\equiv 1\mod 2$
and $2s+r\equiv 2p+q\mod 5$.  These congruences have a solution with
$(r,s)\in q_1^{-1}(p_1^{-1}(P_2))$ if and only if $(r,s)\in
\{(2,3),(2,5)\}$ and $2p+q\equiv \pm 2\mod 5$.  This obtains line 5 of
Table~\ref{tab:mainvw}.  If $x=2$, then line~\ref{lin:maincnges} shows
that $r\equiv 0\mod 4$ and $s\equiv 0\mod 2$.  Hence $(r,s)=(0,0)$.
Hence $2(q,p)\in 2 \zL_1$, hence $(q,p)\in \zL_1$ and so
line~\ref{lin:maincnges} has a solution with $x=1$.  Thus there is no
acceptable value of $w$ with $x=2$.  Finally, we verify that
line~\ref{lin:maincnges} always has a solution for $x=3$, $q\equiv
2\mod 4$ and $2p+q\equiv \pm 1\mod 5$ by taking $(r,s)\in
\{(2,3),(2,5)\}$.  This obtains line 4 of Table~\ref{tab:mainvw}.

Next suppose that $q\equiv \pm 1\mod 4$.  If $x$ is odd, then the first
congruence in line~\ref{lin:maincnges} shows that $r$ is also odd.
This is impossible.  So $x$ is even.  We now proceed as in the last
paragraph to obtain lines 7 and 8 of Table~\ref{tab:mainvw}.

We have thus far handled every case in which it is possible to choose
$v=(0,0)$.  We are left with the values of $p$ and $q$ for which
$q\not\equiv 0\mod 4$ and $2p+q\equiv 0\mod 5$.
Table~\ref{tab:mainex} shows that these are precisely the cases in
which $c_1<c_2<c_3<c_4$.  In this situation the elements of
$q_1^{-1}(p_1^{-1}(P_1\cap P_2))$ correspond to $c_1$ and $c_4$ while
the elements of $q_1^{-1}(p_1^{-1}(P_2\setminus P_1))$ correspond to
$c_2$ and $c_3$.  Thus we may choose the line $L$ of
Theorem~\ref{thm:slopefn2} so that it contains $(2,0)$.  As for
line~\ref{lin:maincnges}, replacing $(0,0)$ by $(2,0)$ has the effect
of replacing $r$ by $r-2$ to obtain the following.
  \begin{equation*}
r-2\equiv xq\mod 4\quad s\equiv xp\mod 2\quad 2s+r-2\equiv x(2p+q)\mod 5
  \end{equation*}

First suppose that $q\not\equiv 0\mod 4$, $2p+q\equiv 0\mod 5$ and
$x\equiv 1\mod 2$.  Then $r\not\equiv 2\mod 4$.  Hence $(r,s)\in
\{(0,0),(0,5)\}\subseteq \zL_1$.  As discussed in the previous
paragraph, the elements of $\zL_1$ correspond to $c_1$ and $c_4$ not
$c_2$ or $c_3$.  Thus $x\equiv 0\mod 2$.  Now we verify that the
congruences in the last display are solved by choosing $q\equiv 2\mod
4$, $2p+q\equiv 0\mod 5$, $x=2$ and $(r,s)=(2,0)$.  On the other hand,
if $q\equiv \pm 1\mod 4$, $2p+q\equiv 0\mod 5$ and $x=2$, then
$r\equiv 0\mod 4$ and $s\equiv 0\mod 2$, hence $(r,s)=(0,0)$ and so
$2s+r-2\equiv -2\not\equiv 0\equiv x(2p+q)\mod 5$.  Thus there is no
solution in this case.  Finally, taking $q\equiv \pm 1\mod 4$,
$2p+q\equiv 0\mod 5$, $x=4$ and $(r,s)=(2,0)$ gives a solution.  This
completes the verification of Table~\ref{tab:mainvw}.

Now that we have $v$ and $w$, we find the lattice points
$\zl_1,\dotsc,\zl_n$ which appear in Theorem~\ref{thm:slopefn2}.  A
point $(x,y)\in \bR^2$ is the center of a spin mirror for $p_1\circ
q_1$ if and only if $x$ is an integer congruent to 2 modulo 4 and
there exists an integer $Q_x$ such that $x+2y=10Q_x$.  A point
$(x,y)\in \bR^2$ is in a spin mirror for $p_1\circ q_1$ if and only if
$x$ is an integer congruent to 2 modulo 4 and there exists an integer
$Q_x$ and a real number $R_x$ with $\left|R_x\right|\le 2$ such that
$x+2y=10Q_x+R_x$.

So suppose that $v=(0,0)$.  Let $w=(w_1,w_2)$.  The line segment
joining $v$ and $w$ is in the line given by $y=\frac{p}{q}x$.  For
every integer $x$ define $Q_x$ and $R_x$ so that
  \begin{equation*}
\left(1+\frac{2p}{q}\right)x=10Q_x+R_x\quad
\text{with }Q_x\in \bZ,R_x\in \bQ\text{ and }-5<R_x\le 5.
  \end{equation*}
Let $0<x_1<x_2<x_3<\cdots <x_n<w_1$ be those integers congruent to 2
modulo 4 such that $\left|R_{x_i}\right|<2$.  Set $x_0=0$ and
$x_{n+1}=w_1$.  If $\zl_i$ is the center of the $i$th spin mirror as
in Theorem~\ref{thm:slopefn2}, then
  \begin{equation*}
\zl_i=(x_i,\tfrac{10Q_{x_i}-x_i}{2})=
\tfrac{1}{2}x_i(2,-1)+Q_{x_i}(0,5)\quad\text{for }i\in
\{0,\dotsc,n+1\}.
  \end{equation*}
Set
  \begin{equation*}
N=\sum_{i=0}^{n}(-1)^i(Q_{x_{i+1}}-Q_{x_i})
  \end{equation*}
and
  \begin{equation*}
D=\frac{1}{2}\sum_{i=0}^{n}(-1)^i(x_{i+1}-x_i).
  \end{equation*}
Assembling what we have, Theorem~\ref{thm:slopefn2} implies that
  \begin{equation*}
\zs_f\left(\frac{p}{q}\right)=\frac{N}{D}.
  \end{equation*}

The situation is similar if $v=(2,0)$.  Again let $w=(w_1,w_2)$.  The
line segment joining $v$ and $w$ is in the line given by
$y=\frac{p}{q}x-\frac{2p}{q}$.  For every integer $x$ define $Q_x$ and
$R_x$ so that
  \begin{equation*}
\left(1+\frac{2p}{q}\right)x-\frac{4p}{q}=10Q_x+R_x\quad
\text{with }Q_x\in \bZ,R_x\in \bQ\text{ and }-5<R_x\le 5.
  \end{equation*}
Let $2<x_1<x_2<x_3<\cdots <x_n<w_1$ be those integers congruent to 2
modulo 4 such that $\left|R_{x_i}\right|<2$.  Set $x_0=2$ and
$x_{n+1}=w_1$.  Set
  \begin{equation*}
N=\sum_{i=0}^{n}(-1)^i(Q_{x_{i+1}}-Q_{x_i})
  \end{equation*}
and
  \begin{equation*}
D=\frac{1}{2}\sum_{i=0}^{n}(-1)^i(x_{i+1}-x_i).
  \end{equation*}
Then
  \begin{equation*}
\zs_f\left(\frac{p}{q}\right)=\frac{N}{D}.
  \end{equation*}

We illustrate this formula for $\zs_f(\frac{p}{q})$ by calculating
$\zs_f(\frac{3}{2})$.  Table~\ref{tab:mainvw} shows that $v=(0,0)$ and
$w=(2,3)$.  Since $(1+\frac{2\cdot 3}{2})\cdot 2=10\cdot 1-2$, we
simply have that $n=0$, $x_0=0$, $x_1=2$, $Q_{x_0}=0$ and
$Q_{x_1}=1$.  So $N=Q_{x_1}-Q_{x_0}=1$ and
$D=\frac{1}{2}(x_1-x_0)=1$.  Therefore
$\zs_f(\frac{3}{2})=\frac{N}{D}=1$.

The formulas just derived for the slope function $\zs_f$ of the main
example were used to create the graph of $\zs_f$ shown in
Figure~\ref{fig:slopes256}.  The prominent horizontal lines in the
graph indicate that $\zs_f$ is often infinite-to-one, which can be
proved using the functional equations in Sections~\ref{sec:dehn},
\ref{sec:rflns} and \ref{sec:fnleqns}.  The less prominent vertical
lines indicate that $\zs_f$ does not extend continuously to the
Thurston boundary, which can also be proved using functional
equations.  The prominent fuzzy line with positive slope less than 1
indicates that iteration of $\zs_f$ is probably contracting.  It
seems very possible that under iteration of $\zs_f$, every element
of $\widehat{\mathbb{Q}}$ eventually lies in a finite set of cycles,
although we do not know this.  See Remark~\ref{remark:graph} for an
equation of the prominent fuzzy line.

  \begin{figure}
\centerline{\includegraphics{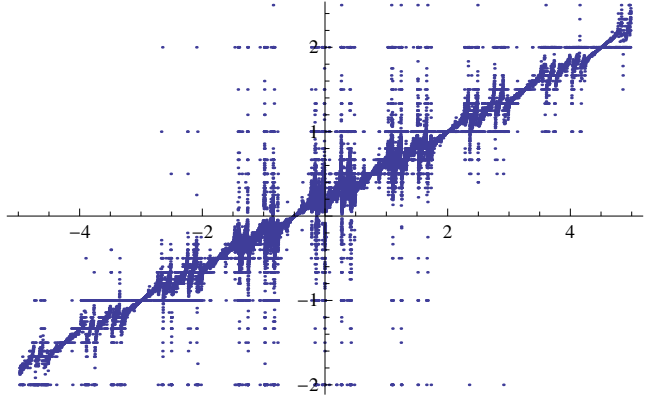}} \caption{The graph of
$\zs_f$ for the main example.}
\label{fig:slopes256}
  \end{figure}

\section{Horoballs in Teichm\"{u}ller space }
\label{sec:horoballs}\nosubsections

We maintain the setting of Section~\ref{sec:defns}.  Let $\bH$ denote
the upper half complex plane, so that $\bH=\{z\in
\bC:\text{Im}(z)>0\}$.  We identify $\bH$ with the Teichm\"{u}ller
space of $S^2\setminus P_2$ as follows.  Let $\zt\in \bH$.  Recall
that we have chosen an ordered basis $(\zl_2,\zm_2)$ of the lattice
$\zL_2$.  There exists a unique $\bR$-linear isomorphism $\zv\co
\bC\to \bR^2$ so that $\zv(1)=2 \zl_2$ and $\zv(\zt)=2 \zm_2$.  The
map $p_2\circ q_2\circ \zv\co \bC\to S^2\setminus P_2$ induces a
complex structure on $S^2\setminus P_2$.  As $\zt$ varies over $\bH$,
the resulting isotopy classes of complex structures on $S^2\setminus
P_2$ are distinct, and every complex structure on $S^2\setminus P_2$
is isotopic to one of them.  In this way we regard $\bH$ as the
Teichm\"{u}ller space of $S^2\setminus P_2$.

In this section we relate horoballs in $\bH$ to moduli of curve
families.  We turn to a determination of convenient equations for the
horocycles in $\bH$.

The horocycles in $\bH$ at $\infty$ are simply horizontal lines, which
are given by equations of the form $\text{Im}(z)=m$ for positive real
numbers $m$.  Now let $p$ and $q$ be relatively prime integers with
$q\ne 0$.  We consider horocycles in $\bH$ at $\frac{p}{q}$.
Figure~\ref{fig:horocycle} shows a horocycle in $\bH$ at $\frac{p}{q}$
with Euclidean diameter $D$.  Two similar right triangles also appear
in Figure~\ref{fig:horocycle}, from which we conclude the following.
  \begin{equation*}
\frac{B}{A}=\frac{A}{D}\Longleftrightarrow \frac{B}{A^2}=\frac{1}{D}
\Longleftrightarrow
\frac{\text{Im}(z)}{\left|z-p/q\right|^2}=\frac{1}{D}
\Longleftrightarrow \frac{\text{Im}(z)}{\left|qz-p\right|^2}=m
  \end{equation*}
Here $m=\frac{1}{q^2D}$ and $D=\frac{1}{q^2m}$.  Thus if $p$ and $q$
are relatively prime integers, then the horoballs in $\bH$ at
$\frac{p}{q}\in \widehat{\bQ}$ are the subsets of the form $\{z\in
\bH:\frac{\text{Im}(z)}{\left|qz-p\right|^2}>m\}$ for positive real
numbers $m$.

\begin{figure}\begin{center} \includegraphics{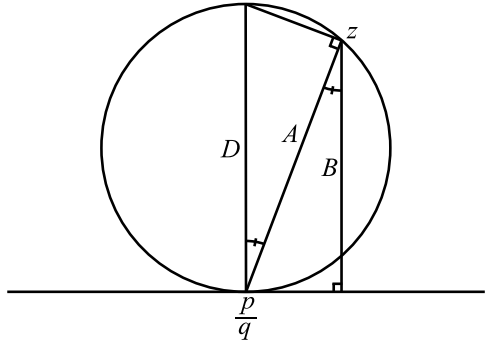}
\caption{ Finding an equation for a horocycle.}
\label{fig:horocycle}
\end{center}\end{figure}

We view $\text{PSL}(2,\bZ)$ as a subgroup of the group of
orientation-preserving isometries of $\bH$, that is,
  \begin{equation*}
\text{PSL}(2,\bZ)=\{z\mapsto \frac{az+b}{cz+d}:\left[\begin{matrix}a & b \\ c & d \end{matrix}\right]\in \text{SL}(2,\bZ)\}.
  \end{equation*}
If $\left[\begin{smallmatrix}a & b \\ c & d \end{smallmatrix}\right]$
is an element of $\text{GL}(2,\bZ)$ with determinant $-1$, then
$z\mapsto \frac{a\overline{z}+b}{c \overline{z}+d}$ is an
orientation-reversing isometry of $\bH$.  This allows us to view
$\text{PGL}(2,\bZ)$ as a subgroup of the group of isometries of $\bH$.
We next consider the action of $\text{PGL}(2,\bZ)$ on the horoballs of
$\bH$.

\begin{lemma}\label{lemma:horocycle}  Let $\zv\in
\text{PGL}(2,\bZ)$, let $\frac{p}{q}\in \widehat{\bQ}$ and suppose
that $\zv(\frac{p}{q})=\frac{p'}{q'}\in \widehat{\bQ}$.  Then
  \begin{equation*}
\frac{\text{Im}(\zv(z))}{\left|q'
\zv(z)-p'\right|^2}=\frac{\text{Im}(z)}{\left|qz-p\right|^2}.
  \end{equation*}
\end{lemma}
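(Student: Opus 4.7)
The plan is a direct matrix calculation after writing $\zv$ explicitly. Because $\zv\in \text{PGL}(2,\bZ)$, either $\zv(z) = (az+b)/(cz+d)$ with $ad-bc=1$, or $\zv(z) = (a\bar z + b)/(c\bar z + d)$ with $ad-bc=-1$, and I will handle these two cases in parallel.

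In the orientation-preserving case, $\zv(p/q) = (ap+bq)/(cp+dq)$, and since $\gcd(p,q)=1$ and $ad-bc=1$, the pair $(ap+bq,\,cp+dq)$ is coprime, so I may take $p'=ap+bq$ and $q'=cp+dq$. The heart of the argument is the algebraic identity
\begin{equation*}
(cp+dq)(az+b) - (ap+bq)(cz+d) = (ad-bc)(qz-p),
\end{equation*}
which is immediate by expanding and cancelling the cross terms. Dividing by $cz+d$ yields $q'\zv(z)-p' = (qz-p)/(cz+d)$, so $|q'\zv(z)-p'|^2 = |qz-p|^2/|cz+d|^2$. Combining with the standard formula $\Imag(\zv(z)) = \Imag(z)/|cz+d|^2$, the $|cz+d|^2$ factors cancel in the ratio and the claim follows.

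The orientation-reversing case runs identically with $z$ replaced by $\bar z$ and $ad-bc=-1$: the same polynomial identity (in the variable $\bar z$) gives $q'\zv(z)-p' = -\overline{qz-p}/(c\bar z + d)$, hence $|q'\zv(z)-p'|^2 = |qz-p|^2/|c\bar z + d|^2$. A short computation using $ad-bc=-1$ likewise shows $\Imag(\zv(z)) = \Imag(z)/|c\bar z + d|^2$ in this case, and the cancellation proceeds as before. I do not expect a serious obstacle; the content of the lemma is really the one-line polynomial identity above, together with the observation that $(ap+bq, cp+dq)$ is automatically reduced.
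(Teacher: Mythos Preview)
Your proof is correct, but it proceeds differently from the paper's. The paper observes that the identity is preserved under composition and then verifies it only for a generating set of $\text{PGL}(2,\bZ)$, namely $z\mapsto -\overline{z}$, $z\mapsto z+1$, and $z\mapsto -\frac{1}{z}$; each of these three checks is a one-line computation. You instead work with a general matrix $\left[\begin{smallmatrix}a&b\\c&d\end{smallmatrix}\right]$ and reduce everything to the polynomial identity $(cp+dq)(az+b)-(ap+bq)(cz+d)=(ad-bc)(qz-p)$, together with the standard formula for $\Imag(\zv(z))$. Your approach has the advantage of being self-contained (no need to invoke a generating set) and of producing the explicit relation $q'\zv(z)-p'=(qz-p)/(cz+d)$, which is slightly more information than the lemma asserts; the paper's approach is marginally shorter and makes the compatibility with composition explicit. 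Both are entirely standard.
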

  \begin{proof} If the lemma is true for $\zv_1,\zv_2\in
\text{PGL}(2,\bZ)$, then it is true for the composition $\zv_1\circ
\zv_2$.  It follows that since $z\mapsto -\overline{z}$, $z\mapsto
z+1$ and $z\mapsto -\frac{1}{z}$ generate $\text{PGL}(2,\bZ)$, to
prove the lemma, it suffices to prove it for these three
transformations.

First suppose that $\zv(z)=-\overline{z}$.  Then
$\frac{p'}{q'}=-\frac{p}{q}$, and so we may assume that $p'=p$ and
$q'=-q$. We easily see that the lemma is true in this case.  Next
suppose that $\zv(z)=z+1$.  Then $\frac{p'}{q'}=\frac{p}{q}+1$, and so
we may assume that $p'=p+q$ and $q'=q$.  Since
$\text{Im}(z+1)=\text{Im}(z)$, the lemma is true in this case.  If
$\zv(z)=-\frac{1}{z}$, then $\frac{p'}{q'}=-\frac{q}{p}$, and so we
may assume that $p'=-q$ and $q'=p$.  Since
$\text{Im}(-\frac{1}{z})=\frac{\text{Im}(z)}{\left|z\right|^2}$, the
lemma is true in this case too.

This proves Lemma~\ref{lemma:horocycle}.

\end{proof}

\begin{cor}\label{cor:horocycle}  Let $\zv\in
\text{PGL}(2,\bZ)$, let $\frac{p}{q}\in \widehat{\bQ}$ and suppose
that $\zv(\frac{p}{q})=\frac{p'}{q'}\in \widehat{\bQ}$.  Then $\zv$
maps the horoball $\{z\in
\bH:\frac{\text{Im}(z)}{\left|qz-p\right|^2}>m\}$ bijectively to the
horoball $\{z\in \bH:\frac{\text{Im}(z)}{\left|q'z-p'\right|^2}>m\}$
for every positive real number $m$.
\end{cor}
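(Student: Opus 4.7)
The plan is to deduce the corollary immediately from Lemma~\ref{lemma:horocycle}. First I would note that any $\zv \in \text{PGL}(2,\bZ)$ acts as an isometry of $\bH$---either as an element of $\text{PSL}(2,\bZ)$ (when $\det = 1$), or else as the orientation-reversing isometry $z \mapsto \frac{a\overline{z}+b}{c\overline{z}+d}$ described just before the lemma (when $\det = -1$). In particular $\zv$ is a bijection from $\bH$ to itself, so the only remaining content is the set-theoretic claim that $\zv$ carries the horoball at $p/q$ exactly onto the horoball at $p'/q'$.

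For that I would apply Lemma~\ref{lemma:horocycle} pointwise. For every $z \in \bH$ the lemma supplies the identity
\[
\frac{\text{Im}(\zv(z))}{\left|q'\zv(z)-p'\right|^2} = \frac{\text{Im}(z)}{\left|qz-p\right|^2},
\]
and so the strict inequality $\frac{\text{Im}(z)}{\left|qz-p\right|^2} > m$ is equivalent to $\frac{\text{Im}(\zv(z))}{\left|q'\zv(z)-p'\right|^2} > m$. Hence $z$ lies in the first horoball if and only if $\zv(z)$ lies in the second; combined with the bijectivity of $\zv$ on $\bH$, this delivers the claimed bijection between the two horoballs. There is essentially no genuine obstacle: the corollary is simply a reformulation of Lemma~\ref{lemma:horocycle} as a statement about the super-level sets of the quantity it shows to be $\zv$-invariant, so all of the nontrivial work has already been carried out in the proof of the lemma.
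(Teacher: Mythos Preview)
Your proposal is correct and is exactly the argument the paper has in mind: the corollary is stated without proof immediately after Lemma~\ref{lemma:horocycle}, precisely because it is the super-level-set reformulation of that lemma together with the bijectivity of $\zv$ on $\bH$.
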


Now we turn our attention to moduli of curve families.  Let $\zt\in
\bH$, let $\zL_\zt=\left<1,\zt\right>$, and let $T_\zt=\bC/\zL_\zt$.
We view $T_\zt$ as a torus with complex structure.  We calculate
slopes of simple closed curves in $T_\zt$ using the ordered basis
$(1,\zt)$ of $\zL_\zt$.  We denote by $\zG_{\frac{p}{q},\zt}$ the set
of simple closed curves in $T_{\zt}$ with slope $\frac{p}{q}$,
although in this paragraph we abbreviate $\zG_{\frac{p}{q},\zt}$ to
$\zG_{\frac{p}{q}}$.  By abuse of notation we write $dz$ for the
1-form on $T_\zt$ induced by the standard 1-form on $\bC$.  For a
nonnegative Borel measurable function $\zr$ on $T_\zt$, define
  \begin{equation*}
L_\zr(\zG_{\frac{p}{q}})=\inf_{\zg\in \zG_{\frac{p}{q}}}
\int_{\zg}^{}\zr\left|dz\right|\quad\text{and}\quad
A_\zr=\iint\limits_{T_\zt}\zr^2\left|dz\right|^2.
  \end{equation*}
The extremal length of the curve family $\zG_{\frac{p}{q}}$ on $T_\zt$
is
  \begin{equation*}
\sup_\zr \frac{L_\zr^2(\zG_{\frac{p}{q}})}{A_\zr}.
  \end{equation*}
The modulus of the curve family $\zG_{\frac{p}{q}}$ on $T_\zt$ is the
reciprocal of this:
  \begin{equation*}
\text{mod}_\zt(\tfrac{p}{q})=\inf_\zr\frac{A_\zr}{L_\zr^2(\zG_{\frac{p}{q}})}.
  \end{equation*}

The modulus is a conformal invariant: if $\zf:T_\zt\to T_{\zt'}$ is a
conformal isomorphism sending $\zG_{\frac{p}{q},\zt}$ to
$\zG_{\frac{p'}{q'},\zt'}$, then
  \begin{equation*}
\text{mod}_\zt(\tfrac{p}{q})=\text{mod}_{\zt'}(\tfrac{p'}{q'}).
  \end{equation*}

\begin{lemma}\label{lemma:mod}  For every $\frac{p}{q}\in
\widehat{\bQ}$ and $\zt\in \bH$ we have that
  \begin{equation*}
\text{mod}_\zt(\tfrac{p}{q})=\frac{\text{Im}(\zt)}{\left|p
\zt+q\right|^2}.
  \end{equation*}
\end{lemma}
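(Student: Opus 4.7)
My plan is to prove the formula first for the base case $p/q = 0$ via the classical length--area inequality on the flat torus, then reduce the general case to this base case by a change-of-basis conformal isomorphism together with the conformal invariance of the modulus noted in the paragraph preceding the lemma.

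For the base case, I would parametrize the fundamental domain of $T_\zt = \bC/\langle 1, \zt\rangle$ by $z = x + y\zt$ with $(x,y) \in [0,1]^2$, so that the area form pulls back to $|dz|^2 = \text{Im}(\zt)\, dx\, dy$. For each $y \in [0,1]$ the horizontal loop $\zg_y \co x \mapsto x + y\zt$ represents the family $\zG_{0,\zt}$, so $L_\zr(\zG_0) \le \int_0^1 \zr(x+y\zt)\, dx$. Averaging over $y$ and applying Cauchy--Schwarz gives
\begin{equation*}
L_\zr(\zG_0)^2 \le \left(\iint \zr\, dx\, dy\right)^{\!2} \le \iint \zr^2\, dx\, dy = \frac{A_\zr}{\text{Im}(\zt)},
\end{equation*}
so $\text{mod}_\zt(0) \ge \text{Im}(\zt)$. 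Taking $\zr \equiv 1$, the shortest representative of $\zG_0$ is the geodesic loop of Euclidean length $1$, whence $A_1/L_1^2 = \text{Im}(\zt)$ and equality is attained; thus $\text{mod}_\zt(0) = \text{Im}(\zt)$.

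For the general case, given relatively prime integers $p,q$, I would choose integers $a,b$ with $qb - pa = 1$. Then $w = q + p\zt$ and $w' = a + b\zt$ form a positively oriented $\bZ$-basis of $\zL_\zt$, and a short $\text{Im}$ computation yields
\begin{equation*}
\text{Im}(w'/w) = \frac{(qb-pa)\,\text{Im}(\zt)}{|q+p\zt|^2} = \frac{\text{Im}(\zt)}{|p\zt+q|^2} > 0.
\end{equation*}
Setting $\zt' = w'/w \in \bH$, the map $z \mapsto z/w$ is a $\bC$-linear isomorphism sending $\zL_\zt = \langle w, w'\rangle$ to $\langle 1, \zt'\rangle = \zL_{\zt'}$, and hence induces a conformal isomorphism $T_\zt \to T_{\zt'}$ that carries curves parallel to $w$ (slope $p/q$ on $T_\zt$) to curves parallel to $1$ (slope $0$ on $T_{\zt'}$). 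By the conformal invariance of the modulus stated just before the lemma,
\begin{equation*}
\text{mod}_\zt(p/q) = \text{mod}_{\zt'}(0) = \text{Im}(\zt') = \frac{\text{Im}(\zt)}{|p\zt+q|^2}.
\end{equation*}
The case $p/q = \infty$ is handled identically by choosing $w = \zt$ and $w' = -1$, which satisfies $qb-pa = 1$.

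The only place requiring genuine analytic input is the Cauchy--Schwarz step in the base case; once that is in hand, the remaining reduction is linear algebra together with the conformal invariance already recorded in the paragraph preceding the lemma, so I do not anticipate a serious obstacle.
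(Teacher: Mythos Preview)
Your proof is correct and follows essentially the same route as the paper's: both establish the base case $\text{mod}_\zt(0)=\text{Im}(\zt)$ via the length--area (Cauchy--Schwarz) argument with the extremal metric $\zr\equiv 1$, and then reduce the general slope to the base case by the conformal isomorphism $z\mapsto z/(p\zt+q)$ together with conformal invariance of the modulus. The only cosmetic difference is that the paper finishes by invoking Lemma~\ref{lemma:horocycle} to evaluate $\text{Im}(\zt')$, whereas you compute $\text{Im}\bigl((a+b\zt)/(q+p\zt)\bigr)=\text{Im}(\zt)/|p\zt+q|^2$ directly; your route is slightly more self-contained.
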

  \begin{proof} We first prove the lemma for $\frac{p}{q}=0$.  Let
$\zt\in \bH$.  Let $h=\text{Im}(\zt)$.  The parallelogram $P$ with
vertices 0, 1, $\zt$ and $1+\zt$ is a fundamental domain for the
action of $\zL_\zt$ on $\bC$.  Every element of $\zG_{0,\zt}$ is
homotopic to the image in $T_\zt$ of a horizontal line segment joining
the left and right sides of this fundamental domain.  Let $\zr$ be the
Borel function on $T_\zt$ which equals 1 everywhere.  Then $A_\zr$ is
the Euclidean area of $P$, and so $A_\zr=\text{Im}(\zt)=h$.  Now we
consider $L_\zr(\zG_{0,\zt})$.  Let $\zg\in \zG_{0,\zt}$.  Every lift
of $\zg$ to $\bC$ has the property that its endpoints have equal
imaginary parts and even that the Euclidean distance between these
endpoints is 1.  Since $\int_{\zg}^{}\zr\left|dz\right|$ is the
Euclidean length of every such lift, it follows that
$L_\zr(\zG_{0,\zt})=1$.  So $\text{mod}_\zt(0)\le
\frac{A_\zr}{L_\zr^2(\zG_{0,\zt})}=h$.

Now let $\zr$ be any nonnegative Borel measurable function on $T_\zt$.
Let $\widetilde{\zr}$ be the lift of $\zr$ to $\bC$.  Let
$l=L_\zr(\zG_{0,\zt})$.  We use the fact that the rectangle $R$ whose
vertices are 0, 1, $hi$ and $1+hi$ is a fundamental domain for the
action of $\zL_\zt$ on $\bC$.  Horizontal line segments in $R$ give
rise to elements of $\zG_{0,\zt}$, that is, for every $y\in [0,h]$ the
curve $\zg_y$ parametrized by $\zg(x)=x+yi \mod \zL_\zt$ from the
closed interval $[0,1]$ to $T_\zt$ is an element of $\zG_{0,\zt}$.
Hence $l\le \int_{0}^{1}\widetilde{\zr}(x,y)\,dx$ for every $y\in
[0,h]$, and so
  \begin{equation*}
hl\le \int_{0}^{h}\int_{0}^{1}\widetilde{\zr}\,dxdy\le
\left(\iint\limits_{R}dxdy\iint\limits_{R}\widetilde{\zr}^2dxdy\right)^{1/2}
=(hA_\zr)^{1/2}.
  \end{equation*}
Thus
  \begin{equation*}
\frac{A_\zr}{L_\zr^2(\zG_{0,\zt})}=\frac{A_\zr}{l^2}\ge h.
  \end{equation*}
Since $\text{mod}_\zt(0)$ is the infimum of such terms,
$\text{mod}_\zt(0)\ge h$.  This inequality and the conclusion of the
previous paragraph imply that $\text{mod}_\zt(0)=h=\text{Im}(\zt)$.
This proves Lemma~\ref{lemma:mod} if $\frac{p}{q}=0$.

Now suppose that $\frac{p}{q}\ne 0$.  Since $p$ and $q$ are relatively
prime, there exist integers $r$ and $s$ such that $rq-sp=1$.  Hence
$(p \zt+q,r \zt+s)$ is an ordered basis of $\zL_\zt$.  Setting
$\zt'=\frac{r \zt+s}{p \zt+q}\in \bH$, we find that the conformal map
$\widetilde{\zf}:\bC\to \bC$ given by
  \begin{equation*}
\widetilde{\zf}(z)=\frac{z}{p \zt+q}
  \end{equation*}
sends the lattice $\zL_\zt$ to $\zL_{\zt'}$ and sends the ordered basis
$(p \zt+q,r \zt+s)$ to $(1,\zt')$.  Hence it descends to a conformal
isomorphism $\zf:T_\zt\to T_{\zt'}$.  Moreover,
  \begin{equation*}
\zf(\zG_{\frac{p}{q},\zt})=\zG_{0,\zt'},
  \end{equation*}
and so since the modulus is a conformal invariant, we have that
  \begin{equation*}
\text{mod}_\zt(\tfrac{p}{q})=\text{mod}_{\zt'}(0).
  \end{equation*}
This and the previous paragraph imply that
  \begin{equation*}
\text{mod}_\zt(\tfrac{p}{q})=\text{Im}(\zt').
  \end{equation*}
Finally, Lemma~\ref{lemma:horocycle} with $\frac{p}{q}$ there replaced
by $-\frac{q}{p}$ shows that
$\text{mod}_\zt(\frac{p}{q})=\frac{\text{Im}(\zt)}{\left|p
\zt+q\right|^2}$.

This proves Lemma~\ref{lemma:mod}.

\end{proof}

For every $\frac{p}{q}\in \widehat{\bQ}$ and every positive real
number $m$, we set
  \begin{equation*}
B_m(\tfrac{p}{q})=\{\zt\in \bH:\text{mod}_\zt(\tfrac{p}{q})>m\}.
  \end{equation*}

\begin{cor}\label{cor:mod}  If $\frac{p}{q}\in \widehat{\bQ}$ and
if $m$ is a positive real number, then
  \begin{equation*}
B_m(\tfrac{p}{q})=\{\zt\in \bH:\tfrac{\text{Im}(\zt)}{\left|p
\zt+q\right|^2}>m\},
  \end{equation*}
a horoball in $\bH$ at $-\frac{q}{p}$.
\end{cor}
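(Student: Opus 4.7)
The plan is to treat this as an immediate consequence of Lemma~\ref{lemma:mod} together with the earlier parametrization of horoballs. The two displayed descriptions of $B_m(\frac{p}{q})$ must agree as sets, and the second must be recognizable as a horoball at $-\frac{q}{p}$; essentially nothing new needs to be proved once the pieces are lined up.

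First I would unwind the definition
  \begin{equation*}
B_m(\tfrac{p}{q}) = \{\tau \in \bH : \text{mod}_\tau(\tfrac{p}{q}) > m\}
  \end{equation*}
and substitute the explicit formula given by Lemma~\ref{lemma:mod}, namely $\text{mod}_\tau(\frac{p}{q}) = \frac{\text{Im}(\tau)}{|p\tau + q|^2}$. This yields the first asserted equality directly.

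Next I would appeal to the characterization derived just before Lemma~\ref{lemma:horocycle}: for relatively prime integers $p'$ and $q'$ with $q' \ne 0$, the horoballs in $\bH$ at $\frac{p'}{q'}$ are exactly the sets of the form $\{z \in \bH : \frac{\text{Im}(z)}{|q'z - p'|^2} > m\}$ for positive real $m$. Taking $p' = -q$ and $q' = p$ (so that $\frac{p'}{q'} = -\frac{q}{p}$ and the pair is still relatively prime), the expression $|q'z - p'|^2$ becomes $|pz + q|^2$, matching the set just obtained. For the remaining case $p = 0$, the same argument applies after noting that $-\frac{q}{p} = \infty$ and the set becomes $\{\tau : \text{Im}(\tau)/q^2 > m\}$, a horizontal half-plane, which is the horoball at $\infty$ as recorded earlier.

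There is no real obstacle here; this is a bookkeeping step verifying that the modulus formula of Lemma~\ref{lemma:mod} expresses $B_m(\frac{p}{q})$ in precisely the form that the preamble to Lemma~\ref{lemma:horocycle} recognizes as a horoball, with the cusp point shifted from $\frac{p}{q}$ to $-\frac{q}{p}$ by the exchange of the roles of $(p,q)$ and $(q',p') = (p,-q)$.
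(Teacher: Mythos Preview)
Your proposal is correct and is exactly the approach the paper has in mind; indeed the paper states Corollary~\ref{cor:mod} without proof, as an immediate consequence of substituting the formula of Lemma~\ref{lemma:mod} into the definition of $B_m(\tfrac{p}{q})$ and matching against the horoball description given just before Lemma~\ref{lemma:horocycle}. Your case analysis for $p=0$ and the substitution $(p',q')=(-q,p)$ are precisely the bookkeeping the reader is expected to supply.
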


Let $S_\zt$ be the quotient space of $T_\zt$ determined by the map
$z\mapsto -z$. Let $p_\zt\co T_\zt\to S_\zt$ be the corresponding
degree 2 branched covering map.  Let $P_\zt$ be the set of four branch
points of $p_\zt$ in $S_\zt$.  Let $\frac{p}{q}\in \widehat{\bQ}$.  By
definition, the set of essential simple closed curves in
$S_\zt\setminus P_\zt$ with slope $\frac{p}{q}$ lifts under $p_\zt$ to
$\zG_{\frac{p}{q},\zt}$.  We define the modulus of this family of
curves just as we defined $\text{mod}_\zt(\frac{p}{q})$. Because
lengths of curves do not change when pulling back from $S_\zt$ to
$T_\zt$ but area doubles, this new modulus is
$\frac{1}{2}\text{mod}_\zt(\frac{p}{q})$.

Let $f$ be a NET map.  We define a function $\zd_f\co \widehat{\bQ}\to
\bQ$ as follows.  Let $\frac{p}{q}\in \widehat{\bQ}$.  Let $\zg$ be an
essential simple closed curve in $S^2\setminus P_2$ with slope
$\frac{p}{q}$.  Let $d$ be the degree with which $f$ maps every
connected component of $f^{-1}(\zg)$ to $\zg$, and let $c$ be the
number of these connected components which are essential and
nonperipheral.  Then $\zd_f(\frac{p}{q})=\frac{c}{d}$.  The multicurve
$\zG$ whose only element is $\zg$ is $f$-stable if and only if either
$\zs_f(\frac{p}{q})=\frac{p}{q}$ or $\zs_f(\frac{p}{q})=o$.  If $\zG$
is $f$-stable, then the Thurston matrix $A^\zG$ is $1\times 1$ with
entry $\zd_f(\frac{p}{q})$.  Thus $\zG$ is a Thurston obstruction if
and only if $\frac{p}{q}\in \text{Fix}(\zs_f)$ and
$\zd_f(\frac{p}{q})\ge 1$.

We maintain the setting of the previous paragraph.  Recall from the
introduction that $\zS_f\co \mathbb{H}\to \mathbb{H}$ is the map on
Teichm\"{u}ller space induced by $f$.  Let $\zt\in \bH$, let
$\zt'=\zS_f(\zt)$, let $\frac{p'}{q'}=\zs_f(\frac{p}{q})$ and let
$\zd=\zd_f(\frac{p}{q})$.  Then an argument based on the subadditivity
of moduli proves that
  \begin{equation*}
\text{mod}_{\zt'}(\tfrac{p'}{q'})\ge \zd\text{mod}_\zt(\tfrac{p}{q}).
  \end{equation*}
Hence
  \begin{equation*}\linnum\label{lin:contain}
\zS_f(B_m(\tfrac{p}{q}))\subseteq B_{\zd m}(\tfrac{p'}{q'})
  \end{equation*}
for every positive real number $m$.

\begin{remark}\label{remark:selinger} For a sphere with four marked
points, the Teichm\"{u}ller and hyperbolic metrics coincide.  There is
another natural metric on Teichm\"{u}ller space, the so-called
Weil-Petersson (WP) metric. The WP metric is incomplete. For a sphere
with four marked points, the WP boundary, as a topological space, is
discrete and as a set is $\widehat{\mathbb{Q}}$.  A neighborhood
basis element for a boundary point $-\frac{q}{p}$ is a horoball
tangent to $-\frac{q}{p}$ union the singleton $\{-\frac{q}{p}\}$.
Selinger \cite{Se} shows that $\zS_f$ is $\sqrt{\deg(f)}$-Lipschitz
with respect to the WP metric, and so extends to the WP completion. It
easily follows that for NET maps, computation of $\zs_f$ is the
computation of the boundary values of $\zS_f$ on the WP completion of
Teichm\"{u}ller space.

\end{remark}

We next prove the following theorem in the above setting.  Let
$d(\cdot ,\cdot )$ denote the hyperbolic metric for $\bH$.  If $H$ is
a half-space in $\mathbb{H}$, then we let $\partial _{\infty}H$ denote
the set of points in the boundary of $\mathbb{H}$ which are limits of
sequences in $H$.

\begin{thm}\label{thm:halfsp}
\begin{enumerate}
  \item If $\frac{p}{q}\ne \frac{p'}{q'}$, then for every sufficiently
large $m$, the closed horoballs $B=\overline{B_m(\frac{p}{q})}$ and
$B'=\overline{B_{\zd m}(\frac{p'}{q'})}$ are disjoint.  When they are
disjoint the set $H=\{\zt\in \bH:d(\zt,B)<d(\zt,B')\}$ is an open
hyperbolic half-space which is independent of $m$.
  \item  If $\frac{r}{s}\in \text{Fix}(\zs_f)$ and
$-\frac{s}{r}\in \partial _\infty H$, then $\zd_f(\frac{r}{s})<1$,
that is, there is no Thurston obstruction with slope $\frac{r}{s}$.
  \item If $\zt_0\in \text{Fix}(\zS_f)$, then $\zt_0\notin H$.
\end{enumerate}
\end{thm}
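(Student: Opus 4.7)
The overall plan is to reduce each statement to an inequality between moduli and then invoke line~(\ref{lin:contain}). The key first step is an explicit formula for $H$: since $B_m(p/q)=\{\zt:\mathrm{mod}_\zt(p/q)>m\}$ and for $\zt$ outside $B_m(p/q)$ the signed hyperbolic distance to it equals $\log m-\log\mathrm{mod}_\zt(p/q)$, the defining condition $d(\zt,B)<d(\zt,B')$ simplifies, with $m$ canceling, to
\begin{equation*}
\mathrm{mod}_\zt(p'/q')<\zd\,\mathrm{mod}_\zt(p/q),\qquad\text{equivalently}\qquad |p\zt+q|^2<\zd\,|p'\zt+q'|^2.
\end{equation*}
This is independent of $m$. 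Expanded it has the form $A|\zt|^2+B\,\Real(\zt)+C<0$ with real $A,B,C$, whose boundary locus is a Euclidean circle or vertical line orthogonal to $\bR$, hence a hyperbolic geodesic in $\bH$; so $H$ is an open hyperbolic half-space. Disjointness of the closed horoballs $B$ and $B'$ for large $m$ follows since horoballs at distinct ideal points $-q/p\ne -q'/p'$ shrink to those points as $m\to\infty$. This proves~(1).

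For~(3), line~(\ref{lin:contain}) upgrades to a pointwise inequality: given $\zt\in\bH$, choosing $m$ just below $\mathrm{mod}_\zt(p/q)$ places $\zt\in B_m(p/q)$, whence $\zS_f(\zt)\in B_{\zd m}(p'/q')$; letting $m\nearrow\mathrm{mod}_\zt(p/q)$ yields
\begin{equation*}
\mathrm{mod}_{\zS_f(\zt)}(p'/q')\ge\zd\,\mathrm{mod}_\zt(p/q)\qquad\text{for every }\zt\in\bH.
\end{equation*}
At a fixed point $\tau_0=\zS_f(\tau_0)$ this becomes $\mathrm{mod}_{\tau_0}(p'/q')\ge\zd\,\mathrm{mod}_{\tau_0}(p/q)$, directly contradicting the strict reverse inequality characterizing $\tau_0\in H$; hence $\tau_0\notin H$.

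Statement~(2) is the main obstacle, since no fixed point of $\zS_f$ in $\bH$ is available a priori; instead the plan is to apply the same idea at the boundary ``fixed point'' $-s/r$. By Remark~\ref{remark:selinger}, $\zS_f$ extends continuously to the Weil--Petersson completion and, since $r/s\in\Fix(\zs_f)$, the extension fixes $-s/r$, so $\zS_f(\zt)\to -s/r$ as $\zt\to -s/r$. Substituting $\zt=-s/r+i\ze$ gives the local asymptotics $\mathrm{mod}_\zt(a/b)\sim\ze\,r^2/(br-as)^2$ for $a/b\ne r/s$, and $\mathrm{mod}_\zt(r/s)\sim 1/(r^2\ze)$. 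The condition $-s/r\in\bdry_\infty H$ (interior of the arc) passes the $H$-defining inequality to the limit, giving $(qr-ps)^2<\zd\,(q'r-p's)^2$. Writing $v=\Imag(\zS_f(\zt))/\Imag(\zt)$ and substituting the asymptotics into the pointwise form of~(\ref{lin:contain}) at slope $p/q$ yields $v\ge\zd(q'r-p's)^2/(qr-ps)^2>1$, while applying it at the fixed slope $r/s$ gives $v\le 1/\zd_f(r/s)$. Combining,
\begin{equation*}
\zd_f(r/s)\;\le\;\frac{(qr-ps)^2}{\zd\,(q'r-p's)^2}\;<\;1.
\end{equation*}
The technical heart is ensuring that the closed inequalities from~(\ref{lin:contain}) pass to the limit $\zt\to -s/r$ and that $\zS_f(\zt)$ remains within the region where the local asymptotics for $\mathrm{mod}$ are valid; the Weil--Petersson continuity of Remark~\ref{remark:selinger} is exactly what supplies this.
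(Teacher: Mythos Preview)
Your argument is correct. For (1) and (3) your modulus-based computation is essentially a reformulation of the paper's distance-based argument, related by the identity $d(\zt,\overline{B_m(a/b)})=\log m-\log\mathrm{mod}_\zt(a/b)$ for $\zt$ outside the horoball; one small omission is that you derive the equivalence $d(\zt,B)<d(\zt,B')\Longleftrightarrow\mathrm{mod}_\zt(p'/q')<\zd\,\mathrm{mod}_\zt(p/q)$ only for $\zt$ outside both horoballs, and should note separately that it persists for $\zt\in B$ or $\zt\in B'$ (easy, using disjointness).

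For (2) you take a genuinely different route. The paper argues by contradiction with a purely synthetic picture: assuming $\zd_f(r/s)\ge 1$, one chooses $m$ so that $B$ and $B'$ are externally tangent, then $m^*$ so that a horoball $B''=\overline{B_{m^*}(r/s)}$ is tangent to $B$ at a point $\zh$; the hypothesis $-s/r\in\partial_\infty H$ forces $B''\cap B'=\emptyset$, while line~(\ref{lin:contain}) gives $\zS_f(\zh)\in\zS_f(B)\cap\zS_f(B'')\subseteq B'\cap B''$, a contradiction. Your approach instead sends $\zt\to -s/r$, uses the asymptotics of $\mathrm{mod}_\zt$, and combines the pointwise inequalities from line~(\ref{lin:contain}) at the two slopes $p/q$ and $r/s$ to squeeze $\zd_f(r/s)\le (qr-ps)^2/\bigl(\zd(q'r-p's)^2\bigr)<1$. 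This works and even gives a quantitative bound the paper does not state. Two remarks: first, invoking Selinger is unnecessary, since the convergence $\zS_f(\zt)\to -s/r$ already follows from line~(\ref{lin:contain}) applied at the fixed slope $r/s$ (here $\zd_f(r/s)>0$ because $r/s\in\mathrm{Fix}(\zs_f)$ forces $c>0$); second, your asymptotic $\mathrm{mod}_{\zS_f(\zt)}(r/s)\sim 1/(r^2\eta)$ needs the real drift $\xi$ to be $O(\eta)$, which you do not control, but the argument is rescued by the exact inequality $\mathrm{mod}_{\zS_f(\zt)}(r/s)=\eta/\bigl(r^2(\xi^2+\eta^2)\bigr)\le 1/(r^2\eta)$, which is all you actually use. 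The paper's tangent-horoball argument is shorter and self-contained; yours is more computational but yields an explicit upper bound on $\zd_f(r/s)$.
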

  \begin{proof} We first prove statement 1.  It is clear from the
description of horoballs above that if $m$ is sufficiently large, then
$B$ and $B'$ are disjoint.  We assume that $m$ is this large.  For
every $m>0$, every $\frac{a}{b}\in \widehat{\bQ}$ and every $t>0$, the
hyperbolic distance between the horocycles $\partial B_m(\frac{a}{b})$
and $\partial B_{tm}(\frac{a}{b})$ is equal to $\left|\ln(t)\right|$;
in particular this is independent of $m$.  Let $l$ be the hyperbolic
geodesic joining $-\frac{q}{p}$ and $-\frac{q'}{p'}$, let
$l_m\subseteq l$ be the closure of the geodesic segment lying outside
$B\cup B'$ and let $l_m^\perp$ be its perpendicular bisector.  Then
$l_m^\perp$ is independent of $m$, and so $H$ is an open hyperbolic
half-space independent of $m$.  This proves statement 1.

To prove statement 2, suppose that $\frac{r}{s}\in \text{Fix}(\zs_f)$,
that $-\frac{s}{r}\in \partial _\infty H$ and to the contrary that
$\zd_f(\frac{r}{s})\ge 1$.  We now choose $m$ so that $B$ and $B'$
intersect in a single point.  Figure~\ref{fig:halfsp} shows $B$ and
$B'$ with $\frac{p}{q}=0$.  It also shows $l_m^\perp$.  Now we choose
$m^*>0$ so that $B$ and $\overline{B_{m^*}(\frac{r}{s})}$ also
intersect in a single point $\zh$.  The assumption that
$-\frac{s}{r}\in \partial_ \infty H$ implies that
$\overline{B_{m^*}(\frac{r}{s})}\cap B'=\emptyset$.
Line~\ref{lin:contain} implies that $\zS_f(B)\subseteq B'$ and
$\zS_f(\overline{B_{m^*}(\frac{r}{s})})\subseteq
\overline{B_{m^*}(\frac{r}{s})}$ because $\zd_f(\frac{r}{s})\ge 1$.
But then $\zS_f(\zh)\in B'\cap \overline{B_{m^*}(\frac{r}{s})}$, which
is impossible.  This proves statement 2.

\begin{figure}\begin{center}
\includegraphics{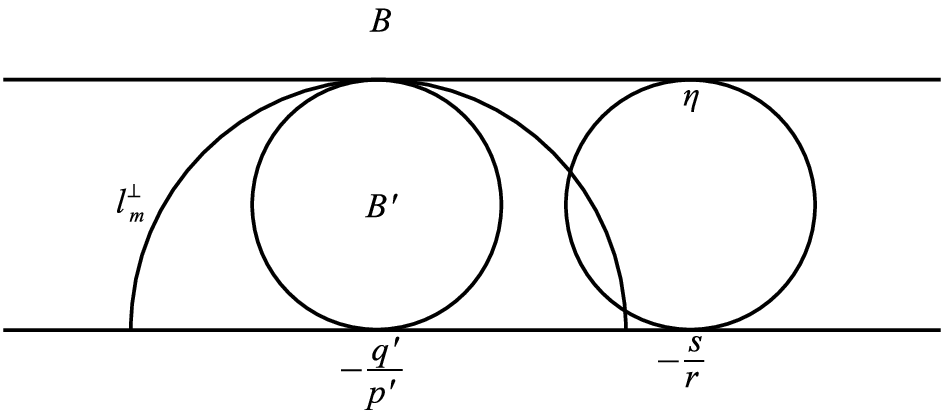} \caption{ Proving Theorem~\ref{thm:halfsp}.}
\label{fig:halfsp}
\end{center}\end{figure}

To prove statement 3, suppose that $\zt_0\in \text{Fix}(\zS_f)$.  Let
$\zt_1\in B$ realize the distance between $\zt_0$ and $B$.
Line~\ref{lin:contain} implies that $\zS_f(B)\subseteq B'$ and $\zS_f$
is distance nonincreasing, so
  \begin{equation*}
d(\zt_0,B)=d(\zt_0,\zt_1)\ge d(\zt_0,\zS_f(\zt_1))\ge d(\zt_0,B').
  \end{equation*}
Hence $\zt_0\notin H$, proving statement 3.

This proves Theorem~\ref{thm:halfsp}.

\end{proof}

  \begin{figure}
\centerline{\includegraphics{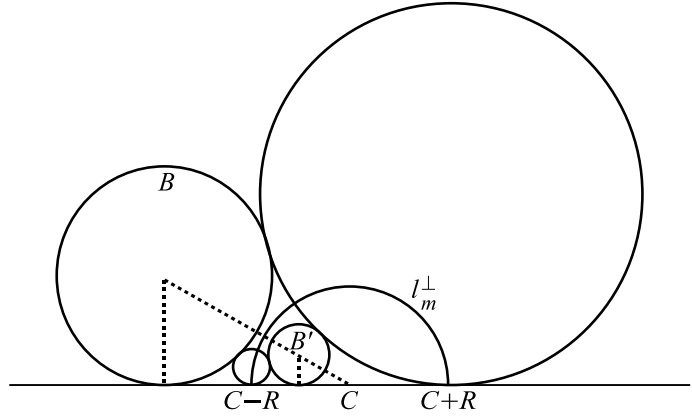}} \caption{ The
generic situation in Theorem~\ref{thm:halfsp}.}
\label{fig:halfspb}
  \end{figure}

Figure~\ref{fig:halfspb} shows the generic situation in
Theorem~\ref{thm:halfsp}.  The horocycle boundaries of $B$ and $B'$
are shown, tangent to the real line at $-\frac{q}{p}$ and
$-\frac{q'}{p'}$, respectively.  The geodesic $l_m^\perp$ in the proof
of Theorem~\ref{thm:halfsp} is shown, with Euclidean center $C$ and
Euclidean radius $R$.  Two other horocycles are shown.  The larger one
is tangent to $B$ and the real line at $C+R$.  The smaller one is
tangent to $B$ and the real line at $C-R$.  Because $l$ is the
geodesic joining $-\frac{q}{p}$ and $-\frac{q'}{p'}$ and $l_m^\perp$ is
the perpendicular bisector of the segment of $l$ joining $B$ and $B'$,
inversion about $l_m^\perp$ stabilizes these two horocycles and
interchanges $B$ and $B'$.  So these two horocycles are also tangent
to $B'$.

The proof of the main part of Theorem~\ref{thm:halfsp} can be
restated as follows.  Suppose that $f$ has a Thurston obstruction with
slope $r$.  Line~\ref{lin:contain} implies that $\zS_f$ maps every
horoball at $r$ into itself.  Let $B_r$ be the closed horoball at $r$
tangent to $B$.  Then $\zS_f(B)\cap B_r\ne \emptyset$, because $\zS_f$
maps $B_r$ into itself.  But then $B'\cap B_r\ne \emptyset$.  So given
$B$ and $B'$, it follows that $C-R\le r\le C+R$.

To apply Theorem~\ref{thm:halfsp}, it is useful to have concrete
formulas for $C$ and $R$.  We find such formulas now.  The dotted line
segments in Figure~\ref{fig:halfspb} identify two similar right
triangles.  The height of the larger one times 2 is the Euclidean
diameter of $B$, which by the argument involving
Figure~\ref{fig:horocycle} is $D=\frac{1}{mp^2}$.  The width of this
right triangle is $C+\frac{q}{p}$.  Corresponding statements hold for
the smaller right triangle, and so
  \begin{equation*}
\frac{C+\frac{q}{p}}{\frac{1}{mp^2}}=
\frac{C+\frac{q'}{p'}}{\frac{1}{\zd mp'^2}}\Longleftrightarrow
p^2C+pq=\zd p'^2C+\zd p'q'\Longleftrightarrow
C=\frac{-pq+\zd p'q'}{p^2-\zd p'^2}.
  \end{equation*}

Because $-\frac{q}{p}$ is gotten from $-\frac{q'}{p'}$ by inversion
about $l_m^\perp$, it follows that
$R^2=(C+\frac{q}{p})(C+\frac{q'}{p'})$.  The last display shows that
$C+\frac{q'}{p'}=\frac{p^2}{\zd p'^2}(C+\frac{q}{p})$.  So
$R^2=\frac{p^2}{\zd p'^2}(C+\frac{q}{p})^2$.  We calculate:
  \begin{equation*}
C+\frac{q}{p}=\frac{-pq+\zd p'q'}{p^2-\zd p'^2}+\frac{q}{p}=
\frac{-p^2q+\zd pp'q'+p^2q-\zd p'^2q}{p(p^2-\zd p'^2)}=
\frac{\zd p'(pq'-p'q)}{p(p^2-\zd p'^2)}.
  \end{equation*}
So
  \begin{equation*}
R=\left|\frac{(pq'-p'q)\sqrt{\zd}}{p^2-\zd p'^2 }\right|.
  \end{equation*}
These formulas for $C$ and $R$ hold if neither $\frac{p}{q}$ nor
$\frac{p'}{q'}$ is 0 and the Euclidean radii of $B$ and $B'$ are
unequal.  If either $\frac{p}{q}=0$ or $\frac{p'}{q'}=0$, then one may
either directly verify that these formulas still hold or one may apply
a continuity argument.  We note that if $\zd>\frac{p^2}{p'^2}$, then
$B$ has larger Euclidean radius than $B'$ as in
Figure~\ref{fig:halfspb} and $H$ is the region in $\mathbb{H}$ outside
the Euclidean circle with center $C$ and radius $R$.  If
$\zd<\frac{p^2}{p'^2}$, then $H$ is the region in $\mathbb{H}$ within
this circle.

Finally, we consider the case in which $\zd=\frac{p^2}{p'^2}$.  In
this case $B$ and $B'$ have the same Euclidean radius and $l_m^\perp$
is a Euclidean half-line.  One endpoint of $l_m^\perp$ is $\infty$ and
the other is $-\frac{1}{2}(\frac{q}{p}+\frac{q'}{p'})$.  So if
$\zd=\frac{p^2}{p'^2}$, then
  \begin{equation*}
H=\{\zt\in \bH:\text{Re}(\zt)<-\frac{1}{2}(\frac{q}{p}+\frac{q'}{p'})\}
\text{ if }\frac{p}{q}<\frac{p'}{q'}
  \end{equation*}
and
  \begin{equation*}
H=\{\zt\in \bH:\text{Re}(\zt)>-\frac{1}{2}(\frac{q}{p}+\frac{q'}{p'})\}
\text{ if }\frac{p}{q}>\frac{p'}{q'}
  \end{equation*}
with the convention that every rational number is less than $\infty$.

\begin{ex}\label{ex:mainhoro}  In this example we apply
these ideas to the Thurston map $f$ of the main example.
Table~\ref{tab:mainhoro} contains values of $\frac{p}{q}$,
$\frac{p'}{q'}$, $\zd$, $C$, and $R$ and the last column states
whether or not $H$ is bounded in the Euclidean metric.  The values of
$p'$ and $q'$ can be computed using the results at the end of
Section~\ref{sec:slopefn}.  The values of $\zd$ come from the values
of $c$ and $d$ in Table~\ref{tab:mainex}.

\begin{table}\renewcommand{\arraystretch}{1.4}
\begin{tabular}{|c|c|c|c|c|c|}\hline
$\frac{p}{q}$ & $\frac{p'}{q'}$ & $\zd$  & $C$ & $R$ & $H$ bounded?
  \\ \hline
$-\frac{1}{2}$  & 0 & 6 & 2 & $\sqrt{6}$ & yes \\ \hline
$-\frac{1}{4}$  & $\frac{1}{6}$ & $\frac{2}{5}$ & $\frac{32}{3}$ &
  $\frac{10\sqrt{10}}{3}$ & yes \\ \hline
$\frac{1}{8}$  & $\frac{1}{4}$ & 2 & 0 &
  $4\sqrt{2}$ & no \rule{0ex}{2.5ex}\\ \hline
$\frac{1}{4}$  & $\frac{1}{2}$ & $\frac{2}{5}$ & $-\frac{16}{3}$ &
  $\frac{2\sqrt{10}}{3}$ & yes \\ \hline
$\frac{1}{3}$  & 0 & $\frac{1}{2}$ & $-3$ &
  $\frac{1}{\sqrt{2}}$ & yes \\ \hline
$\frac{7}{16}$  & $\frac{1}{4}$ & 6 & $-\frac{88}{43}$ &
  $\frac{12\sqrt{6}}{43}$ & yes \\ \hline
$\frac{1}{2}$  & $\frac{1}{3}$ & $\frac{2}{5}$ & $-\frac{4}{3}$ &
  $\frac{\sqrt{10}}{3}$ & yes \\ \hline
$\frac{3}{4}$  & $\frac{1}{2}$ & 6 & 0 &
  $\frac{2\sqrt{6}}{3}$ & yes \\ \hline
\end{tabular}
\smallskip
\caption{ Half-space data for the main example.}
\label{tab:mainhoro}
\end{table}

Figure~\ref{fig:halfspinttn} shows the corresponding half-spaces $H$
in $\bH$.  The intersection of their complements is shaded.  Statement
2 of Theorem~\ref{thm:halfsp} and the fact that this intersection is a
bounded subset of $\mathbb{H}$ imply that $f$ has no Thurston
obstructions.  Thus the Teichm\"{u}ller map of $f$ has a fixed point
in $\bH$, the map $f$ is equivalent to a rational map and the finite
subdivision rule of the main example is combinatorially conformal.
Statement 3 of Theorem~\ref{thm:halfsp} implies that the fixed point
of the Teichm\"{u}ller map of $f$ is in the shaded region of
Figure~\ref{fig:halfspinttn}.  More half-spaces are drawn in
Figure~\ref{fig:halfspinttn} than is necessary to obtain these
results; the half-spaces corresponding to $\frac{7}{16}$ and
$-\frac{1}{2}$ are not necessary.  These two half-spaces are included
because computations suggest that the half-space corresponding to each
value of $\frac{p}{q}\in \widehat{\bQ}$ is contained in one of the
eight shown.

\end{ex}

\begin{figure}\begin{center}
\includegraphics{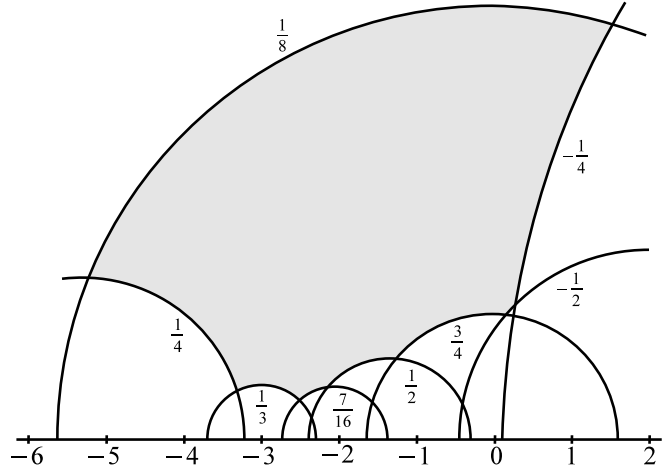} \caption{ Some
half-spaces for the main example.}
\label{fig:halfspinttn}
\end{center}\end{figure}

\section{Dehn twists }\label{sec:dehn}\nosubsections

We fix our conventions concerning Dehn twists in this paragraph.  Let
$\zg$ be a simple closed curve in an oriented surface $S$.  Let $A$ be
a closed regular neighborhood of $\zg$ in $S$, so that there exists an
orientation-preserving homeomorphism $g\co \{z\in \bC:1\le
\left|z\right|\le 2\}\to A$.  A (right-handed) Dehn twist about $\zg$
is a homeomorphism from $S$ to $S$ which is the identity map outside
of $A$ and on $A$ it has the form $g\circ h\circ g^{-1}:A\to A$, where
$h(re^{2 \zp i \zq})=re^{2 \zp i(\zq-r)}$.  Thus for a fixed $\zq$, as
$r$ traverses the closed interval $[1,2]$ in either direction,
$h(re^{2 \zp i \zq})$ bends to the right as it winds once around the
annulus $\{z\in \bC:1\le \left|z\right|\le 2\}$.

Now let $\zg$ be a simple closed curve in $S^2\setminus P_2$, and let
$t\co S^2\setminus P_2\to S^2\setminus P_2$ be a right-handed Dehn
twist about $\zg$.  We next determine the induced map $\zS_t\co \bH\to
\bH$ on Teichm\"{u}ller space.  If $\zg$ is inessential or peripheral,
then $t$ is homotopic to the identity map, and so $\zS_t$ is the
identity map.  So suppose that $\zg$ is essential with slope
$\frac{p}{q}$.  Then $\zg$ is homotopic to the image in $S^2\setminus
P_2$ under $p_2\circ q_2$ of a line segment.  Such a line segment is
drawn with dashes in Figure~\ref{fig:gammatwist}.  Here $\zl=q \zl_2+p
\zm_2$ and $\zm=s \zl_2+r \zm_2$, where $r$ and $s$ are integers with
$ps-qr=1$, so that $\zl$ and $\zm$ form a basis of $\zL_2$.  Let
$\zs_t\co \widehat{\mathbb{Q}}\to \widehat{\mathbb{Q}}\cup
\{o\}$ be the induced map on slopes.

We first consider the case for which $\frac{p}{q}=0$.  Then
$\zs_t(0)=0$.  This and the results of Section~\ref{sec:horoballs} up
to and including Lemma~\ref{lemma:mod} imply that $\zS_t$ maps every
horocycle based at $\infty$ to itself.  By considering
Figure~\ref{fig:gammatwist}, we see that $\zs_t(\infty)=-\frac{1}{2}$
and $\zs_t(1)=-1$.  This and the results of
Section~\ref{sec:horoballs} imply that $\zS_t$ maps horocycles based
at 0, respectively $-1$, with Euclidean diameter $D$ to horocycles
based at 2, respectively 1, with Euclidean diameter $D$.  Now let
$z\in \mathbb{H}$.  The element $z$ determines horocycles based at
$\infty$, 0 and $-1$.  The map $\zS_t$ takes the first horocycle to
itself, it takes the second horocycle to a horocycle based at 2
maintaining Euclidean diameter and it takes the third horocycle to a
horocycle based at 1 maintaining Euclidean diameter.  Hence
$\zS_t(z)=z+2$ for every $z\in \mathbb{H}$.

Now consider a general value of $\frac{p}{q}\in
\widehat{\mathbb{Q}}$.  Let $r$ and $s$ be integers such that
$ps-qr=1$.  The induced map on $\mathbb{H}$ of a Dehn twist about a
simple closed curve with slope $\frac{p}{q}$ is conjugate to the
induced map on $\mathbb{H}$ of a Dehn twist about a simple closed
curve with slope 0.  In terms of matrices, this conjugation has the
form
  \begin{equation*}
\begin{aligned}
\left[\begin{matrix}-q & -s \\ p & r \end{matrix}\right]
  \left[\begin{matrix}1 & 2 \\ 0 & 1 \end{matrix}\right]
  \left[\begin{matrix}-q & -s \\ p & r \end{matrix}\right]^{-1}
& = \left[\begin{matrix}-q & -s-2q \\ p & r+2p \end{matrix}\right]
  \left[\begin{matrix}r & s \\ -p & -q \end{matrix}\right]\\
& = \left[\begin{matrix}1+2pq & 2q^2 \\ -2p^2 & 1-2pq \end{matrix}\right].
\end{aligned}
  \end{equation*}
The matrix $\left[\begin{smallmatrix} -q& -s \\ p &
r \end{smallmatrix}\right]$ in $\text{SL}(2,\bZ)$ represents a
M\"{o}bius transformation which maps $\infty$ to $-\frac{q}{p}$.  The
matrix $\left[\begin{smallmatrix} 1& 2 \\ 0 &
1 \end{smallmatrix}\right]$ represents the second power of a
generator of the stabilizer of $\infty$ in $\text{PSL}(2,\bZ)$, and it
translates horocycles at $\infty$ in the counterclockwise direction.
So $\zS_t$ is the second power of a generator of the stabilizer of
$-\frac{q}{p}$ in $\text{PSL}(2,\bZ)$, and it translates horocycles at
$-\frac{q}{p}$ in the counterclockwise direction.

\begin{figure}\begin{center} \includegraphics{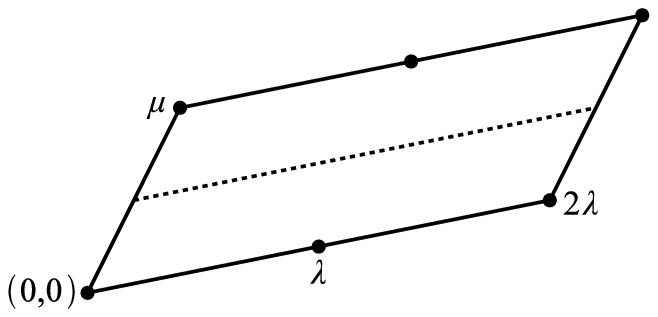}
\caption{ Determining $\zS_t$.}
\label{fig:gammatwist}\end{center}
\end{figure}

Now we consider the effect of Dehn twists on a NET map $f$.  Let $\zg$
be an essential, nonperipheral simple closed curve in $S^2\setminus
P_2$.  Let $t$ be a right-handed Dehn twist about $\zg$.  Let $d$ be
the degree with which $f$ maps every connected component of
$f^{-1}(\zg)$ to $\zg$.  The number of connected components of
$f^{-1}(\zg)$ is $d'=\text{deg}(f)/d$.  Let $t_1,\dotsc,t_{d'}$ be
right-handed Dehn twists about these simple closed curves in
$S^2\setminus P_2$.  By considering Figure~\ref{fig:degscmpts}, we see
that $t^d\circ f$ is homotopic to $f\circ t_1\circ \cdots \circ
t_{d'}$.  Hence
  \begin{equation*}
\zS_f\circ \zS_t^d=\zS_{t_{d'}}\circ \cdots \circ \zS_{t_1}\circ \zS_f.
  \end{equation*}
The Dehn twists among $t_1,\dotsc,t_{d'}$ which correspond to
inessential or peripheral connected components act trivially on $\bH$,
and the rest induce the same map.  We have proved the following
theorem.

\begin{thm}\label{thm:fnleqns} Let $f$ be a NET map.  Let
$\frac{p}{q}\in \widehat{\bQ}$, and let $\zg$ be an essential simple
closed curve in $S^2\setminus P_2$ with slope $\frac{p}{q}$.  Let $c$
be the number of connected components of $f^{-1}(\zg)$ which are
essential and nonperipheral in $S^2\setminus P_2$, and suppose that
$f$ maps each of these components to $\zg$ with degree $d$.  Suppose
that each of these essential connected components has slope
$\frac{p'}{q'}\in \widehat{\bQ}$.  Let
  \begin{equation*}
\zv(z)=\frac{(1+2pq)z+2q^2}{-2p^2z+1-2pq}\quad\text{and}\quad
\zj(z)=\frac{(1+2p'q')z+2q'^2}{-2p'^2z+1-2p'q'}.
  \end{equation*}
Then
  \begin{equation*}
\zS_f\circ \zv^d=\zj^c\circ \zS_f.
  \end{equation*}
\end{thm}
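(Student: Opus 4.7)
The plan is to assemble the theorem directly from the preceding discussion: the hard topological work (identifying what $\zS_t$ looks like for a Dehn twist $t$, and establishing the homotopy $t^d\circ f\simeq f\circ t_1\circ\cdots\circ t_{d'}$) has already been carried out in the paragraphs leading up to the statement, so the proof is mostly a bookkeeping exercise of combining those inputs functorially.

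First, I would identify the M\"{o}bius transformations $\zv$ and $\zj$ with Teichm\"{u}ller-space maps induced by Dehn twists. Let $t\co S^2\setminus P_2\to S^2\setminus P_2$ be a right-handed Dehn twist about $\zg$. The matrix computation preceding the theorem (conjugating the horizontal translation $\left[\begin{smallmatrix}1&2\\0&1\end{smallmatrix}\right]$ by $\left[\begin{smallmatrix}-q&-s\\p&r\end{smallmatrix}\right]$) shows that $\zS_t$ is exactly the M\"{o}bius transformation $\zv$. Letting $\za_1,\dotsc,\za_{d'}$ denote the connected components of $f^{-1}(\zg)$ with $d'=\deg(f)/d$, and $t_i$ a right-handed Dehn twist about $\za_i$, the same computation (applied with $p'/q'$ in place of $p/q$) identifies $\zS_{t_i}$ with $\zj$ whenever $\za_i$ is essential and nonperipheral, and with the identity whenever $\za_i$ is inessential or peripheral (since then $t_i$ is isotopic rel $P_2$ to the identity).

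Second, I would apply contravariant functoriality of $\zS$ to the topological identity
\begin{equation*}
t^d\circ f\simeq f\circ t_1\circ\cdots\circ t_{d'}\pmod{P_2}
\end{equation*}
stated in the preceding paragraph. Since homotopic (rel $P_2$) maps induce the same map on $\cT=\bH$, and $\zS_{g_1\circ g_2}=\zS_{g_2}\circ \zS_{g_1}$ because Teichm\"{u}ller space is built from pullback of complex structures, taking $\zS$ of both sides yields
\begin{equation*}
\zS_f\circ \zS_t^d = \zS_{t_{d'}}\circ\cdots\circ \zS_{t_1}\circ \zS_f.
\end{equation*}
By the first step, the left side is $\zS_f\circ \zv^d$, and on the right side the $d'-c$ trivial factors drop out while the $c$ remaining factors each equal $\zj$, giving $\zj^c\circ \zS_f$. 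This yields the claimed equation.

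The main potential obstacle is not the algebra but the justification of the topological identity $t^d\circ f\simeq f\circ t_1\circ\cdots\circ t_{d'}$. The subtle point is why a single Dehn twist downstairs pulls back to one twist about each preimage, with the power appearing on the downstairs side equal to the covering degree $d$ rather than upstairs; this is because over an annular neighborhood $A$ of $\zg$, the restriction $f|_{\tilde A_i}\co \tilde A_i\to A$ of $f$ to each annular component of $f^{-1}(A)$ is a degree-$d$ cyclic cover, under which the generator of $\pi_1(\tilde A_i)$ maps to the $d$-th power of the generator of $\pi_1(A)$. The paper defers this to the figure, and I would verify it by working in such annular neighborhoods, checking that outside $\bigcup_i \tilde A_i$ both sides equal $f$, and that inside each $\tilde A_i$ the two compositions realize the same element of the mapping class group of the pair (annulus, boundary). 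Once this is in hand, the rest of the argument is the purely formal manipulation above.
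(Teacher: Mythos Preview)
Your proposal is correct and follows essentially the same approach as the paper: the paper's proof is precisely the discussion in the paragraphs preceding the theorem statement, which identifies $\zS_t$ with the M\"{o}bius transformation $\zv$, invokes the homotopy $t^d\circ f\simeq f\circ t_1\circ\cdots\circ t_{d'}$ (justified there only by reference to Figure~\ref{fig:degscmpts}), applies $\zS$ contravariantly, and drops the trivial factors. Your additional remarks on why the exponent $d$ appears downstairs, via the degree-$d$ annular covers, supply more detail than the paper gives but are in the same spirit.
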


Theorem~\ref{thm:fnleqns} includes the possibility that $f^{-1}(\zg)$
contains no essential components, in which case $c=0$ and $\zS_f\circ
\zv^d=\zS_f$.

\begin{remark}\label{remark:virtual} Every Thurston map $f\co
(S^2,P_f)\to (S^2, P_f)$ defines a virtual endomorphism
  \begin{equation*}
\zf_f\co \text{Mod}(S^2,P_f)\dashrightarrow \text{Mod}(S^2,P_f)
  \end{equation*}
as in \cite{P}.  For NET maps, computation of the slope function
$\zs_f$ together with the mapping degree and number of preimages
amounts to computation of $\zf_f$ on powers of Dehn twists. This is
what Lodge exploits in \cite{L} to calculate the slope
function for the NET map of Example~\ref{ex:BEKP}: he finds (in the
terminology of Theorem~\ref{thm:fnleqns}) $\zj^c$ in terms of $\zv^d$,
which allows him to find $\frac{p'}{q'}$ in terms of $\frac{p}{q}$.

\end{remark}

\begin{ex}\label{ex:fnleqns}  We apply
Theorem~\ref{thm:fnleqns} to the main example with slope $\infty$.  It
is easy to see that $\zs_f(\infty)=\infty$; the methods of
Section~\ref{sec:slopefn} are not needed.  We have that
$\zv(z)=\zj(z)=\frac{z}{-2z+1}$.  Table~\ref{tab:mainex} shows that
$c=2$ and $d=5$.  The equation $\psi^{-c} \circ \Sigma_f = \Sigma_f
\circ \zv^{-d}$ yields that
  \begin{equation*}
\zS_f\left(\frac{z}{10z+1}\right)=\frac{\zS_f(z)}{4 \zS_f(z)+1}.
  \end{equation*}
\end{ex}

\section{Reflections }\label{sec:rflns}\nosubsections

By a reflection $\zr$ of a 2-sphere about a simple closed curve $\zg$
we mean a map which is topologically conjugate to the map of
$\{(x,y,z)\in \bR^3:x^2+y^2+z^2=1\}$ given by $(x,y,z)\mapsto
(x,y,-z)$ and which has $\gamma$ as the set of points fixed by $\zr$.
We are interested in maps of Teichm\"{u}ller space induced by
reflections.

We consider these maps of Teichm\"{u}ller space in this paragraph.  We
maintain the setting of Section~\ref{sec:defns}.  The boundary of the
parallelogram with vertices 0, $\zl_2$, $\zm_2$ and $\zl_2+\zm_2$ maps
under $p_2\circ q_2$ to a simple closed curve $\zg$ in $S^2$.  Let
$\zr$ be a reflection of $S^2$ about $\zg$.  Let $\zs_\zr\co
\widehat{\bQ}\to \widehat{\bQ}\cup \{o\}$ be the induced map on
slopes, and let $\zS_\zr\co \bH\to \bH$ be the induced map on
Teichm\"{u}ller space.  Let $\zt\in \bH$, and let $\zt'=\zS_\zr(\zt)$.
We see that $\zs_\zr(0)=0$.  So
$\text{mod}_\zt(0)=\text{mod}_{\zt'}(0)$.  This and
Lemma~\ref{lemma:mod} and the discussion at the beginning of
Section~\ref{sec:horoballs} imply that $\zt$ and $\zt'$ are on the
same horocycle of $\bH$ at $\infty$.  In the same way $\zt$ and $\zt'$
are on the same horocycle of $\bH$ at 0.  But distinct horocycles meet
in at most two points.  So from this alone we conclude that either
$\zt'=\zt$ or $\zt'=-\overline{\zt}$.  Since in general
$\zs_\zr(x)=-x$, it follows that $\zt'=-\overline{\zt}$.  So
$\zS_\zr(\zt)=-\overline{\zt}$ for every $\zt\in \bH$.

The discussion of the previous paragraph can be generalized as
follows.  Let $\zg$ be a simple closed curve in $S^2$, and let $\zr$
be a reflection of $S^2$ about $\zg$.  Suppose that $\zr(P_2)=P_2$.
The number of elements of $P_2$ fixed by $\zr$ is either 4, 2 or 0.
The three possibilities are illustrated in Figure~\ref{fig:fixrho}
with $\zg$ drawn as an equator.  In every case there exist two
essential simple closed curves in $S^2\setminus P_2$ with distinct
slopes which are fixed by $\zr$.  Two such curves are drawn in the
first two parts of Figure~\ref{fig:fixrho}.  In the third part $\zg$
is one of these two curves.  If these two curves have slopes
$\frac{p}{q}$ and $\frac{r}{s}$, then $\zs_\zr$ is the reflection of
$\widehat{\bQ}$ which fixes $\frac{p}{q}$ and $\frac{r}{s}$.
Similarly, $\zS_\zr$ is the reflection of $\bH$ which fixes the
geodesic with endpoints $-\frac{q}{p}$ and $-\frac{s}{r}$.

\begin{figure}
\begin{center} \includegraphics{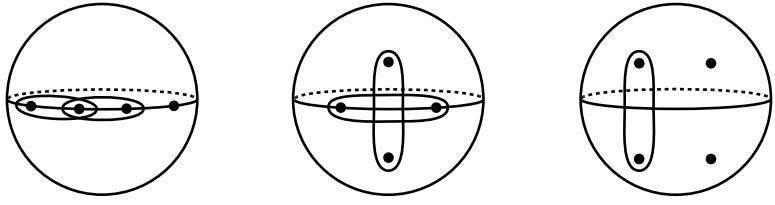}
\caption{ The fixed point set $\zg$ and $P_2$.}
\label{fig:fixrho}
\end{center}
\end{figure}

\begin{thm}\label{thm:rflns} Let $f$ be a NET map in the setting of
Section~\ref{sec:defns}.  Let $\frac{p}{q},\frac{r}{s}\in
\widehat{\bQ}$.  Suppose that $\zs_f(\frac{p}{q})\ne o$,
$\zs_f(\frac{r}{s})\ne o$ and $\zs_f(\frac{p}{q})\ne
\zs_f(\frac{r}{s})$.  Let $\zl=q \zl_2+p \zm_2$ and $\zm=s \zl_2+r
\zm_2$.  Let $d$, respectively $d'$, be the order of the image of
$\zl$, respectively $\zm$, in $\zL_2/\zL_1$.  Suppose that $(\zl,\zm)$
is a basis of $\zL_2$ and that $(d \zl,d' \zm)$ is a basis of $\zL_1$.
Suppose that $q_1^{-1}(p_1^{-1}(P_2))$ is invariant under the
reflection of $\bR^2$ given by $x \zl+y \zm\mapsto (2d-x)\zl+y \zm$,
where $x$ and $y$ are real numbers.  Let $\zr_1$ be the reflection of
$\bH$ about the geodesic whose endpoints are
$-\zs_f(\frac{p}{q})^{-1}$ and $-\zs_f(\frac{r}{s})^{-1}$. Let $\zr_2$
be the reflection of $\bH$ about the geodesic whose endpoints are
$-\frac{q}{p}$ and $-\frac{s}{r}$.  Then
  \begin{equation*}
\zS_f\circ \zr_2=\zr_1\circ \zS_f.
  \end{equation*}
\end{thm}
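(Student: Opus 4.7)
The plan is to construct reflections $\zr$ of the source sphere $\bR^2/\zG_1$ and $\zr'$ of the target sphere (both identified with $S^2$ via $\zf$) that each preserve $P_2$ and satisfy the intertwining $f \circ \zr = \zr' \circ f$, and then to identify the induced maps on Teichm\"{u}ller space as $\zS_\zr = \zr_1$ and $\zS_{\zr'} = \zr_2$. Since $\zS$ is contravariant with respect to composition, applying it to the intertwining will give $\zS_\zr \circ \zS_f = \zS_f \circ \zS_{\zr'}$, i.e.\ $\zr_1 \circ \zS_f = \zS_f \circ \zr_2$, which is equivalent to the stated equation. I build the reflections by descending the Euclidean involution $R\co \bR^2 \to \bR^2$ defined by $R(x\zl + y\zm) = (2d-x)\zl + y\zm$, whose fixed axis is the line $\{d\zl + y\zm : y \in \bR\}$.

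First I would verify that $R$ normalizes both $\zG_1$ and $\zG_2$. Factoring $R = T \circ S$ with $T$ the translation by $2d\zl$ and $S$ the linear involution sending $\zl \mapsto -\zl$, $\zm \mapsto \zm$, the basis hypotheses give $2d\zl \in \zL_j$ (so $T$ lies in $\zG_j$) and $S(\zL_j) = \zL_j$ for $j=1,2$, whence a short calculation on a generator $v \mapsto 2\za \pm v$ of $\zG_j$ shows that its $R$-conjugate has the same form with $\za$ replaced by $S(\za)$ or $S(\za) + 2d\zl$, still in $\zL_j$. Hence $R$ descends to reflections $\bar{R}_j$ of $\bR^2/\zG_j$, each of which preserves $P_2$: for $j=2$ because $R(\zL_2) = \zL_2$, and for $j=1$ by the invariance hypothesis on $q_1^{-1}(p_1^{-1}(P_2))$. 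Setting $\zr := \bar{R}_1$ and $\zr' := \zf \circ \bar{R}_2 \circ \zf^{-1}$, the intertwining follows from the factorization $f = \zf \circ \pi_{12}$ (where $\pi_{12}\co \bR^2/\zG_1 \to \bR^2/\zG_2$ is the map induced by the identity on $\bR^2$): since both $\bar{R}_j$ descend from $R$, we have $\pi_{12} \circ \bar{R}_1 = \bar{R}_2 \circ \pi_{12}$, and consequently $f \circ \zr = \zf \circ \bar{R}_2 \circ \pi_{12} = \zr' \circ f$.

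The main obstacle is identifying $\zS_\zr$ and $\zS_{\zr'}$ as the specific reflections $\zr_1$ and $\zr_2$ of the statement. For $\zr'$ this is direct: its fixed-point curve in the target, pulled back by $\zf^{-1}$ to $\bR^2/\zG_2$, is $(p_2 \circ q_2)(\{d\zl + y\zm\})$, whose canonical lift has direction $\zm = s\zl_2 + r\zm_2$ and hence target-slope $\frac{r}{s}$; the family of target curves with $(p_2 \circ q_2)$-lift parallel to $\zl$ is likewise preserved setwise by $\bar{R}_2$, contributing second preserved slope $\frac{p}{q}$, so $\zS_{\zr'} = \zr_2$. For $\zr$ I would avoid computing source-slopes directly from $\zf$ and instead exploit the intertwining itself: the $f$-image of the fixed-point curve $C$ of $\zr$ lies in the fixed set of $\zr'$ and has target-slope $\frac{r}{s}$ by what was just computed, so by the very definition of $\zs_f$ the source-slope of the essential, nonperipheral preimage component $C$ must be $\zs_f(\frac{r}{s})$; the parallel argument on the $\zl$-direction family gives source-slope $\zs_f(\frac{p}{q})$, whence $\zS_\zr = \zr_1$. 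A technical subtlety is that the axis $\{d\zl + y\zm\}$ may contain points of $q_1^{-1}(p_1^{-1}(P_2))$, so $C$ can meet $P_2$; this is handled by the trichotomy of $P_2$-preserving reflections recalled just before the theorem, and in each of the three cases the two preserved essential slopes are as claimed.
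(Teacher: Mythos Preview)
Your proof is correct and follows essentially the same route as the paper: both descend the single Euclidean reflection $R(x\zl+y\zm)=(2d-x)\zl+y\zm$ through $\zG_1$ and $\zG_2$ to obtain sphere reflections $r_1$ and $r_2$ satisfying $r_2\circ f=f\circ r_1$, then identify their induced Teichm\"{u}ller maps as $\zr_1$ and $\zr_2$ using the slopes of the curves they fix. Your write-up is more explicit than the paper's about why $R$ normalizes the $\zG_j$, about the factorization $f=\zf\circ\pi_{12}$, and about the trichotomy when the fixed curve meets $P_2$, but the underlying argument is the same.
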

  \begin{proof} The situation is as in Figure~\ref{fig:degscmpts},
except that now the vertices of the large parallelogram are all
elements of $\zL_1$.  The discussion preceding Theorem~\ref{thm:rflns}
shows that $\zr_2$ is induced by a reflection $r_2$ of $S^2$ about the
simple closed curve which is the image under $p_2\circ q_2$ of the
boundary of the parallelogram whose vertices are 0, $\zl$, $\zm$ and
$\zl+\zm$.  Likewise let $r_1$ be a reflection of $S^2$ about the
image under $p_1\circ q_1$ of the boundary of the parallelogram whose
vertices are 0, $d \zl$, $d' \zm$ and $d \zl+d' \zm$.  In other words,
we may take $r_1$ to be the reflection of $S^2$ induced by the
reflection of $\bR^2$ given by $x \zl+y \zm\mapsto (2d-x)\zl+y \zm$,
where $x$ and $y$ are real numbers.  The assumptions imply that $r_1$
restricts to a homeomorphism of $S^2\setminus P_2$.  It fixes
essential simple closed curves with slopes $\zs_f(\frac{p}{q})$ and
$\zs_f(\frac{r}{s})$, so the Teichm\"{u}ller map which it induces is
$\zr_1$.  It is clear that $r_2\circ f=f\circ r_1$.  Thus $\zS_f\circ
\zr_2=\zr_1\circ \zS_f$.  This proves Theorem~\ref{thm:rflns}.

\end{proof}

\begin{cor}\label{cor:rflns}  In the situation of
Theorem~\ref{thm:rflns}, the Teichm\"{u}ller map $\zS_f$ maps the
geodesic in $\bH$ with endpoints $-\frac{q}{p}$ and $-\frac{s}{r}$ to
the geodesic with endpoints $-\zs_f(\frac{p}{q})^{-1}$ and
$-\zs_f(\frac{r}{s})^{-1}$.
\end{cor}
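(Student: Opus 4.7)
The plan is to deduce this corollary almost immediately from Theorem~\ref{thm:rflns} by using the characterization of a hyperbolic geodesic as the fixed-point set of the reflection across it. Specifically, let $g_2$ denote the geodesic in $\bH$ with endpoints $-\frac{q}{p}$ and $-\frac{s}{r}$, and let $g_1$ denote the geodesic with endpoints $-\zs_f(\frac{p}{q})^{-1}$ and $-\zs_f(\frac{r}{s})^{-1}$. By construction, $\zr_2$ is the reflection of $\bH$ about $g_2$, so $\Fix(\zr_2)=g_2$; likewise $\Fix(\zr_1)=g_1$.

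The key step is then a one-line computation. Given $\zt\in g_2$, we have $\zr_2(\zt)=\zt$, so
  \begin{equation*}
\zr_1(\zS_f(\zt))=\zS_f(\zr_2(\zt))=\zS_f(\zt)
  \end{equation*}
by the functional equation $\zS_f\circ \zr_2=\zr_1\circ \zS_f$ from Theorem~\ref{thm:rflns}. Hence $\zS_f(\zt)\in \Fix(\zr_1)=g_1$, which gives the inclusion $\zS_f(g_2)\subseteq g_1$ — that is, the statement of the corollary.

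There is essentially no obstacle here: the content of the corollary is already packaged in Theorem~\ref{thm:rflns}, and it only remains to extract it by taking fixed sets on both sides of the intertwining relation. The one point worth stating cleanly in the write-up is why $g_1$ and $g_2$ are genuinely geodesics (and not single points or degenerate), which follows from the hypothesis $\zs_f(\frac{p}{q})\ne \zs_f(\frac{r}{s})$ ensuring that $-\zs_f(\frac{p}{q})^{-1}$ and $-\zs_f(\frac{r}{s})^{-1}$ are distinct points of $\partial \bH$, so that $\zr_1$ is a genuine reflection with a geodesic fixed set.
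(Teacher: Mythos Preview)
Your proposal is correct and is precisely the intended argument: the paper states the corollary without proof immediately after Theorem~\ref{thm:rflns}, treating it as an immediate consequence of the intertwining relation $\zS_f\circ \zr_2=\zr_1\circ \zS_f$ via the fixed-set characterization of the reflections, exactly as you wrote.
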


\section{$\text{Aff($f$)}$ and functional equations}
\label{sec:fnleqns}\nosubsections

Let $f$ be a NET map in the setting of Section~\ref{sec:defns}.  The
results of Sections~\ref{sec:dehn} and \ref{sec:rflns} show that
$\zS_f$ satisfies certain ``functional equations''.  By this we mean
that there are choices of M\"{o}bius transformations $\zv$ and $\zj$
(which might reverse orientation) such that $\zS_f\circ \zv=\zj\circ
\zS_f$.  The aim of this section is to unify and clarify these
results.

Let $i\in \{1,2\}$.  Every homeomorphism from $S^2\setminus P_i$ to
$S^2\setminus P_i$ is isotopic to a homeomorphism which lifts via
$p_i\circ q_i$ to an affine map from $\bR^2$ to $\bR^2$ taking $\zL_i$
bijectively to $\zL_i$.  This is essentially the content of
Proposition 2.7 of \cite{FM} by Farb and Margalit.  This leads us to
define the \emph{affine group} of $f$ to be the set $\text{Aff}(f)$
of all affine transformations of $\bR^2$ which bijectively stabilize
$\zL_1$, $\zL_2$ and $q_1^{-1}(p_1^{-1}(P_2))$.  We define the
\emph{general linear group} $\text{GL}(f)$ and the \emph{special
linear group} $\text{SL}(f)$ of $f$ analogously.  If $\zd\in
\text{Aff}(f)$ and if $\zg$ is a rotation of order 2 in $\zG_1$ which
fixes $\zl\in \zL_1$, then $\zd \zg \zd^{-1}$ is a rotation of order 2
which fixes $\zd(\zl)$.  Since these rotations generate $\zG_1$, it
follows that $\text{Aff}(f)$ normalizes $\zG_1$.  Likewise
$\text{Aff}(f)$ normalizes $\zG_2$.  Because $\text{Aff}(f)$
normalizes $\zG_1$ and $\zG_2$, its action on $\bR^2$ induces actions
on $\bR^2/\zG_1$ and $\bR^2/\zG_2$.  Let $\zd\in \text{Aff}(f)$, let
$\zd_1$ be the map which it induces on $\bR^2/\zG_1$ and let $\zd_2$
be the map which it induces on $\bR^2/\zG_2$.  Since the lift of $f$
to $\bR^2$ is the identity map, this lift commutes with $\zd$.  Thus
$f\circ \zd_1=\zd_2\circ f$.  The assumptions imply that both $\zd_1$
and $\zd_2$ stabilize $P_2$, and so both $\zd_1$ and $\zd_2$ induce
maps on the Teichm\"{u}ller space of $S^2\setminus P_2$.  This leads
to the functional equation $\zS_f\circ \zS_{\zd_2}=\zS_{\zd_1}\circ
\zS_f$.  Of course, both $\zS_{\zd_1}$ and $\zS_{\zd_2}$ are
M\"{o}bius transformations.  We have proved the first assertion of the
following theorem.  The second assertion concerning $\zS_{\zd_1}$ and
$\zS_{\zd_2}$ follows from the discussion between here and
Example~\ref{ex:fnleqna}.

\begin{thm}\label{thm:fnleqn} Let $f$ be a NET map in the setting of
Section~\ref{sec:defns}.  Let $\zd\in \text{Aff}(f)$.  Let $\zd_i$ be
the map which $\zd$ induces on $\bR^2/\zG_i$ for $i\in \{1,2\}$.  Then
$\zS_f\circ \zS_{\zd_2}=\zS_{\zd_1}\circ \zS_f$.  If $\zd_i$
preserves, respectively reverses, orientation, then $\zS_{\zd_i}$ is a
M\"{o}bius transformation from $\text{PSL}(2,\mathbb{Z})$,
respectively $\text{PGL}(2,\mathbb{Z})\setminus
\text{PSL}(2,\mathbb{Z})$, for $i\in \{1,2\}$.
\end{thm}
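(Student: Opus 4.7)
The first assertion is essentially contained in the paragraph preceding the theorem statement; the proof is a matter of organizing the given setup. The verifications needed are: (i)~$\zd\in\text{Aff}(f)$ normalizes each $\zG_i$, because $\zG_i$ is generated by order-two rotations centered at the points of $\zL_i$ and $\zd$ stabilizes $\zL_i$; hence $\zd_i$ is a well-defined self-homeomorphism of $\bR^2/\zG_i$. (ii)~The hypothesis that $\zd$ stabilizes $q_1^{-1}(p_1^{-1}(P_2))$ ensures both $\zd_1$ and $\zd_2$ preserve $P_2$ setwise (after the identifications of $\bR^2/\zG_1$ and $\bR^2/\zG_2$ with $S^2$ via $\zf$), so they act on the Teichm\"uller space of $(S^2,P_2)$. (iii)~Since $\zd$ is a common lift of $\zd_1$ and $\zd_2$ to $\bR^2$ while the identity lifts $f$, the compositions $f\circ \zd_1$ and $\zd_2\circ f$ share $\zd$ as their lift and hence coincide as self-maps of $S^2$. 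Contravariance $\zS_{g\circ h}=\zS_h\circ \zS_g$ of the Teichm\"uller functor then gives $\zS_f\circ \zS_{\zd_2}=\zS_{\zd_1}\circ \zS_f$.

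For the second assertion, I would compute $\zS_{\zd_2}$ directly from the linear part of $\zd$. Write $\zd(x)=Ax+B$ with $A\in \text{GL}(2,\bR)$; since $\zd$ stabilizes $\zL_2$, the matrix of $A$ in the basis $(\zl_2,\zm_2)$ lies in $\text{GL}(2,\bZ)$ with $\det A=\pm 1$ matching the orientation of $\zd_2$. Using the parametrization $\zt\mapsto$ (complex structure pushed through $p_2\circ q_2\circ \zv_\zt$) from Section~\ref{sec:horoballs}, where $\zv_\zt\co \bC\to \bR^2$ is $\bR$-linear with $\zv_\zt(1)=2\zl_2$, $\zv_\zt(\zt)=2\zm_2$, post-composition by $\zd_2$ replaces $\zv_\zt$ by $A\circ \zv_\zt$ modulo translation. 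The Teichm\"uller parameter $\zt'=\zS_{\zd_2}(\zt)$ is characterized by the requirement that $\zv_{\zt'}^{-1}\circ A\circ \zv_\zt\co \bC\to \bC$ be $\bC$-linear (respectively $\bC$-antilinear when $\det A=-1$); imposing this with $A=\left[\begin{smallmatrix}a&b\\c&d\end{smallmatrix}\right]$ yields the formula $\zt'=(a\zt-b)/(d-c\zt)$, respectively the $\bar\zt$-analogue, exhibiting $\zS_{\zd_2}$ as the action on $\bH$ of the integer matrix $\left[\begin{smallmatrix}a&-b\\-c&d\end{smallmatrix}\right]$ of determinant $\det A=\pm 1$. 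Thus $\zS_{\zd_2}\in \text{PSL}(2,\bZ)$ when $\det A=1$ and in $\text{PGL}(2,\bZ)\setminus \text{PSL}(2,\bZ)$ when $\det A=-1$.

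\textbf{Main obstacle.} The analogous identification of $\zS_{\zd_1}$ is the subtle point, since the paper's parametrization of $\bH$ is adapted to the $\bR^2/\zG_2$ cover and the basis $(\zl_2,\zm_2)$, whereas $\zd_1$ acts naturally through the $\bR^2/\zG_1$ cover; the identification map $\zf\co \bR^2/\zG_2\to \bR^2/\zG_1$ that bridges the two is not conformal in general. I would handle this by reparametrizing $\bH$ using any ordered basis $(\zl_1,\zm_1)$ of $\zL_1$ together with the cover $p_1\circ q_1$, running the same linear-algebra computation with $\zd_1$ in place of $\zd_2$ (which again yields an integer $\pm 1$-determinant matrix, since $\zd$ stabilizes $\zL_1$ as well), and then transferring back to the original parametrization via the fixed $\text{PGL}(2,\bZ)$-element comparing the two parametrizations of $\bH$. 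Since conjugation preserves $\text{PGL}(2,\bZ)$ and the orientation-type subgroup $\text{PSL}(2,\bZ)$, the conclusion for $\zS_{\zd_1}$ follows.
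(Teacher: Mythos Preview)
Your argument for the first assertion and for $\zS_{\zd_2}$ is correct and close to the paper's; the paper derives the formula for $\zS_{\zd_2}$ by tracking moduli of curve families (via Lemma~\ref{lemma:mod} and Corollary~\ref{cor:horocycle}) rather than by your direct linear-algebra computation with $\zv_\zt$, but either route lands in $\text{PGL}(2,\bZ)$.

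You correctly flag $\zS_{\zd_1}$ as the subtle point, but your proposed fix does not work. The cover $p_1\circ q_1$ is branched over $P_1$, not $P_2$, so the parametrization of $\bH$ it produces together with a basis $(\zl_1,\zm_1)$ of $\zL_1$ is a parametrization of $\text{Teich}(S^2,P_1)$. In the non-Euclidean case $P_1\ne P_2$, and your linear-algebra computation then yields the action of $\zd_1$ on $\text{Teich}(S^2,P_1)$---a genuinely different map from $\zS_{\zd_1}$, which is the action on $\text{Teich}(S^2,P_2)$. There is no ``fixed $\text{PGL}(2,\bZ)$-element'' conjugating one into the other: even though a homeomorphism $h\co (S^2,P_1)\to (S^2,P_2)$ does induce a $\text{PGL}(2,\bZ)$-identification of the two Teichm\"uller spaces, that identification intertwines the $\zd_1$-action on $\text{Teich}(S^2,P_2)$ with the $h^{-1}\zd_1 h$-action on $\text{Teich}(S^2,P_1)$, not with the $\zd_1$-action, and $h^{-1}\zd_1 h$ has no tractable lift to $\bR^2$.

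The paper instead computes the action of $\zd_1$ on slopes of curves in $S^2\setminus P_2$ directly: lift a curve via $p_1\circ q_1$ (getting only a piecewise-linear arc in general), apply $\zd$, and then use the spin-mirror machinery of Section~\ref{sec:slopefn} to read off the resulting slope in $S^2\setminus P_2$. Doing this for two independent slopes produces an integer matrix of determinant $\pm 1$. If you only need the bare conclusion of the theorem and not a computational recipe, you can instead observe that by the paper's citation of Proposition~2.7 of Farb--Margalit, $\zd_1$ is isotopic rel~$P_2$ to a homeomorphism that lifts via $p_2\circ q_2$ to an affine map stabilizing $\zL_2$; applying your own $\zS_{\zd_2}$-computation to that lift shows $\zS_{\zd_1}\in\text{PGL}(2,\bZ)$, with the orientation-preserving case giving $\text{PSL}(2,\bZ)$.
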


In this and the next two paragraphs we discuss computations involving
Theorem~\ref{thm:fnleqn}.  Let $f$ be a NET map in the setting of
Section~\ref{sec:defns}.  Let $\zd\in \text{Aff}(f)$.  We want
explicit forms for the M\"{o}bius transformations $\zd_1$ and $\zd_2$.
For this it suffices to understand how $\zd_1$ and $\zd_2$ act on the
boundary of $\bH$.

We first consider $\zd_2$.  We express $\zd$ in terms of the chosen
basis $(\zl_2,\zm_2)$ of $\zL_2$, and we calculate slopes of lines in
$\bR^2$ with respect to this basis.  Let $p$, $q$, $p'$, $q'$ be
integers as usual so that $\zd^{-1}$ maps lines in $\bR^2$ with slope
$\frac{p}{q}$ to lines with slope $\frac{p'}{q'}$.  Let $\zt\in \bH$,
and suppose that $\zS_{\zd_2}(\zt)=\zt'$.  Then
$\text{mod}_\zt(\frac{p}{q})=\text{mod}_{\zt'}(\frac{p'}{q'})$.  Using
Lemma~\ref{lemma:mod} and Corollary~\ref{cor:horocycle}, we see that
$\zS_{\zd_2}(-\frac{q}{p})=-\frac{q'}{p'}$.  If $\zd$ preserves
orientation and if $\left[\begin{smallmatrix}a & b \\ c &
d \end{smallmatrix}\right]$ is the matrix of the linear part of $\zd$,
then the inverse of the M\"{o}bius transformation $z\mapsto
\frac{az+b}{cz+d}$ maps $\frac{q}{p}$ to $\frac{q'}{p'}$.  Combining
this with the previous statement yields that
$\zS_{\zd_2}(z)=\frac{dz+b}{cz+a}$.  If $\zd$ reverses orientation,
then $\zS_{\zd_2}(z)=\frac{d \overline{z}+b}{c \overline{z}+a}$.  This
determines $\zS_{\zd_2}$ in terms of $\zd$.  In particular,
$\zS_{\zd_2}$ comes from $\text{PGL}(2,\mathbb{Z})$.

Whereas computing $\zS_{\zd_2}$ is easy, computing $\zS_{\zd_1}$ is
usually not, although see Examples~\ref{ex:fnleqna} and
\ref{ex:fnleqnb} for two easy special cases.  Its computation has much
in common with the computation of $\zs_f$ using spin mirrors.  In
general, we begin with an essential simple closed curve $\zg$ in
$S^2\setminus P_2$.  We lift it to $\bR^2$ using $p_1\circ q_1$.
Although we may assume that this lift is piecewise linear, we may not
assume that it is a line segment.  We wish to understand how $\zd_1$
acts on slopes, so we apply $\zd$ to this lift.  We then use spin
mirrors as in Section~\ref{sec:slopefn} to compute the slope of
$\zd_1(\zg)$.  Doing this for curves $\zg$ with slopes 0 and $\infty$
obtains two corresponding slopes $\frac{c}{a}$ and $\frac{d}{b}$,
where $(a,c)$ and $(b,d)$ form a basis of $\mathbb{Z}^2$.  We multiply
one of these vectors by $-1$ if necessary so that
$\left|\begin{smallmatrix}a & b \\ c & d \end{smallmatrix}\right|=1$.
If $\zd$ preserves orientation, then its action on slopes is given by
the matrix $\left[\begin{smallmatrix}a & b \\ c & d
\end{smallmatrix}\right]$.  If $\zd$ reverses orientation, then the
matrix is $\left[\begin{smallmatrix}a & -b \\ c & -d
\end{smallmatrix}\right]$.  As for $\zd_2$, it follows that
$\zS_{\zd_1}(z)=\frac{dz+b}{cz+a}$ if orientation is preserved and
$\zS_{\zd_1}(z)=-\frac{d \overline{z}+b}{c \overline{z}+a}$ if
orientation is reversed.  In particular, $\zS_{\zd_1}$ comes from
$\text{PGL}(2,\mathbb{Z})$.

In the following examples, we apply Theorem~\ref{thm:fnleqn} to our
main example of a NET map.

\begin{ex}\label{ex:fnleqna} For our main example of a NET map, we
have that $\zL_2=\bZ^2$ and $\zL_1=\left<(2,-1),(0,5)\right>$.
Furthermore, $q_1^{-1}(p_1^{-1}(P_2))$ consists of the six cosets of
$2\zL_1$ represented by $(0,0)$, $(0,5)$, $(2,0)$, $(2,-2)$, $(2,3)$
and $(2,5)$.  Let $\zd\co \bR^2\to \bR^2$ be the linear map with
matrix $\left[\begin{smallmatrix}1 & 0 \\ 5 & 1
\end{smallmatrix}\right]$ with respect to the standard basis.  Then
$\zd\in \text{SL}(f)$.  The discussion following
Theorem~\ref{thm:fnleqn} shows that $\zS_{\zd_2}(z)=\frac{z}{5z+1}$.
Let $B_1$ be the set of usual spin mirrors relative to $p_1\circ q_1$
for the main example.  Because $\zd$ fixes $(0,1)$ and stabilizes
every vertical line, it stabilizes $B_1$.  Hence $\zd$ restricts to a
homeomorphism of $\bR^2\setminus B_1$ to itself.  It follows that the
matrix which $\zd_1$ determines for its action on slopes of essential
curves in $S^2\setminus P_2$ relative to the basis $(\zl_2,\zm_2)$ is
the same matrix which $\zd_1$ determines for its action on $\zL_1$
relative to the basis $(\zl_1,\zm_1)$.  This matrix is
$\left[\begin{smallmatrix}1 & 0 \\ 2 & 1 \end{smallmatrix}\right]$.
We conclude that
  \begin{equation*}
\zS_f\left(\frac{z}{5z+1}\right)=\frac{\zS_f(z)}{2 \zS_f(z)+1}.
  \end{equation*}
The action of $\zS_f$ and $\zs_f$ are related by conjugation by
$z\mapsto -\frac{1}{z}$.  Hence $\zs_f(z-5)=\zs_f(z)-2$, and so
$\zs_f(z+5)=\zs_f(z)+2$.  The square of $\zd_1$ is a Dehn twist about
an essential simple closed curve in $S^2\setminus P_2$ with slope
$\infty$.  Compare this functional equation with that of
Example~\ref{ex:fnleqns}.
\end{ex}

\begin{ex}\label{ex:fnleqnb}  We continue with the main
example.  Let $\zd\co \bR^2\to \bR^2$ be the linear map with matrix
$\left[\begin{smallmatrix}-1 & 0 \\ 1 & 1 \end{smallmatrix}\right]$.
Then $\zd(2,-1)=(-2,1)$ and $\zd(0,1)=(0,1)$.  So $\zd$ reverses
orientation and it fixes the lines generated by $(2,-1)$ and $(0,1)$.
It stabilizes $\zL_1$ and $\zL_2$.  Furthermore a glance at
Figure~\ref{fig:maintess} shows that it stabilizes
$q_1^{-1}(p_1^{-1}(P_2))$.  So $\zd\in \text{GL}(f)$.  We have that
$\zS_{\zd_2}(z)=\frac{\overline{z}}{\overline{z}-1}$.  The map on
slopes of essential simple closed curves in $S^2\setminus P_2$ induced
by $\zd_1$ fixes 0 and $\infty$ and takes 1 to $-1$.  Hence
$\zS_{\zd_1}(z)=-\overline{z}$.  We conclude that
  \begin{equation*}
\zS_f\left(\frac{\overline{z}}{\overline{z}-1}\right)=-\overline{\zS}_f(z),
  \end{equation*}
and
  \begin{equation*}
\zs_f(-z-1)=-\zs_f(z).
  \end{equation*}
\end{ex}

\begin{remark}\label{remark:graph} We return to the graph of the slope
function $\zs_f$ for the main example which appears at the end of
Section~\ref{sec:slopefn}.  A prominent fuzzy line appears with
positive slope less than 1.  Suppose that this line is given by
$y=mx+b$.  Example~\ref{ex:fnleqna} implies that
$\zs_f(x+5)=\zs_f(x)+2$.  The line given by $y=mx+b$ should also
satisfy this functional equation, which implies that
$m=\frac{2}{5}$. Example~\ref{ex:fnleqnb} implies that
$\zs_f(-x-1)=-\zs_f(x)$.  The line should also satisfy this
functional equation, and so $b=\frac{1}{5}$.  So the special line
with positive slope less than 1 is given by
$y=\frac{2}{5}x+\frac{1}{5}$.
\end{remark}

\section{Constant Teichm\"{u}ller maps }\label{sec:constant}
\nosubsections

This section deals with NET maps whose associated Teichm\"{u}ller maps
are constant.  We aim to find an algebraic formulation of what it
means for the Teichm\"{u}ller map of a NET map to be constant.  The
results of this section are extended in Saenz Maldonado's thesis
\cite{SM}.  We begin with the following lemma.

\begin{lemma}\label{lemma:twogenr}  Let $\zv\co \bZ^2\to A$
be a surjective group homomorphism from $\bZ^2$ to a finite Abelian
group $A$.  Let $a\in A$, and let $B$ be a cyclic subgroup of $A$.
Then the quotient group $A/B$ is cyclic and the image of $a$ in $A/B$
generates $A/B$ if and only if there exists a basis of $\bZ^2$
consisting of elements $\za$ and $\za'$ with $\zv(\za)=a$ and
$\zv(\za')$ a generator of $B$.
\end{lemma}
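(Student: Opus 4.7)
The plan is as follows. The backward direction is immediate: if $(\za,\za')$ is a basis of $\bZ^2$ with $\zv(\za)=a$ and $\zv(\za')$ generating $B$, then surjectivity of $\zv$ forces $\{a,\zv(\za')\}$ to generate $A$; reducing modulo $B$, and noting $\zv(\za')\in B$, shows that $a+B$ generates $A/B$, which is therefore cyclic.

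For the forward direction I will first set up a basis of $\bZ^2$ adapted to $B$. Let $\zp\co A\to A/B$ denote the projection. The composition $\zp\zv\co \bZ^2\to A/B$ is surjective, and $A/B$ is cyclic of order $n$ generated by $a+B$. Identifying $A/B$ with $\bZ/n\bZ$ via $a+B\mapsto 1$, the surjection $\zp\zv$ is given by a row $(u,v)$ over $\bZ/n\bZ$ with $\gcd(u,v,n)=1$. Using surjectivity of the reduction map $\text{SL}(2,\bZ)\to \text{SL}(2,\bZ/n\bZ)$ and transitivity of its action on unimodular rows, I will produce a basis $(\zg_1,\zg_2)$ of $\bZ^2$ with $\zp\zv(\zg_1)=0$ and $\zp\zv(\zg_2)=a+B$. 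Equivalently, $L:=\zv^{-1}(B)=\bZ\zg_1+n\bZ\zg_2$, and $\zv(\zg_2)=a+b_0$ for some $b_0\in B$.

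The crux of the argument will be to choose $\za'\in L$ of the form $\za'=\zg_1+jn\zg_2$ (which for every $j\in\bZ$ completes $\zg_2$ to a basis of $\bZ^2$, since the determinant in $(\zg_1,\zg_2)$-coordinates is $-1$) with $\zv(\za')$ a generator of $B$. Write $\zv(\za')=\zv(\zg_1)+jn\zv(\zg_2)$; both summands lie in $B$, and surjectivity of $\zv$ gives $\zv(L)=B$, so $\langle\zv(\zg_1),\,n\zv(\zg_2)\rangle=B$. Because $B$ is cyclic, a standard Chinese Remainder argument (choose the residue of $j$ modulo each prime dividing $|B|$, using the hypothesis $\gcd(u,v,|B|)=1$) produces an integer $j$ making $\zv(\za')$ a generator of $B$. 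This is the main obstacle I expect: it is essential here that $B$ be cyclic, and that $\zv(\zg_1)$ together with $n\zv(\zg_2)$ already generate $B$, for otherwise no linear combination could hit a generator.

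Finally, I will correct $\zg_2$ by setting $\za=\zg_2-s\za'$ for an integer $s$ satisfying $s\zv(\za')=b_0$; such an $s$ exists because $b_0\in B$ and $\zv(\za')$ generates $B$. The change of basis from $(\zg_2,\za')$ to $(\za,\za')$ is unipotent, so $(\za,\za')$ remains a basis of $\bZ^2$, and by construction $\zv(\za)=a$ while $\zv(\za')$ generates $B$, completing the forward direction.
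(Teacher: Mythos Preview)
Your proof is correct, modulo a small notational slip: in the parenthetical ``using the hypothesis $\gcd(u,v,|B|)=1$'' you have reused the letters $(u,v)$ from the earlier step, where they referred to the map $\zp\zv$ modulo $n=|A/B|$; what you need here, and what you have in fact already established, is that $\zv(\zg_1)$ and $n\zv(\zg_2)$ together generate the cyclic group $B$, which is exactly the input for the Chinese Remainder argument you sketch.

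Your route is genuinely different from the paper's. The paper passes to the invariant factor decomposition $A\cong\bZ/m\bZ\oplus\bZ/n\bZ$, lifts to the auxiliary free module $C=(\bZ/n\bZ)^2$, chooses preimages $c,c'\in C$ of $a$ and a generator of $B$, and then adjusts $c,c'$ by kernel elements prime by prime so that the $2\times 2$ determinant $\det(c,c')$ becomes a unit in $\bZ/n\bZ$; finally it invokes the surjectivity of $\text{SL}(2,\bZ)\to\text{SL}(2,\bZ/n\bZ)$ to lift $(c,c')$ to a basis of $\bZ^2$. You instead work directly with the filtration $0\subset B\subset A$: you first change basis in $\bZ^2$ to make the kernel $L=\zv^{-1}(B)$ standard (this is where you use the surjection $\text{SL}(2,\bZ)\to\text{SL}(2,\bZ/n\bZ)$, at the start rather than the end), then solve the remaining problem entirely inside the cyclic group $B$, and finish with a unipotent correction. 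Your argument avoids the auxiliary group $C$ and the determinant bookkeeping, and makes the two hypotheses ($A/B$ cyclic, $B$ cyclic) visibly correspond to the two stages of the construction; the paper's approach, by contrast, treats everything uniformly through the structure of $(\bZ/n\bZ)^2$. Both proofs ultimately rest on the same two facts: the surjectivity of $\text{SL}(2,\bZ)\to\text{SL}(2,\bZ/n\bZ)$, and a prime-by-prime adjustment via the Chinese Remainder Theorem.
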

  \begin{proof} It is clear that if $\za$ and $\za'$ form a basis of
$\bZ^2$ with $\zv(\za)=a$ and $\zv(\za')$ a generator of $B$, then
$A/B$ is cyclic and the image of $a$ in $A/B$ generates $A/B$.

To prove the converse, suppose that $A/B$ is cyclic and that the image
of $a$ in $A/B$ generates $A/B$.  Since the kernel $K$ of $\zv$ has
finite index in $\bZ^2$, there exists a basis of $\bZ^2$ consisting of
elements $\zb_1$ and $\zb_2$ and positive integers $m$ and $n$ with
$m| n$ such that $m \zb_1$ and $n \zb_2$ form a basis of $K$.
Without loss of generality we assume that $\zb_1=(1,0)$ and
$\zb_2=(0,1)$, so that $(m,0)$ and $(0,n)$ form a basis of $K$.  Hence
$A\cong \bZ/m \bZ\oplus \bZ/n \bZ$.  Let $C=\bZ/n \bZ\oplus \bZ/n
\bZ$.  Let $\zv_1\co \bZ^2\to C$ be the canonical group homomorphism.
By the standard homomorphism theorems of group theory, there exists a
group homomorphism $\zv_2\co C\to A$ such that $\zv=\zv_2\circ \zv_1$.
Let $c$ and $c'$ be elements of $C$ such that $\zv_2(c)=a$ and the
image of $\zv_2(c')$ in $A/B$ generates $A/B$.  Suppose that $c$ and
$c'$ are given in coordinates by $c=(c_1,c_2)$ and $c'=(c'_1,c'_2)$.
The determinant $d=\left|\begin{smallmatrix} c_1& c'_1 \\ c_2 &
c'_2 \end{smallmatrix}\right|$ is then an element of $\bZ/n \bZ$.  We
aim to prove that it is possible to choose $c$ and $c'$ so that $d=1$.

To begin this, let $p$ be a prime such that $p|  m$.  The
assumptions imply that $C/pC\cong A/pA\cong \bZ/p \bZ\oplus \bZ/p \bZ$
and that the images of $c$ and $c'$ in $A/pA$ generate $A/pA$.  So the
images of $c$ and $c'$ in $C/pC$ are linearly independent elements of
this vector space.  Thus the image of $d$ in $\bZ/p \bZ$ is not 0.

Now let $p$ be a prime such that $p| n$ but $p\nmid m$.  The
assumptions imply that $C/pC\cong \bZ/p \bZ\oplus \bZ/p \bZ$, that
$A/pA\cong \bZ/p \bZ$ and that the images of $c$ and $c'$ in $A/pA$
generate $A/pA$.  In this case the images of $c$ and $c'$ in $C/pC$
generate a nonzero subspace of $C/pC$, the image of the kernel of
$\zv_2$ is a 1-dimensional subspace of $C/pC$ and $C/pC$ is the sum of
these two subspaces.  So we may modify $c$ and $c'$ by elements of the
kernel of $\zv_2$ if necessary to make the images of $c$ and $c'$ in
$C/pC$ linearly independent.  Once this is done, the image of $d$ in
$\bZ/p \bZ$ is not 0.

We use the Chinese remainder theorem to modify $c$ and $c'$ as in the
previous paragraph for every prime $p$ such that $p| n$ but $p\nmid
m$. Once this is done, $d$ is a unit modulo $n$.  Now we replace $c'$
by $d^{-1}c'$.  It is still true that $\zv_2(b)=a$ and $\zv_2(c')$
generates $B$.  Furthermore the new determinant is 1.  Now we use the
fact that the canonical group homomorphism from $\text{SL}(2,\bZ)$ to
$\text{SL}(2,\bZ/n \bZ)$ is surjective.  So there exists a basis of
$\bZ^2$ consisting of elements $\za$ and $\za'$ with $\zv_1(\za)=c$
and $\zv_1(\za')=c'$.  Hence $\zv(\za)=a$ and $\zv(\za')$ generates
$B$.

This proves Lemma~\ref{lemma:twogenr}.

\end{proof}

Now we find an algebraic formulation of what it means for the
Teichm\"{u}ller map of a NET map to be constant.  Let $f$ be a NET map
in the setting of Section~\ref{sec:defns}.  Combining statements 1 and
4 of Theorem 5.1 of \cite{BEKP} implies that the Teichm\"{u}ller map
of $f$ is constant if and only if for every essential, nonperipheral
simple closed curve $\zd$ in $S^2\setminus P_2$ every connected
component of $f^{-1}(\zd)$ is either null or peripheral.  This and
Theorem~\ref{thm:degscmpts} lead us to consider the following.  Let
$\zl$ and $\zm$ be elements of $\zL_2$ which form a basis of $\zL_2$.
Let $c_1$, $c_2$, $c_3$, $c_4$ be the coset numbers for
$q_1^{-1}(p_1^{-1}(P_2))$ relative to $\zl$ and $\zm$.  Statement 2 of
Theorem~\ref{thm:degscmpts} now shows that the Teichm\"{u}ller map of
$f$ is constant if and only if $c_2=c_3$ for every choice of $\zl$ and
$\zm$.

This leads us to make a definition.  Let $A$ be a finite Abelian
group.  We say that a subset $H$ of $A$ is \emph{nonseparating} if
and only if it satisfies the following conditions.  First, $H$ is a
disjoint union of the form $H=\{\pm h_1\}\amalg\{\pm h_2\}\amalg\{\pm
h_3\}\amalg\{\pm h _4\}$.  (It is possible that $h_i=-h_i$.)  Let $B$
be a cyclic subgroup of $A$ such that $A/B$ is cyclic.  Let $c_1$,
$c_2$, $c_3$, $c_4$ be the coset numbers for $H$ relative to $B$ and
some generator of $A/B$.  The main condition is that $c_2=c_3$ for
every such choice of $B$ and generator of $A/B$.  We say that $H$ is
nonseparating because it never separates $c_2$ from $c_3$.
Lemma~\ref{lemma:twogenr} and the intervening discussion yield the
following theorem.

\begin{thm}\label{thm:algcformn} Let $f$ be a NET map in the setting
of Section~\ref{sec:defns}.  Then the Teichm\"{u}ller map of $f$ is
constant if and only if $p_1^{-1}(P_2)$ is a nonseparating subset of
$\zL_2/2 \zL_1$.
\end{thm}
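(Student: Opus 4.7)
The plan is to reduce the statement to a direct application of Theorem~\ref{thm:degscmpts}(2) combined with Lemma~\ref{lemma:twogenr}, using the characterization of constancy of $\zS_f$ from \cite{BEKP} quoted just before the theorem. First I would verify that $H = p_1^{-1}(P_2)$, viewed as a subset of $A = \zL_2/2\zL_1$, has the $\pm$-pair structure required by the definition of ``nonseparating.'' The branched cover $p_1\co T_1 \to S^2$ is the quotient map by the involution $x\mapsto -x$ on $T_1 = \bR^2/2\zL_1$, since the quotient $\zG_1/2\zL_1$ acts on $T_1$ in this way. Hence every fiber of $p_1$ is of the form $\{\pm h\}$ for some $h\in T_1$, with $h=-h$ exactly when $h$ is a branch point, i.e.\ when $p_1(h)\in P_1$. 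Since $|P_2|=4$, the set $H$ is a disjoint union of four inverse pairs $\{\pm h_k\}$, as required.

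Next I would translate constancy of $\zS_f$ into a condition on coset numbers. The cited consequence of Theorem~5.1 of \cite{BEKP} says that $\zS_f$ is constant iff, for every essential nonperipheral simple closed curve $\zd$ in $S^2\setminus P_2$, every component of $f^{-1}(\zd)$ is null homotopic or peripheral. Statement~2 of Theorem~\ref{thm:degscmpts} computes the number of essential nonperipheral components as $c_3 - c_2$, where the coset numbers are taken relative to $\zl = q\zl_2 + p\zm_2$ (the primitive vector encoding the slope $\frac{p}{q}$ of $\zd$) and any $\zm$ completing $(\zl,\zm)$ to a basis of $\zL_2$. As $\frac{p}{q}$ ranges over $\widehat{\bQ}$, the vector $\zl$ ranges over all primitive elements of $\zL_2$, and every basis $(\zl,\zm)$ of $\zL_2$ arises by completing such a primitive vector. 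So constancy of $\zS_f$ is equivalent to the condition $c_2 = c_3$ for every basis $(\zl,\zm)$ of $\zL_2$.

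Finally I would match this equivalent condition with the algebraic definition of ``nonseparating.'' A basis $(\zl,\zm)$ of $\zL_2$ produces, via reduction modulo $2\zL_1$, a generator of a cyclic subgroup $B \subseteq A$ together with an element $a \in A$ whose image generates the cyclic quotient $A/B$, and the coset numbers for $(\zl,\zm)$ are by construction the coset numbers of $H$ with respect to $(B, a+B)$. Conversely, Lemma~\ref{lemma:twogenr} applied to the reduction $\zL_2 \to A$ provides, for each valid pair $(B, a+B)$ with $B$ cyclic, $A/B$ cyclic, and $a+B$ generating $A/B$, a basis $(\zl,\zm)$ of $\zL_2$ realizing it. Thus ``$c_2 = c_3$ for every basis of $\zL_2$'' is equivalent to ``$c_2 = c_3$ for every such pair $(B, a+B)$,'' which is the defining property of $H$ being nonseparating. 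The main obstacle is this last translation, but it is essentially immediate from the discussion preceding Theorem~\ref{thm:degscmpts} together with Lemma~\ref{lemma:twogenr}; no further geometric input is needed beyond what the previous sections have already assembled.
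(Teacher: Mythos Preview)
Your proposal is correct and follows essentially the same route as the paper: the paper's proof is precisely the discussion immediately preceding the theorem statement, which invokes Theorem~5.1 of \cite{BEKP} to reduce constancy of $\zS_f$ to the vanishing of essential nonperipheral preimages, applies statement~2 of Theorem~\ref{thm:degscmpts} to rewrite this as $c_2=c_3$ for every basis $(\zl,\zm)$ of $\zL_2$, and then uses Lemma~\ref{lemma:twogenr} to identify this with the nonseparating condition. Your explicit verification of the $\pm$-pair structure of $p_1^{-1}(P_2)$ is a helpful elaboration that the paper leaves implicit in its earlier discussion at the end of Section~\ref{sec:defns}.
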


Theorem~\ref{thm:algcformn} provides a strategy for constructing NET
maps whose Teichm\"{u}ller maps are constant.  The first step, and
only step which is not straightforward, is to construct a finite
Abelian group $A$ generated by two elements with $A/2A\cong \bZ/2
\bZ\oplus \bZ/2 \bZ$ such that $A$ has a nonseparating subset $H$.  We
then construct lattices $\zL_1\subseteq \zL_2\subseteq \bR^2$ such
that $\zL_2/2 \zL_1\cong A$ and use this isomorphism to identify
$\zL_2/2 \zL_2$ with $A$.  We construct an isomorphism from $\zL_2$ to
$\zL_1$, which in effect constructs a Euclidean Thurston map $g$
corresponding to $\zL_1$ and $\zL_2$.  We then construct an
orientation-preserving homeomorphism $h\co S^2\to S^2$ such that
$h(P_2)=p_1(H)$.  As in Section~\ref{sec:twists}, it follows
that $f=h\circ g$ is a NET map if it has four postcritical points
(which it usually does), and Theorem~\ref{thm:algcformn} shows that
its Teichm\"{u}ller map is constant.  Since
$\left|\zL_2/\zL_1\right|=\deg(f)$, we have that $\left|A\right|=4
\deg(f)$.

Examples~\ref{ex:degtwo} and \ref{ex:degnine} give examples of finite
Abelian groups with nonseparating subsets.

\begin{ex}\label{ex:degtwo}  In this example we consider
$A=\bZ/4 \bZ\oplus \bZ/2 \bZ$.  We show that $H=\{(0,0),\pm
(1,0),(2,0),\pm (1,1)\}$ is a nonseparating subset of $A$.  Let $B$ be
a cyclic subgroup of $A$ such that $A/B$ is cyclic.  Then either
$\left|B\right|=4$ or $\left|B\right|=2$.  If $\left|B\right|=4$, then
either $B=\left<(1,0)\right>$ or $B=\left<(1,1)\right>$.  One verifies
in these cases that $c_1=c_2=c_3=0$ and $c_4=1$, and so $c_2=c_3$, as
desired.  If $\left|B\right|=2$, then either $B=\left<(0,1)\right>$ or
$B=\left<(2,1)\right>$.  One verifies in these cases that $c_1=0$,
$c_2=c_3=1$ and $c_4=2$.  Thus $H$ is a nonseparating subset of $A$.

\end{ex}

\begin{ex}\label{ex:degnine}  In this example we show that
the set $H$ of elements of order 3 is a nonseparating subset of
$A=\bZ/6 \bZ\oplus \bZ/6 \bZ$.  The 3-torsion subgroup of $A$ is
isomorphic to $\bZ/3 \bZ\oplus \bZ/3 \bZ$.  It has eight elements of
order 3, which are paired by inversion.  So the set $H$ contains four
pairs of elements which are mutually inverse.  To verify that $H$ is a
nonseparating subset for $A$, let $B$ be a cyclic subgroup of $A$ such
that $A/B$ is cyclic.  Then $B\cong A/B\cong \bZ/6 \bZ$.  Elements of
order 3 in $A$ map to elements of order either 1 or 3 in $A/B$.  Both
$B$ and $A/B$ contain exactly one pair of mutually inverse elements of
order 3.  It follows that $c_1=0$ and $c_2=c_3=c_4=2$.  Hence
$c_2=c_3$, and so $H$ is a nonseparating subset of $A$.

\end{ex}

The next lemma provides a simple but limited way to produce
nonseparating subsets from known ones.

\begin{lemma}\label{lemma:translate}  Let $A$ be a finite
Abelian group, and let $H=\{\pm h_1,\pm h_2,\pm h_3,\pm h_4\}$
be a nonseparating subset of $A$.  Let $h$ be an element of order 2
in $A$, and let $H'=H+h=\{\pm (h_1+h),\pm (h_2+h),\pm (h_2+h),\pm
(h_4+h)\}$.  Then $H'$ is a nonseparating subset of $A$.
\end{lemma}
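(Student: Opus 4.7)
The plan is to verify directly that $H' = H + h$ is a disjoint union of four inverse pairs and that its sorted coset numbers $c_1', c_2', c_3', c_4'$ relative to any eligible $B$ and generator $a+B$ of $A/B$ satisfy $c_2' = c_3'$. Since $2h = 0$, we have $-(h_i + h) = -h_i - h = -h_i + h$, so each pair $\{\pm(h_i + h)\}$ equals $\{h_i + h, -h_i + h\} = \{h_i, -h_i\} + h$; in particular the four pairs of $H'$ remain disjoint exactly because the four pairs of $H$ are, and $H'$ is inverse-closed.

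Now fix a cyclic subgroup $B \le A$ with $A/B$ cyclic of order $n$, together with a generator $a + B$ of $A/B$. Let $\bar h$ denote the image of $h$ in $A/B$. Since $h$ has order dividing $2$, so does $\bar h$. I will split into two cases. In \emph{Case A}, $\bar h = 0$, i.e., $h \in B$. Then $H + h \equiv H \pmod{B}$, so the coset numbers of $H'$ relative to $B$ and $a+B$ coincide with those of $H$, and in particular $c_2' = c_3'$. In \emph{Case B}, $\bar h$ has order $2$ in $A/B$. Then $n$ is even and $\bar h$ is the unique involution in the cyclic group $A/B$, so $\bar h = (n/2)\bar a$, equivalently $h \equiv (n/2)\,a \pmod B$.

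The key calculation is the behaviour of the coset-number transformation in Case~B. Recall the coset number $c$ of an element $\eta$ is the unique integer in $[0, n/2]$ with $\pm \eta \equiv c\,a \pmod{B}$. If $h_i$ has coset number $c \in [0, n/2]$, then $h_i \equiv \pm c\,a \pmod{B}$, whence
\[
h_i + h \equiv \pm c\,a + (n/2)\,a \pmod{B},
\]
so the coset number $c'$ of $h_i + h$ is the unique element of $[0, n/2]$ congruent to $\pm(c + n/2)$ modulo $n$. A short check (separately for $c = 0$, $0 < c < n/2$, and $c = n/2$) shows in every case that $c' = n/2 - c$; so the map on coset numbers is the involution $c \mapsto n/2 - c$ of $[0, n/2]$. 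In particular it is order-reversing, so if the sorted coset numbers of $H$ are $c_1 \le c_2 \le c_3 \le c_4$, then those of $H'$, after resorting, are
\[
c_1' = n/2 - c_4, \quad c_2' = n/2 - c_3, \quad c_3' = n/2 - c_2, \quad c_4' = n/2 - c_1.
\]
The hypothesis $c_2 = c_3$ for $H$ then immediately yields $c_2' = n/2 - c_3 = n/2 - c_2 = c_3'$, as required.

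Since $B$ and the generator $a + B$ were arbitrary (subject to $A/B$ cyclic), this proves $H'$ is nonseparating. The only subtle step is the computation $c \mapsto n/2 - c$ in Case~B; the rest is bookkeeping. I expect that verifying this formula uniformly across the boundary values $c = 0$ and $c = n/2$ will be the main place where care is needed.
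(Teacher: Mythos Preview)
Your proof is correct and follows essentially the same approach as the paper: both split on whether $h\in B$, observe that the coset numbers are unchanged when $h\in B$, and show that when $h\notin B$ the coset number of each inverse pair transforms by $c\mapsto n/2-c$ (the paper writes this as $c'_k=m-c_k$ with $n=2m$). Your write-up is in fact a bit more careful than the paper's, since you explicitly verify that $H'$ is again a disjoint union of four inverse pairs and you account for the re-sorting after applying the order-reversing map $c\mapsto n/2-c$.
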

  \begin{proof} Let $B$ be a cyclic subgroup of $A$ such that $A/B$ is
cyclic.  Let $c_1$, $c_2$, $c_3$, $c_4$ be the coset numbers for $H$
relative to $B$ and a generator of $A/B$.  Let $c'_1$, $c'_2$,
$c'_3$, $c'_4$ be the corresponding coset numbers for $H'$.  If $h\in
B$, then $c'_k=c_k$ for $k\in \{1,2,3,4\}$.  If $h\notin B$, then the
order of $A/B$ is even, say, $2m$.  In this case $c'_k=m-c_k$ for
$k\in \{1,2,3,4\}$.  It follows that if $c_2=c_3$, then $c'_2=c'_3$.
This proves Lemma~\ref{lemma:translate}.

\end{proof}

The next lemma shows that a nonseparating subset for a subgroup is a
nonseparating subset for the group.

\begin{lemma}\label{lemma:subgroup}  If $A$ is a finite
Abelian group and if $A'$ is a subgroup of $A$, then every subset of
$A'$ which is nonseparating for $A'$ is nonseparating for $A$.
\end{lemma}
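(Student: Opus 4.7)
The plan is to reduce the nonseparating property for $A$ to the assumed nonseparating property for $A'$. Given any cyclic subgroup $B$ of $A$ with $A/B$ cyclic and any generator $a+B$ of $A/B$, I will produce a cyclic subgroup $B'$ of $A'$ with $A'/B'$ cyclic and an explicit generator $a'+B'$ such that the coset numbers for $H$ in $A$ with respect to $(B,a)$ are a uniform positive integer multiple of those for $H$ in $A'$ with respect to $(B',a')$. Since multiplication by a positive integer preserves ordering, this reduces $c_2=c_3$ in $A$ to $c'_2=c'_3$ in $A'$, which is the hypothesis.

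The construction is the natural one: set $B':=B\cap A'$. As a subgroup of the cyclic group $B$, $B'$ is cyclic. The composite $A'\hookrightarrow A\twoheadrightarrow A/B$ has kernel $B'$ and image $(A'+B)/B$, so $A'/B'\cong (A'+B)/B$ is a subgroup of the cyclic group $A/B$, hence cyclic. Write $n=|A/B|$ and $n'=|A'/B'|$ and set $m=n/n'$. Then $(A'+B)/B$ is the unique subgroup of $A/B$ of order $n'$, generated by $m(a+B)$. Pick any $a'\in A'$ with $a'+B=m(a+B)$; then $a'+B'$ maps to a generator of $(A'+B)/B$ under the isomorphism $A'/B'\cong (A'+B)/B$, so $a'+B'$ generates $A'/B'$.

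For the coset number identity, let $d_k\in[0,n)$ and $e_k\in[0,n')$ be the unique integers with $h_k\in d_k a+B$ and $h_k\in e_k a'+B'$ (these exist because $a+B$ and $a'+B'$ are generators, and $h_k\in A'$ for the latter). Projecting the second relation to $A/B$ and using $a'+B=m(a+B)$ gives $d_k\equiv me_k\pmod n$, and $me_k<mn'=n$ forces $d_k=me_k$. Consequently $c_k=\min(d_k,n-d_k)=\min(me_k,m(n'-e_k))=m\,c'_k$, so the sorted coset numbers in $A$ are exactly $m$ times the sorted coset numbers in $A'$. The hypothesis $c'_{(2)}=c'_{(3)}$ then gives $c_{(2)}=c_{(3)}$, as required. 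The only place any care is needed is the choice of $a'$: one must ensure $a'+B'$ is a genuine generator of $A'/B'$, which is precisely what makes the clean scaling $d_k=me_k$ hold and turns the sorted-list comparison into a one-line calculation.
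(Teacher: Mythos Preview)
Your proof is correct and follows essentially the same approach as the paper's: set $B'=B\cap A'$, observe that $A'/B'$ embeds in the cyclic group $A/B$, and note that the ordering of cosets of $B$ in $A$ induced by the generator $a+B$ restricts to an ordering of cosets of $B'$ in $A'$ coming from a generator of $A'/B'$. The paper leaves this last step at the level of ``determines an ordering,'' whereas you make it fully explicit by exhibiting the generator $a'+B'$ and the scaling factor $m=n/n'$, yielding the clean identity $c_k=m\,c'_k$; this is a welcome elaboration but not a different idea.
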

  \begin{proof} Let $A$ be a finite Abelian group, let $A'$ be a
subgroup of $A$ and suppose that $H$ is a subset of $A'$ which is
nonseparating for $A'$.  Let $B$ be a cyclic subgroup of $A$ such that
$A/B$ is cyclic.  Then both $A'\cap B$ and $A'/A'\cap B$ are cyclic.
Choosing a generator for $A/B$ determines an ordering of the cosets of
$B$ in $A$.  This determines an ordering of the cosets of $A'\cap B$ in
$A'$, and this ordering of the cosets of $A'\cap B$ is determined by a
generator of $A'/A'\cap B$.  It follows from this and the definition
that $H$ is nonseparating for $A$.  This proves
Lemma~\ref{lemma:subgroup}.
\end{proof}

\begin{ex}\label{ex:double} In this example we construct a rational
function with degree 4 which is a NET map whose Teichm\"{u}ller map is
constant.  We follow the strategy outlined immediately after
Theorem~\ref{thm:algcformn}.

We first construct an Abelian group $A$ with order $4\cdot 4=16$ which
has a nonseparating subset.  We take $A=\bZ/4 \bZ\oplus \bZ/4 \bZ$.
The group $A$ contains the subgroup $A'=\left<(1,0),(0,2)\right>$,
which is isomorphic to $\bZ/4 \bZ\oplus \bZ/2 \bZ$.
Example~\ref{ex:degtwo} implies that $H=\{(0,0),\pm (1,0),(2,0),\pm
(1,2)\}$ is a nonseparating subset of $A'$.  Hence
Lemma~\ref{lemma:subgroup} implies that $H$ is a nonseparating subset
of $A$.

Now we identify $\bR^2$ with $\bC$, and we let $\zL_2$ be any lattice
in $\bC$.  We let $\zL_1=2 \zL_2$, so that $\zL_2/2 \zL_1\cong A$.
For an isomorphism from $\zL_2$ to $\zL_1$, we choose the map
$z\mapsto 2z$.  The lattices $\zL_1$, $\zL_2$ and the map $z\mapsto
2z$ determine a Latt\`{e}s map $g\co \widehat{\bC}\to \widehat{\bC}$
from the Riemann sphere to itself up to analytic conjugation.

In this paragraph we identify $g$.  Let $\wp $ be the Weierstrass
function with group of periods $2 \zL_1$.  Figure~\ref{fig:double}
shows a fundamental domain for the action of $\zG_1$ on $\bC$.  The
dots are elements of $\zL_2$.  The lower left corner is 0.  Points are
labeled by their images in $\widehat{\bC}$ under $\wp $.  For our
usual branched covers $p_1\circ q_1\co \bC\to S^2$ and $p_2\circ
q_2\co \bC\to S^2$, we take $p_1(q_1(z))=\wp (z)$ and $p_2(q_2(z))=\wp
(2z)$.  It has long been known that $\wp (2z)$ is a rational function
of $\wp (z)$.  The map $g$ is this rational function.  Thus
$g(e_1)=g(e_2)=g(e_3)=g(\infty)=\infty$ and $g(E_k)=g(E'_k)=e_k$ for
$k\in \{1,2,3\}$.  The critical points of $g$ are $E_1$, $E'_1$,
$E_2$, $E'_2$, $E_3$, $E'_3$.  The postcritical set of $g$ is
$\{e_1,e_2,e_3,\infty\}$.  According to line 3.41 of \cite{W},
  \begin{equation*}
g(z)=\frac{(z^2-s_2)^2+8s_3z}{4(z^3+s_2z-s_3)},
  \end{equation*}
where $s_2=e_1e_2+e_1e_3+e_2e_3$ and $s_3=e_1e_2e_3$.  The only
restriction on $e_1$, $e_2$ and $e_3$ is that they are distinct and
that $e_1+e_2+e_3=0$.

\begin{figure}\begin{center}
\includegraphics{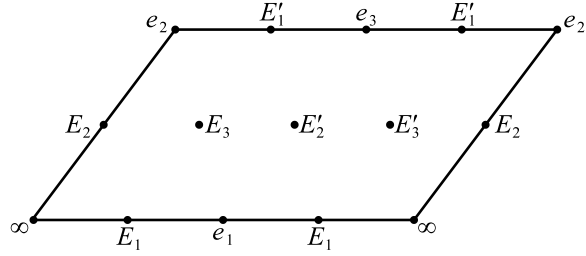} \caption{ A fundamental domain
for $\zG_1$.}
\label{fig:double}
\end{center}\end{figure}

We identify $A$ with a subgroup of $\bC/2 \zL_1$ so that the images in
$\widehat{\bC}$ of the elements $(0,0)$, $(1,0)$, $(2,0)$, $(1,2)$ of
$S$ are $\infty$, $E_1$, $e_1$, $E'_1$ in order.  We let $h\co
\widehat{\bC}\to \widehat{\bC}$ be an orientation-preserving
homeomorphism such that $h(e_1)=E'_1$, $h(e_2)=E_1$, $h(e_3)=e_1$ and
$h(\infty)=\infty$, so that $h$ takes the postcritical points of $g$ to
the points of $\widehat{\bC}$ corresponding to $H$.

Let $f=h\circ g$.  Then $f$ is a Thurston map.  Its critical points
are $E_1$, $E'_1$, $E_2$, $E'_2$, $E_3$, $E'_3$.  These points are
taken by $f$ to $E'_ 1$, $E_1$, $e_1$.  Moreover, $f(e_1)=\infty$.
Now we see that $f$ has exactly four postcritical points, and so it is
a NET map whose Teichm\"{u}ller map is constant.

To construct a specific rational map equivalent to $g$, we choose the
simple case in which $e_1=1$, $e_2=-1$ and $e_3=0$.  Then $s_2=-1$ and
$s_3=0$.  Hence
  \begin{equation*}
g(z)=\frac{(z^2+1)^2}{4(z^3-z)}.
  \end{equation*}
To find $E_1$ and $E'_1$, we solve the equation $g(z)=e_1=1$.  This
leads to the equation $(z^2+1)^2=4(z^3-z)$, then
$z^4-4z^3+2z^2+4z+1=0$ and finally $(z^2-2z-1)^2=0$.  We take
$E_1=1+\sqrt{2}$ and $E'_1=1-\sqrt{2}$.  Because $g$ is an odd
function, we may take $E_2=-1-\sqrt{2}$ and $E'_2=-1+\sqrt{2}$.  Set
$h(z)=-\sqrt{2}z+1$.  One verifies that $h(e_1)=E'_1$, $h(e_2)=E_1$,
$h(e_3)=e_1$ and $h(\infty)=\infty$.  Thus $f=h\circ g$ is a rational
function which is a NET map whose Teichm\"{u}ller map is constant.

Rather than explicitly calculating $f$, we explicitly calculate an
analytic conjugate of $f$ which is simpler.  For this, we observe that
there exists a unique orientation-preserving isometry $k$ of the
hyperbolic plane $\mathbb{H}$ which rotates about $i$ through the
angle $5 \zp/4$.  See Figure~\ref{fig:rotation}, where eight
hyperbolic sectors with angle $\zp/4$ are drawn at $i$.  The map $k$
has order 8 and stabilizes the set $\{0,\pm 1,\pm 1\pm
\sqrt{2},\infty\}$.  It acts on these points as follows.
  \begin{equation*}
0\mapsto -1-\sqrt{2}\mapsto 1\mapsto 1-\sqrt{2}\mapsto \infty
\mapsto -1+\sqrt{2}\mapsto -1\mapsto 1+\sqrt{2}\mapsto 0
  \end{equation*}

\begin{figure}\begin{center}
\includegraphics{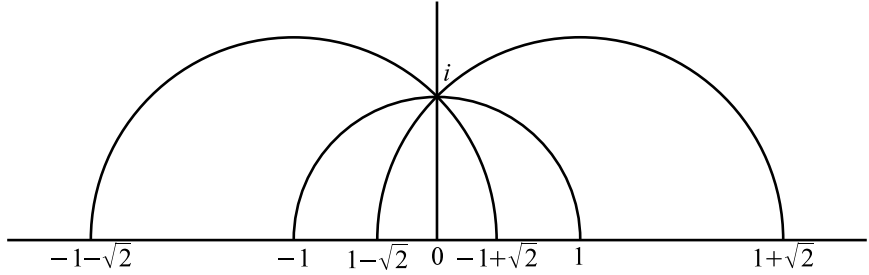} \caption{ Understanding the
map $k$.}
\label{fig:rotation}
\end{center}\end{figure}

Set $F= k\circ f\circ k^{-1}$.  One easily verifies that $F$ is a
rational function with degree 4 such that 0, $\pm 1$, $\infty$ are all
critical points of $F$ with $F(0)=F(\infty)=\infty$ and
$F(1)=F(-1)=0$.  Thus $F(z)=K(z-z^{-1})^2$ for some complex number
$K$.  To determine $K$, we note that $F(\sqrt{2}-1)=\sqrt{2}-1$.
Hence $K(\sqrt{2}-1-(\sqrt{2}+1))^2=\sqrt{2}-1$, and so
$K=\frac{\sqrt{2}-1}{4}$.  We conclude that
$F(z)=\frac{\sqrt{2}-1}{4}(z-z^{-1})^2$ is a NET map whose
Teichm\"{u}ller map is constant.

In this paragraph we find a functional equation which proves that
$\zS_F(z)=\zS_f(z)=i$ for every $z\in \bH$. This not only gives
another proof that the Teichm\"{u}ller map of $F$ is constant but also
determines the value of the constant.  We choose a basis
$(\zl_2,\zm_2)$ of $\zL_2$ in the straightforward way relative to the
fundamental domain in Figure~\ref{fig:double}, so that $\wp
(\zl_2)=E_1$ and $\wp (\zm_2)=E_2$.  The six $2 \zL_1$-cosets of
$q_1^{-1}(p_1^{-1}(P_2))=\wp ^{-1}(P_2)$ in $\zL_2$ are represented by
0, $\zl_2$, $2 \zl_2$, $3 \zl_2$, $\zl_2+2 \zm_2$ and $3 \zl_2+2
\zm_2$.  Let $\zd\co \bC\to \bC$ be the map defined by $\zd(z)=z+2
\zl_2$.  Then $\zd\in \text{Aff}(f)$.  Since $\zd\in \zG_2$, the map
$\zd_2$ from Section~\ref{sec:fnleqns} is the identity map, and so
$\zS_{\zd_2}$ is the identity map.  Similarly, $\zd_1^2$ and
$\zS_{\zd_1}^2$ are identity maps.  To better understand $\zd_1$, we
note that the closed interval $[e_2,e_3]=[-1,0]$ is a core arc for an
essential simple closed curve in $\widehat{\bC}\setminus \{0,\pm
1,\infty\}$ with slope 0.  Hence
$h([e_2,e_3])=[e_1,E_1]=[1,1+\sqrt{2}]$ is a core arc for an essential
simple closed curve in $\widehat{\bC}\setminus h(\{0,\pm 1,\infty\})$
with slope 0.  Similarly, $[\infty,e_2]=[-\infty,-1]$ is a core arc
for an essential simple closed curve in $\widehat{\bC}\setminus
\{0,\pm 1,\infty\}$ with slope $\infty$, and
$h([\infty,e_2])=[E_1,\infty]=[1+\sqrt{2},\infty]$ is a core arc for
an essential simple closed curve in $\widehat{\bC}\setminus h(\{0,\pm
1,\infty\})$ with slope $\infty$.  From this we see that the map which
$\zd_1$ induces on slopes relative to $\widehat{\bC}\setminus
h(\{0,\pm 1,\infty\})$ interchanges 0 and $\infty$.  So $\zS_{\zd_1}$
is an involution which interchanges 0 and $\infty$.
Theorem~\ref{thm:fnleqn} implies that $\zS_{\zd_1}$ comes from
$\text{PSL}(2,\mathbb{Z})$.  Hence $\zS_{\zd_1}(z)=-\frac{1}{z}$.
Because $\zS_f\circ \zS_{\zd_2}=\zS_{\zd_1}\circ \zS_f$, we obtain
that $\zS_f(z)=-\frac{1}{\zS_f(z)}$ for every $z\in \bH$.  Hence
$\zS_f(z)=i$ for every $z\in \bH$.  Similarly, $\zS_F(z)=i$ for every
$z\in \bH$.

This example illustrates the discussion at the beginning of
Section~\ref{sec:examples} concerning subdivision maps of finite
subdivision rules.  What follows is a brief description of this.  Let
$G=k\circ g\circ k^{-1}$ and $H=k\circ h\circ k^{-1}$ (not to be
confused with the nonseparating subset of $A$).  Then $G$ is a
Euclidean Thurston map, $H$ is an orientation-preserving homeomorphism
and $F=H\circ G$.  The critical points of $F$ and $G$ are 0, $\pm 1$,
$\pm i$ and $\infty$.  The postcritical set of $G$ is the image under
$k$ of the postcritical set of $g$, namely,
$k(\{e_1,e_2,e_3,\infty\})=\{\pm 1\pm \sqrt{2}\}$.  Including the
action of $k$, the image in $\widehat{\bC}$ of the boundary of the
fundamental domain in Figure~\ref{fig:double} is
$(-\infty,-1-\sqrt{2})\cup (1-\sqrt{2},\infty)\cup \{\infty\}$.  The
map $G$ maps this set into itself.  Thus this set together with
vertices at $\{\pm 1\pm \sqrt{2}\}$ make $\widehat{\bC}$ into a
2-complex for which $G$ is the subdivision map of a finite subdivision
rule.  The 1-skeleton of the first subdivision of $\widehat{\bC}$ is
the union of the real line, the unit circle and $\{\infty\}$.
Figure~\ref{fig:doublex} shows the 1-skeleton of the original complex
drawn with solid line segments and the remaining 1-skeleton of the
subdivision drawn with dashes.  Combinatorially, the tiles of this
finite subdivision rule are essentially squares which are subdivided
into four squares in the straightforward way.  The map $H$ maps
$(\infty,-1-\sqrt{2})\cup (1-\sqrt{2},\infty)\cup \{\infty\}$ to
$(1-\sqrt{2},\infty)\cup \{\infty\}$.  Furthermore
$H(1-\sqrt{2})=\infty$, $H(1+\sqrt{2})=0$, $H(-1-\sqrt{2})=1-\sqrt{2}$
and $H(-1+\sqrt{2})=-1+\sqrt{2}$.  Thus the set
$(1-\sqrt{2},\infty)\cup \{\infty\}$ together with vertices at
$\{0,\pm (1-\sqrt{2}),\infty\}$ make $\widehat{\bC}$ into a 2-complex
for which $F$ is the subdivision map of a finite subdivision rule.
The 1-skeleton of its first subdivision is the same as that for $G$.
Although the first subdivisions of these finite subdivision rules are
identical, these finite subdivision rules are quite different.  The
first has bounded valence, while the second does not.

\begin{figure}\begin{center}
\includegraphics{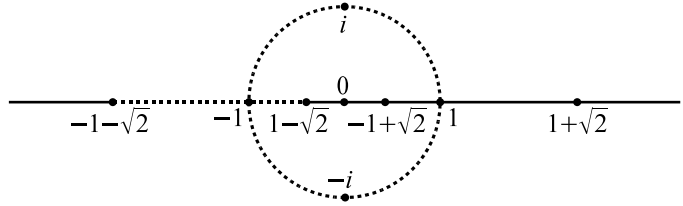} \caption{ The first subdivision
of the subdivision complexes for $F$ and $G$.}
\label{fig:doublex}
\end{center}\end{figure}

This concludes Example~\ref{ex:double}.

\end{ex}

We next prove a general existence theorem.

\begin{thm}\label{thm:twonine} If $d$ is an integer with $d>2$ such
that $d$ is divisible by either 2 or 9, then there exists a NET map
with degree $d$ whose Teichm\"{u}ller map is constant.
\end{thm}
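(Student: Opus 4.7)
The plan is to apply Theorem~\ref{thm:algcformn} together with the construction recipe following it: it suffices, for each admissible $d$, to produce a finite Abelian group $A$ of order $4d$ that is generated by two elements, satisfies $A/2A\cong \bZ/2\bZ \oplus \bZ/2\bZ$, and contains a nonseparating subset. Examples~\ref{ex:degtwo} and \ref{ex:degnine} exhibit such subsets in $\bZ/4\bZ\oplus \bZ/2\bZ$ and in $\bZ/6\bZ\oplus \bZ/6\bZ$, and Lemma~\ref{lemma:subgroup} transports them to any Abelian overgroup. So the task reduces to writing down a suitable $A$ containing one of these as a subgroup.

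When $d=2m$ with $m\ge 2$, I would take $A=\bZ/(4m)\bZ \oplus \bZ/2\bZ$. This group is visibly generated by two elements, has order $4d$, and its two direct summands each have even order, so $A/2A\cong \bZ/2\bZ \oplus \bZ/2\bZ$. The element $(m,0)$ has order $4m/\gcd(4m,m)=4$ in $\bZ/(4m)\bZ$, so the subgroup $A'=\langle (m,0),(0,1)\rangle$ is isomorphic to $\bZ/4\bZ \oplus \bZ/2\bZ$. The nonseparating subset from Example~\ref{ex:degtwo} sits inside $A'$, and by Lemma~\ref{lemma:subgroup} remains nonseparating inside $A$.

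When $9\mid d$, write $d=9k$ and take $A=\bZ/(6k)\bZ \oplus \bZ/6\bZ$. Then $|A|=36k=4d$, $A$ is generated by two elements, and $A/2A\cong \bZ/2\bZ \oplus \bZ/2\bZ$ because both summands have even order. The subgroup $A'=\langle (k,0),(0,1)\rangle$ is isomorphic to $\bZ/6\bZ \oplus \bZ/6\bZ$ and contains the set of order-$3$ elements, which Example~\ref{ex:degnine} shows is nonseparating in $A'$; Lemma~\ref{lemma:subgroup} then makes it nonseparating in $A$.

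In each case, the strategy following Theorem~\ref{thm:algcformn} produces lattices $\zL_1\subseteq \zL_2$ with $\zL_2/2\zL_1\cong A$, a Euclidean Thurston map $g$ of degree $[\zL_2:\zL_1]=d$, and an orientation-preserving homeomorphism $h$ sending $P_g$ onto $p_1(H)\subseteq g^{-1}(P_g)$. Because $g$ is Euclidean, every element of $P_g$ is a critical value of $g$, and hence every element of $h(P_g)$ is a critical value of $f=h\circ g$; by the argument at the start of Section~\ref{sec:twists} this forces $f$ to have exactly four postcritical points, so $f$ is a NET map of degree $d$, and Theorem~\ref{thm:algcformn} then certifies that its Teichm\"{u}ller map is constant. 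The main obstacle, if any, is only the bookkeeping of subgroup isomorphisms and the $A/2A$ computation; all the substantive content is already packaged in Examples~\ref{ex:degtwo}, \ref{ex:degnine} and Lemma~\ref{lemma:subgroup}.
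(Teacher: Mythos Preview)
Your overall strategy matches the paper's exactly: the same groups $A=\bZ/2d\bZ\oplus\bZ/2\bZ$ (for $2\mid d$) and $A=\bZ/(2d/3)\bZ\oplus\bZ/6\bZ$ (for $9\mid d$), the same appeal to Examples~\ref{ex:degtwo} and \ref{ex:degnine}, and the same use of Lemma~\ref{lemma:subgroup}. The only substantive difference is in how you verify that $f=h\circ g$ has four postcritical points, and here there is a genuine gap.

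Your assertion ``Because $g$ is Euclidean, every element of $P_g$ is a critical value of $g$'' is false as a general statement: a degree-$2$ Euclidean Thurston map has only two critical points and hence at most two critical values, yet four postcritical points. More to the point, Section~\ref{sec:twists} explicitly warns that the conclusion ``$f$ has at least four postcritical points'' \emph{may fail if either $\deg(g)=2$ or $\deg(g)=4$}. The paper therefore treats $d=4$ separately, citing the explicit rational map of Example~\ref{ex:double}, and invokes the Section~\ref{sec:twists} argument only for $d>4$. As it happens, your choice $A=\bZ/8\bZ\oplus\bZ/2\bZ$ at $d=4$ forces $\zL_2/\zL_1\cong\bZ/4\bZ$ (cyclic), and one can check directly that in that case every element of $P_g$ really is a critical value---but this requires an argument you did not give, not merely the hypothesis that $g$ is Euclidean. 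Either supply that check, or follow the paper and dispose of $d=4$ via Example~\ref{ex:double} before assuming $d>4$.
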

  \begin{proof} Let $d$ be an integer such that $d>2$ and $d$ is
divisible by either 2 or 9.  We prove that there exists a NET map with
degree $d$ and constant Teichm\"{u}ller map.  Example~\ref{ex:double}
provides a degree 4 NET map with a constant Teichm\"{u}ller map.
Hence we may assume that $d>4$.

First suppose that $d>4$ and $2| d$.  Let $A=\bZ/2d \bZ\oplus \bZ/2
\bZ$.  Then $A$ contains a subgroup $A'$ isomorphic to $\bZ/4
\bZ\oplus \bZ/2 \bZ$.  Example~\ref{ex:degtwo} shows that $A'$ has a
nonseparating subset.  Hence Lemma~\ref{lemma:subgroup} implies that
$A$ has a nonseparating subset.  As in the discussion following
Theorem~\ref{thm:algcformn}, we construct a degree $d$ Thurston map
$f$ which is nearly Euclidean with a constant Teichm\"{u}ller map if
its postcritical set contains at least four points.  But since $d>4$,
the discussion in Section~\ref{sec:twists} shows that $f$ does have at
least four postcritical points.  This proves Theorem~\ref{thm:twonine}
if $2| d$.  If $9| d$, then we argue in the same way using
Example~\ref{ex:degnine} with $A=\bZ/2d' \bZ\oplus \bZ/6 \bZ$, where
$d'=d/3$.

This proves Theorem~\ref{thm:twonine}.

\end{proof}

\begin{remark}\label{remark:reducible}  In this remark we
present another view of Theorem~\ref{thm:twonine}.  We begin with some
general observations.

Instead of having two lattices as usual, suppose that we have three
lattices $\zL_1\subseteq \zL_2\subseteq \zL_3$.  We also have
corresponding maps $p_i$ and $q_i$, and $S^2$ is identified with
$p_i\circ q_i(\mathbb{R}^2)$ so that the four-element sets $p_i\circ
q_i(\zL_i)$ are equal.  Suppose also that $h\co S^2\to S^2$ is an
orientation-preserving homeomorphism which maps this four-element
subset of $S^2$ to $p_1\circ q_1(\zL_2)$.

Let $f\co S^2\to S^2$ be the map induced by $p_1\circ q_1$ and
$p_2\circ q_2$, so that $f\circ p_1\circ q_1=p_2\circ q_2$.
Similarly, let $g\co S^2\to S^2$ be the map for which $g\circ p_2\circ
q_2=p_3\circ q_3$.  Then $f$ and $g$ are Euclidean Thurston maps.  The
map $F=h\circ f$ is a Thurston map, and it is nearly Euclidean if it
has at least four postcritical points.  The map $G=h\circ g\circ
h^{-1}$ is a Euclidean Thurston map, being topologically conjugate to
the Euclidean Thurston map $g$.  The map $E=h\circ g\circ f=G\circ F$
is a NET map if it has at least four postcritical points.  The
postcritical set of $F$ is contained in $h(P)$.  The postcritical set
of $G$ is exactly $h(P)$.  It follows that if $\zS_E$, $\zS_F$ and
$\zS_G$ are the Teichm\"{u}ller maps of $E$, $F$ and $G$, then
$\zS_E=\zS_F\circ \zS_G$.  This shows that if $\zS_F$ is constant,
then $\zS_E$ is constant.  Moreover, since $G$ is a Euclidean Thurston
map, $\zS_G$ is a linear fractional transformation.  Thus $\zS_E$ and
$\zS_F$ differ only by a linear fractional transformation.

Now we return to Theorem~\ref{thm:twonine}.  As in the discussion
following Theorem~\ref{thm:algcformn}, we construct lattices $\zL_1$
and $\zL_2$ so that $\zL_2/\zL_1\cong \mathbb{Z}/2\mathbb{Z}$.
Choosing an identification map from $\mathbb{R}^2/\zG_2$ to
$\mathbb{R}^2/\zG_1$ obtains a Euclidean Thurston map $f$.  We choose
$h$ so that $h(P_f)=p_1(H)$, where $H$ is the nonseparating subset of
$\zL_2/2\zL_1\cong \mathbb{Z}/4\mathbb{Z}\oplus
\mathbb{Z}/2\mathbb{Z}$ as in Example~\ref{ex:degtwo}.  The map
$F=h\circ f$ is a Thurston map and $\zS_F$ is constant, but $F$ is not
nearly Euclidean because it has only three postcritical points
(consistent with Theorem~\ref{thm:nottwo}).  Let $\zL_3$ be any
lattice containing $\zL_2$.  Choosing an identification map from
$\mathbb{R}^2/\zG_3$ to $\mathbb{R}^2/\zG_2$ gives rise to a Euclidean
Thurston map $g$.  Let $G=h\circ g\circ h^{-1}$, and let $E=h\circ
g\circ f=G\circ F$.  Then $E$ is a NET map if it has at least four
postcritical points.  The discussion earlier in this remark shows that
$\zS_E$ is constant.  Because the index of $\zL_2$ in $\zL_3$ is
arbitrary, this gives another proof of Theorem~\ref{thm:twonine} if
$2|d$.  This way of showing that $\zS_E$ is constant by expressing $E$
as a composition $G\circ F$ is in the spirit of Proposition 5.1 of
\cite{BEKP}.  A major difference between the treatment here and that
in Proposition 5.1 of \cite{BEKP} is that here $\zS_E$ factors with
one of the factors a constant function, while there $\zS_E$ factors
through a trivial Teichm\"{u}ller space.  Note, in particular, that
the rational functions in Example~\ref{ex:double} with constant
pullback maps on Teichm\"{u}ller space factor as in the previous
paragraph.

When 2 is replaced by 9, the earlier observations in this remark again
show that once we obtain an example with degree 9, then we obtain an
example for every degree divisible by 9.

\end{remark}

For our first nonexistence result, we prove that there does not exist
a NET map with degree 2 whose Teichm\"{u}ller map is constant.

\begin{thm}\label{thm:nottwo} There does not exist a NET map with
degree 2 whose Teichm\"{u}ller map is constant.
\end{thm}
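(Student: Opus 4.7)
I would proceed by contradiction using Theorem~\ref{thm:algcformn}: assume $f$ is a NET map of degree $2$ with constant Teichm\"uller map, so that $H := p_1^{-1}(P_2)$ is a nonseparating subset of $A := \zL_2/2\zL_1$. Since $[\zL_2 : \zL_1] = 2$ and $[\zL_1 : 2\zL_1] = 4$, one has $|A| = 8$. The subgroup $V := \zL_1/2\zL_1 \cong (\bZ/2)^2$ is the $2$-torsion $A[2]$, which together with the assumption that $A$ is generated by two elements forces $A \cong \bZ/4 \oplus \bZ/2$, and $2A = 2\zL_2/2\zL_1$ is then an order-$2$ subgroup of $V$. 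Using the end of Section~\ref{sec:defns}, write $V' := H \cap V$ and $m := |V'|$; then $H \setminus V$ consists of $n = 4-m$ inverse pairs drawn from the four elements of $A \setminus V$, so $n \le 2$ and $(m,n) \in \{(4,0),(3,1),(2,2)\}$.

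I would dispose of the cases $(4,0)$ and $(3,1)$ directly by exhibiting a cyclic $B \le A$ with cyclic quotient for which the coset numbers satisfy $c_2 \ne c_3$. In case $(4,0)$, $H = V$; for any order-$4$ element $\zeta \in A$ the subgroup $B = \langle \zeta \rangle$ satisfies $V \cap B = B[2] = 2A$, so exactly two of the four self-inverse singletons of $H$ lie in $B$ and two lie outside, giving coset numbers $0,0,1,1$. In case $(3,1)$ with inverse pair $\{\pm\eta\}$ in $H$, the same analysis applied to $B = \langle \eta\rangle$ works when $|V'\cap 2A|=1$; when $V'\supseteq 2A$ one instead takes $B' = \langle \eta'\rangle$ for a representative $\eta'$ of the \emph{other} inverse pair of $A\setminus V$, since $B'[2]=2A\subseteq V'$ and $\{\pm\eta\}\cap B'=\emptyset$, again producing coset numbers $0,0,1,1$.

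The remaining case $(2,2)$ requires more care. A direct computation against each of the four cyclic subgroups of $A$ with cyclic quotient shows that $H$ is nonseparating if and only if $V'$ is one of the two cosets of $2A$ in $V$. To finish I identify these two cosets with the two fibers of $g|_{P_g}$, where $f = h\circ g$ is the factorization produced by Theorem~\ref{thm:cmpon}: the lift $\tilde g \co T_1\to T_2$ is the canonical quotient $\bR^2/2\zL_1 \to \bR^2/2\zL_2$, its restriction $\tilde g|_V \co V \to \zL_2/2\zL_2$ has kernel $2\zL_2/2\zL_1 = 2A$, and under the bijections $V\leftrightarrow P_1=P_g$ and $\zL_2/2\zL_2\leftrightarrow P_g$ this restriction descends to the $2$-to-$1$ map $g|_{P_g}$. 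Thus nonseparating $H$ in case $(2,2)$ corresponds precisely to $V'$ being a single fiber of $g|_{P_g}$.

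It remains to rule out this single-fiber situation dynamically. Assuming $V'=P_1\cap P_2$ is a fiber of $g|_{P_g}$ with common image $z\in P_g$, the map $f|_{V'} = h\circ g|_{V'}$ is constant with value $h(z)$, so
\begin{equation*}
f(P_2) \;=\; f(V') \cup f(\{c_1,c_2\}) \;=\; \{h(z)\}\cup\{h(y_1),h(y_2)\},
\end{equation*}
where $c_1,c_2$ are the critical points (which in case $(2,2)$ both lie in $P_2$) and $y_j=g(c_j)\in P_g$. Injectivity of $h$ gives $|f(P_2)|\le 3$, and since $\{c_1,c_2\}\subseteq P_2$ and $P_2$ is forward invariant under $f$, iteration yields $P_f \subseteq f(P_2)$ and hence $|P_f|\le 3 < 4$, contradicting the definition of a NET map. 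The main obstacle in the plan is the third-paragraph identification of the cosets of $2A$ in $V$ with the fibers of $g|_{P_g}$; once this algebro-geometric translation is in place, the dynamical count in the fourth paragraph is immediate.
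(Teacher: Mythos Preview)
Your proof is correct and arrives at the same contradiction as the paper, though the organization and emphasis differ. The paper first argues directly that any nonseparating $H\subseteq A\cong\bZ/4\bZ\oplus\bZ/2\bZ$ must contain both order-$4$ inverse pairs (your case $(2,2)$), then observes that since the critical points are therefore postcritical, $f|_{P_2}$ must be surjective, forcing $H$ to meet every coset of $\ker(\widetilde f)=2A$; this gives $|V'\cap 2A|=1$, and one more check with $B=\langle(1,0)\rangle$ produces coset numbers $0,0,1,1$. You instead run a clean trichotomy on $(m,n)$, dispose of $(4,0)$ and $(3,1)$ by exhibiting bad $B$'s, and in case $(2,2)$ prove the sharper equivalence ``nonseparating $\Leftrightarrow$ $V'$ is a $2A$-coset'' before translating $2A$-cosets into fibers of $g|_{P_g}$ via the Euclidean factorization $f=h\circ g$ and counting $|f(P_2)|\le 3$.

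The two arguments are logically dual: the paper assumes $|P_f|=4$ (equivalently $f|_{P_2}$ surjective) and deduces $V'$ meets both $2A$-cosets, hence $H$ is separating; you assume $H$ nonseparating, deduce $V'$ is a single $2A$-coset, and then show $|P_f|\le 3$. Your route is slightly longer but buys a transparent geometric picture---the identification of $2A$-cosets in $V$ with fibers of $g|_{P_g}$ is a nice structural observation that the paper leaves implicit. The paper's route is shorter because it never needs the full ``if and only if'' in case $(2,2)$ and never introduces $g$ explicitly.
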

  \begin{proof} Let $f\co S^2\to S^2$ be a Euclidean Thurston map with
degree 2 whose Teichm\"{u}ller map is constant.  We seek a
contradiction.

As in Section~\ref{sec:defns}, there exist lattices $\zL_1\subseteq
\zL_2\subseteq \bR^2$ such that the canonical group homomorphism
$\widetilde{f}\co \bR^2/2 \zL_1\to \bR^2/2 \zL_2$ lifts $f$.
Theorem~\ref{thm:algcformn} implies that $p_1^{-1}(P_2)$ is a
nonseparating subset of $\zL_2/2 \zL_1$.  Since $\deg(f)=2$, it
follows that $\zL_2/2 \zL_1\cong A=\bZ/4 \bZ\oplus \bZ/2 \bZ$.  We fix
such an isomorphism, and let $H$ be the subset of $A$ corresponding to
$p_1^{-1}(P_2)$.  Group inversion generates an equivalence relation on
$A$ whose equivalence classes have either one or two elements.  Here
they are.
  \begin{equation*}
\{(0,0)\}\quad\{\pm (1,0)\}\quad\{(2,0)\}\quad\{(0,1)\}\quad\{\pm
(1,1)\}\quad\{(2,1)\}
  \end{equation*}
The set $H$ contains exactly four of these six subsets.

In this paragraph we prove by contradiction that $H$ contains the
elements of order 4 in $A$.  Suppose that $(1,0)\notin H$.  Let
$B=\left<(1,0)\right>=\{(0,0),\pm (1,0),(2,0)\}$, a cyclic subgroup of
$A$ such that $A/B$ is cyclic.  Let $c_1$, $c_2$, $c_3$, $c_4$ be the
coset numbers for $H$ relative to $B$ and the generator of $A/B$.
Since $H$ contains four of the six subsets in the last display, it
contains either $(0,0)$ or $(2,0)$.  If $H$ contains both of these
elements, then $c_1=c_2=0$ and $c_3=c_4=1$, contrary to the assumption
that $H$ is nonseparating.  So $H$ contains exactly one of $(0,0)$ and
$(2,0)$.  Hence it contains $(1,1)$.  Now applying the same argument
with $B'=\left<(1,1)\right>$ obtains a contradiction.  This
contradiction implies that $(1,0)\in H$.  By symmetry, $(1,1)\in H$.
Thus $H$ contains the elements of order 4 in $A$.

The elements of order 4 in $\zL_2/2 \zL_1$ are precisely the elements
of $\bR^2/2 \zL_1$ whose images under $p_1$ are critical points of
$f$.  Since $H$ contains the elements of order 4 in $A$, the critical
points of $f$ are postcritical.  So in order for the postcritical set
$P_2$ of $f$ to have four points, the restriction of $f$ to $P_2$ must
be surjective.  This implies that $p_1^{-1}(P_2)$ contains a
representative from every coset of $\ker(\widetilde{f})=2 \zL_2/2
\zL_1$ in $\zL_2/2 \zL_1$.  It follows that $H$ contains exactly one
of the elements in $2A=\{(0,0),(2,0)\}$.  Now we obtain a contradiction
as in the previous paragraph using $B=\left<(1,0)\right>$.

This proves Theorem~\ref{thm:nottwo}.

\end{proof}

Now we prove a general nonexistence theorem.

\begin{thm}\label{thm:notcyclic}  Let $A$ be a finite
Abelian group such that $A/2A\cong \bZ/2 \bZ\oplus \bZ/2 \bZ$ and $2A$
is a cyclic group with odd order.  Then $A$ does not contain a
nonseparating subset.
\end{thm}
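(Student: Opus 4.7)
The plan: since $A/2A \cong \bZ/2\bZ \oplus \bZ/2\bZ$ and $2A$ is cyclic of odd order $m$, the structure theorem gives a canonical internal direct sum $A = A[2] \oplus 2A$, so $A \cong (\bZ/2\bZ)^2 \oplus C_m$ with $C_m = 2A$ and $A[2] \cong (\bZ/2\bZ)^2$. I would suppose for contradiction that $H = \{\pm h_1, \pm h_2, \pm h_3, \pm h_4\}$ is a nonseparating subset of $A$ and then exhibit a cyclic $B \subseteq A$ with $A/B$ cyclic whose coset numbers violate $c_2 = c_3$.

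First I would apply the nonseparating condition to the three subgroups $B = B_2 \oplus C_m$, where $B_2$ ranges over the three order-$2$ subgroups of $(\bZ/2\bZ)^2$. Each quotient $A/B$ has order $2$, and since $2h \in 2A \subseteq B$ for every $h \in A$, each inverse pair of $H$ lies in a single coset of $B$. Let $\pi \co A \to (\bZ/2\bZ)^2$ be the projection along $C_m$ and let $n_i$ ($i \in \{0,1,2,3\}$) denote the number of pairs in $H$ whose $\pi$-image is the $i$-th of the four elements $e_0=0, e_1, e_2, e_3 = e_1+e_2$ of $(\bZ/2\bZ)^2$. The requirement $c_2 = c_3$ for these three subgroups forbids a $2$-$2$ split of $H$ across the cosets of $B$, which forces $n_i + n_j \neq 2$ for every $i \neq j$. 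A short enumeration of partitions of $4$ then shows that, up to permutation of coordinates, the only such vectors are $(4,0,0,0)$ and $(3,1,0,0)$. Applying Lemma~\ref{lemma:translate} with a translation by an element of $A[2]$, I may further assume the maximum multiplicity occurs at $e_0 = 0$.

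Next, to rule out both remaining patterns I would use the cyclic subgroups $B = \langle e_j \rangle$ of order $2$, for $j \in \{1,2,3\}$. For each such $B$, $A/B \cong \bZ/2\bZ \oplus C_m \cong \bZ/2m\bZ$ is cyclic of order $2m$; I take the generator whose image is $(1,g)$ with $g$ a generator of $C_m$. By the Chinese remainder theorem, the coset number of $h$ depends only on its image $(\bar h, z) \in \bZ/2\bZ \oplus C_m$. A direct computation, using that $m$ is odd, shows that when $\bar h = 0$ the coset number is even, and that as the inverse pair $\{\pm z\}$ varies over the $(m+1)/2$ pairs in $C_m$ the resulting coset numbers bijectively exhaust $\{0, 2, \ldots, m-1\}$; when $\bar h = 1$ the coset number is odd. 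In case $(4,0,0,0)$, all four pairs of $H$ lie in $C_m$, so any choice of $j$ yields four distinct even coset numbers; in case $(3,1,0,0)$, choosing $j$ different from the position $e_i$ of the isolated entry makes the fourth pair contribute an odd coset number while the other three contribute three distinct even ones. In both cases all four coset numbers are distinct, so sorted they strictly increase and $c_2 < c_3$, contradicting nonseparating.

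The main technical point will be the bijection claim: that the $(m+1)/2$ inverse pairs of $C_m$ realize the $(m+1)/2$ even coset numbers in $\{0, 2, \ldots, m-1\}$ each exactly once. This is where the parity of $m$ plays its role, through the fact that $\ell$ and $\ell + m$ have opposite parities. Some subcases are vacuous for small $m$: pattern $(4,0,0,0)$ requires at least four inverse pairs in $C_m$, forcing $m \geq 7$, and $(3,1,0,0)$ requires three, forcing $m \geq 5$. The remaining boundary case is $m = 1$, where $H$ must equal all of $A = (\bZ/2\bZ)^2$ and so has multiplicity vector $(1,1,1,1)$—already excluded by the first step.
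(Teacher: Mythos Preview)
Your proof is correct and follows essentially the same route as the paper's: both decompose $A$ as $(\bZ/2\bZ)^2 \oplus C_m$, use the three index-$2$ cyclic subgroups to force the multiplicity vector of $\pi(H)$ into one of the concentrated patterns, normalize by a translation (Lemma~\ref{lemma:translate}), and then finish with an order-$2$ subgroup $B$. The only difference is in the endgame: where you compute explicitly via the parity/bijection argument that the four coset numbers are distinct, the paper observes in one line that after translation $H$ lies in a cyclic subgroup $C$ of index $2$, takes $B$ of order $2$ with $B\cap C=0$ so that $A/B\cong C$, and notes that four disjoint inverse pairs in a cyclic group automatically have four distinct coset numbers. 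Your explicit calculation proves exactly this fact, so the arguments coincide; the paper's phrasing is just more compressed.
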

  \begin{proof} We proceed by contradiction.  Suppose that $A$
contains a nonseparating subset $H=\{\pm h_1,\pm h_2,\pm h_3,\pm
h_4\}$.  The assumptions imply that $A\cong 2A\oplus (A/2A)$.  Every
subgroup of order 2 in $A$ is a cyclic subgroup $B$ such that $A/B$ is
cyclic, and every subgroup of order 2 in $A$ is contained in a cyclic
subgroup $B$ such that $A/B$ has order 2, and so is cyclic.

We consider the restriction to $H$ of the canonical group homomorphism
from $A$ to $A/2A$.  Each of the inverse pairs $\{\pm h_1\}$, $\{\pm
h_2\}$, $\{\pm h_3\}$, $\{\pm h_4\}$ determines an element of $A/2A$.
There are five possible forms for this map: 1) its image contains all
four elements of $A/2A$; 2) its image contains three elements of
$A/2A$; 3) its image contains two elements of $A/2A$, each the image
of two of the inverse pairs; 4) its image contains two elements of
$A/2A$, one the image of one inverse pair and the other the image of
three inverse pairs; 5) its image consists of one element of $A/2A$.

In this paragraph we assume that the map from $H$ to $A/2A$ has forms
either 1, 2 or 3.  A case analysis shows that there exists a subgroup
of order 2 in $A/2A$ which contains the image of exactly two of the
inverse pairs in $H$.  The inverse image in $A$ of this subgroup of
order 2 is a cyclic subgroup $B$ of $A$ such that $A/B$ has order 2,
hence is cyclic, and $B$ contains exactly two of the inverse pairs of
$H$.  Hence the coset numbers for $H$ relative to $B$ and the
generator of $A/B$ are $c_1=c_2=0$ and $c_3=c_4=1$.  This contradicts
the assumption that $H$ is nonseparating.

In this paragraph we assume that the map from $H$ to $A/2A$ has forms
either 4 or 5.  By translating $H$ by an element of order 2 if
necessary as in Lemma~\ref{lemma:translate}, we may assume that the
image of $H$ in $A/2A$ is contained in a subgroup of order 2.  The
inverse image in $A$ of this subgroup of order 2 is a cyclic subgroup
$C$ of $A$ such that $A/C$ has order 2, and $C$ contains $H$.  Let $B$
be a subgroup of order 2 in $A$ not contained in $C$.  Then $B$ is a
cyclic subgroup of $A$ such that $A/B\cong C$ is cyclic, and the coset
numbers for $H$ relative to $B$ and any generator of $C$ are distinct
because $H$ is a subset of $C$.  This contradiction completes the
proof of Theorem~\ref{thm:notcyclic}.

\end{proof}

\begin{thm}\label{thm:notsqfree} There does not exist a NET map with
degree an odd squarefree integer and constant Teichm\"{u}ller map.
\end{thm}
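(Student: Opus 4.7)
The plan is to combine Theorem~\ref{thm:algcformn} with Theorem~\ref{thm:notcyclic}. Suppose for contradiction that $f$ is a NET map whose degree $d$ is an odd squarefree integer and whose Teichm\"{u}ller map is constant. By Theorem~\ref{thm:algcformn}, the set $p_1^{-1}(P_2)$ is a nonseparating subset of $A=\zL_2/2\zL_1$. The strategy is to verify that $A$ satisfies the hypotheses of Theorem~\ref{thm:notcyclic}, which will contradict the existence of such a nonseparating subset.

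First I would compute the relevant orders. Since $\deg(f)=|\zL_2/\zL_1|=d$ and $|\zL_1/2\zL_1|=4$, we have $|A|=4d$. The subgroup $2A$ is the image of $2\zL_2$ under the quotient map $\zL_2\to A$, so $2A\cong 2\zL_2/2\zL_1\cong \zL_2/\zL_1$ has order $d$. Dividing gives $A/2A\cong \zL_2/2\zL_2\cong (\bZ/2\bZ)^2$, as required by the hypothesis of Theorem~\ref{thm:notcyclic}.

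Next I would show $2A$ is cyclic. Since $\zL_2$ has rank $2$, the quotient $A$ is generated by two elements, and so is $2A$; by the structure theorem $2A\cong \bZ/d_1\bZ\oplus \bZ/d_2\bZ$ with $d_1\mid d_2$ and $d_1 d_2=d$. Because $d$ is squarefree, $d_1\mid d_2$ forces $d_1=1$, so $2A$ is cyclic of odd order $d$. (This step could alternatively be argued directly: an abelian group of squarefree order is a product of its Sylow subgroups, each of prime order hence cyclic, and the Chinese Remainder Theorem assembles them into a single cyclic group.)

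With $A/2A\cong (\bZ/2\bZ)^2$ and $2A$ cyclic of odd order, Theorem~\ref{thm:notcyclic} applies and says that $A$ contains no nonseparating subset, contradicting the consequence of Theorem~\ref{thm:algcformn} derived above. This completes the argument. There is no real obstacle here; the proof is essentially bookkeeping to check that the algebraic hypotheses of Theorem~\ref{thm:notcyclic} are met, and the only content beyond that is the elementary fact that a finitely generated abelian group of squarefree order is cyclic.
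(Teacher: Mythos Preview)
Your proposal is correct and follows essentially the same route as the paper: verify $A/2A\cong(\bZ/2\bZ)^2$ and $|2A|=d$, observe that a finite abelian group of squarefree order is cyclic so $2A$ is cyclic of odd order, then invoke Theorem~\ref{thm:notcyclic} to contradict Theorem~\ref{thm:algcformn}. Your write-up supplies slightly more detail (the explicit identification $2A\cong 2\zL_2/2\zL_1\cong \zL_2/\zL_1$ and the structure-theorem argument for cyclicity), but the logic is identical to the paper's.
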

  \begin{proof} Suppose that $f\co S^2\to S^2$ is a NET map, and
suppose that the degree $d$ of $f$ is an odd squarefree integer.  We
maintain the notation of Section~\ref{sec:defns}.  Let $A=\zL_2/2
\zL_1$.  Then $A/2A\cong \bZ/2 \bZ\oplus \bZ/2 \bZ$, and since
$\left|A\right|=4d$, it follows that $\left|2A\right|=d$.  Since every
finite Abelian group with squarefree order is cyclic, $2A$ is cyclic.
Theorem~\ref{thm:notcyclic} now implies that $A$ does not contain a
nonseparating subset.  Theorem~\ref{thm:algcformn} now implies that
the Teichm\"{u}ller map of $f$ is not constant.  This proves
Theorem~\ref{thm:notsqfree}.

\end{proof}

It should be true that there exists a NET map with degree $d$ and
constant Teichm\"{u}ller map if and only if $d>2$ and $d$ is divisible
by either 2 or 9.  See Saenz Maldonado's thesis \cite{SM} for further
progress in this direction.

\end{document}